\renewcommand{\k}{\mathbf{k}}
\renewcommand{\j}{{\mathbf{j}}}
\newcommand{\s}{s}
\newcommand{\eps}{{\varepsilon}}
\newcommand{\R}{\mathbb{R}}
\newcommand{\N}{\mathbb{N}}
\newcommand{\cuad}{{\sqcap\kern-.68em\sqcup}}
\newcommand{\norm}[1]{\|#1\|}
\numberwithin{equation}{section}
\newtheorem{theorem}{Theorem}[section]
\newtheorem{definition}[theorem]{Definition}
\newtheorem{proposition}[theorem]{Proposition}
\newtheorem{lemma}[theorem]{Lemma}
\newtheorem{corollary}[theorem]{Corollary}
\newtheorem{remark}[theorem]{Remark}
\newcommand{\bremark}{\begin{remark} \em}
\newcommand{\eremark}{\end{remark} }
\newcommand{\cE}{{\mathcal E}}
\newcommand{\cF}{{\mathcal F}}
\newcommand{\cH}{{\mathbb H}}
\newcommand{\cM}{{\mathcal M}}
\newcommand{\cS}{{\mathcal S}}
\newcommand{\cV}{{\mathcal V}}
\newcommand{\cW}{{\mathcal W}}
\newcommand{\loglap}{L_{\text{\tiny $\Delta \,$}}\!}
\DeclareMathOperator{\dist}{\rm dist}
\DeclareMathOperator{\supp}{\rm supp \,}
\begin{document}
\begin{center}{\bf  \large  The Dirichlet Problem for the Logarithmic Laplacian}\medskip
%%%%%%%%%%%%%%%%%%%%%%%%%%%%%%%%%%%%%%%%%%%%%%%%%%%%%%%%%%%%%%%%%%%%%%
%%%%%%%%%%%%%%%%%%%%%%%%%%%%%%%%%%%%%%%%%%%%%%%%%%%%%%%%%%%%%%%%%%%%%%

 \bigskip

 {\small  Huyuan Chen\footnote{chenhuyuan@yeah.net} \qquad Tobias Weth\footnote{weth@math.uni-frankurt.de}
}
 \bigskip

{\small  $ ^1$Department of Mathematics, Jiangxi Normal University, Nanchang,\\
Jiangxi 330022, PR China\\[3mm]

$ ^2$Goethe-Universit\"{a}t Frankfurt, Institut f\"{u}r Mathematik, Robert-Mayer-Str. 10,\\
D-60629 Frankfurt, Germany
 } \\[6mm]

\begin{abstract}
In this paper,  we study the logarithmic Laplacian operator  $\loglap$, which is a singular integral operator with symbol $2\log |\zeta|$.
We show that this operator has the integral representation
$$\loglap  u(x) =
c_{N} \int_{\R^N  } \frac{ u(x)1_{B_1(x)}(y)-u(y)}{|x-y|^{N} } dy + \rho_N u(x)
$$
with $c_N = \pi^{- \frac{N}{2}}  \Gamma(\frac{N}{2})$ and $\rho_N=2 \log 2 + \psi(\frac{N}{2}) -\gamma$, where $\Gamma$ is the Gamma function, $\psi = \frac{\Gamma'}{\Gamma}$ is the Digamma function and $\gamma= -\Gamma'(1)$ is the Euler Mascheroni constant. This operator arises as formal derivative $\partial_\s \Big|_{\s=0} (-\Delta)^\s$ of fractional Laplacians at $\s= 0$. We develop the functional analytic framework for Dirichlet problems involving the logarithmic Laplacian on bounded domains and use it to characterize the asymptotics of principal Dirichlet eigenvalues and eigenfunctions of $(-\Delta)^\s$ as $\s \to 0$. As a byproduct, we then derive a Faber-Krahn type inequality for the principal Dirichlet eigenvalue of $\loglap$. Using this inequality, we also establish conditions on domains giving rise to the maximum principle in weak and strong forms. This allows us to also derive regularity up to the boundary of solutions to corresponding Poisson problems. \end{abstract}

\end{center}
%\tableofcontents \vspace{1mm}
 \noindent {\small {\bf Keywords}:    Fractional Laplacian; Logarithmic Symbol; Maximum Principle; Boundary Decay. }\vspace{1mm}

\noindent {\small {\bf MSC2010}:   35R11, 35B50, 35B51, 35D30. }

\vspace{1mm}
%%%%%%%%%%%%%%%%%%%%%%%%%%%%%%%%%%%%%%%%%%%%%%%%%%%%%%%%%%%%%%%%%%%%%%%%%%%%%%%%%%%%%%%%%%%%%%%%%%%%%%%%%%%%%%%%%%%%%%%%%%
%%%%%%%%%%%%%%%%%%%%%%%%%%%%%%%%%%%%%%%%%%%%%%%%%%%%%%%%%%%%%%%%%%%%%%%%%%%%%%%%%%%%%%%%%%%%%%%%%%%%%%%%%%%%%%%%%%%%%%%%%%

\setcounter{equation}{0}
\section{Introduction and main results}

In recent years, there has been a renewed and increasing
interest in the study of boundary value problems involving linear and nonlinear integro-differential operators.
This growing interest is fueled both by important applications and seminal advances in the
understanding of nonlocal phenomena from a PDE point of view, see e.g.
\cite{CS0,CS1,CS2,CFQ,EGE,FQ2,RS,RS1,S} and the references therein.
Among nonlocal operators of positive differential order, fractional powers of the Laplacian are receiving the most attention.
Recall that, for $\s\in(0,1)$, the fractional Laplacian of a function $u \in C^\infty_c(\R^N)$ is defined by
\begin{equation}
  \label{eq:Fourier-representation}
\mathcal{F}((-\Delta)^\s u)(\xi) = |\xi|^{2\s}\widehat u (\xi)\qquad \text{for all $\xi \in \R^N$},
\end{equation}
where both $\mathcal{F}$ and $\widehat \cdot$ denote the Fourier transform. Equivalently, we can write $(-\Delta)^\s$ as a singular integral operator, i.e.
 \begin{equation}\label{fl 1}
 (-\Delta)^\s  u(x)=c_{N,\s} \lim_{\epsilon\to0^+} \int_{\R^N\setminus B_\epsilon(x) }\frac{u(x)-
u(y)}{|x-y|^{N+2\s}}  dy = c_{N,\s} \lim_{\epsilon\to0^+} \int_{\R^N\setminus B_\epsilon(0) }\frac{u(x)-
u(x+z)}{|z|^{N+2\s}}  dz,
\end{equation}
where $c_{N,\s}=2^{2\s}\pi^{-\frac N2}\s\frac{\Gamma(\frac{N+2\s}2)}{\Gamma(1-\s)}$ and $\Gamma$  is the  Gamma function, see e.g. \cite{RS1}.

It is well known that the fractional Laplacian has the following limiting
properties when $\s$ approaches the values zero and $1$:
\begin{equation}
  \label{eq:limit-behaviour}
\lim_{\s\to1^-}(-\Delta)^\s u(x)=-\Delta u(x)
\quad \text{and}\quad \lim_{\s\to0^+}  (-\Delta)^\s  u(x) = u(x)\qquad \text{for $u\in C^2_c(\R^N)$,}
\end{equation}
see e.g. \cite{EGE}. On the other hand, one might guess that the function $z \mapsto |z|^{-N}$ also appears in
a suitably renormalized limit of the integral kernels $z \mapsto \frac{c_{N,\s}}{|z|^{N+2\s}}$ appearing in (\ref{fl 1}) as $\s \to 0$. However, due to lack of integrability, the zero order kernel $|\cdot|^{-N}$ has to be cut off at infinity in order to give rise to a singular integral operator defined analogously as in (\ref{fl 1}). Integral operators given by kernels with a singularity of the order $-N$ have received growing interest recently, as they give rise to interesting limiting regularity properties and Harnack inequalities without scaling invariance, see e.g. \cite{KM}.

As we shall see in the present paper, operators of such limiting order appear also within a first order expansion of the second limit in (\ref{eq:limit-behaviour}). More precisely, we find that for $u \in C^2_c(\R^N)$ and  $x \in \R^N$,
$$
(-\Delta)^\s  u(x) = u(x) + s \loglap u (x) + o(\s) \quad \text{as\ \, $\s\to0^+$,}
$$
where, formally, the operator $\loglap:= \frac{d}{d\s}\Big|_{\s=0} (-\Delta)^\s$ is given as a {\em logarithmic Laplacian}. Indeed, this operator has a logarithmic symbol, since, at least formally, (\ref{eq:Fourier-representation}) gives rise to the representation
$$\mathcal{F}(\loglap u)(\xi) = (2 \log |\xi|)\, \widehat u (\xi),\qquad \forall\,\xi \in \R^N.
$$
The purpose of the present paper is to study qualitative and functional analytic properties of the extremal nonlocal operator  $\loglap $  and related Dirichlet problems. An important motivation for this study is the fact that, as we shall observe, the operator $\loglap$ appears in the asymptotic description of the first Dirichlet eigenvalue of $(-\Delta)^s$ in bounded domains for small $s$ and of corresponding eigenfunctions.

In the following, we present the main results of the present paper. Our first result provides an integral representation of the logarithmic Laplacian.

\begin{theorem}\label{teo-representation}
Let $u \in C^\beta_c(\R^N)$ for some $\beta>0$. Then we
have
\begin{align}
 \loglap  u(x)&:= \frac{d}{d\s}\Big|_{\s=0} [(-\Delta)^\s u](x)= c_{N} \int_{\R^N  } \frac{ u(x)1_{B_1(x)}(y)-u(y)}{|x-y|^{N} } dy + \rho_N u(x)   \label{representation-main}\\
   &=  c_{N}  \int_{B_1(x)  } \frac{ u(x) -u(y)}{|x-y|^{N} } dy-c_N\int_{\R^N\setminus B_1(x)}\frac{u(y)}{|x-y|^{N} } dy + \rho_N u(x) \quad \text{for \,$x \in \R^N$,}\nonumber
\end{align}
where
\begin{equation}
  \label{eq:def-c-N}
c_N:= \pi^{-  N/2}  \Gamma( N/2) = \frac{2}{|S^{N-1}|}, \qquad \rho_N:=2 \log 2 + \psi(\frac{N}{2}) -\gamma
\end{equation}
and $\gamma= -\Gamma'(1)$ is the Euler Mascheroni constant. Here $\psi = \frac{\Gamma'}{\Gamma}$ is the Digamma function. Moreover,
\begin{enumerate}
\item[(i)] for $1 < p \le \infty$, we have $\loglap  u \in L^p(\R^N)$ and $\frac{(-\Delta)^\s u- u}{\s} \to \loglap  u$ in $L^p(\R^N)$ as $\s \to 0^+$;
\item[(ii)] $\mathcal{F}(\loglap u)(\xi) = (2 \log |\xi|)\, \widehat u (\xi)$ % = (\log |\xi|^2)\, \widehat u (\xi)$
 \, for a.e. $\xi \in \R^N$.
\end{enumerate}
\end{theorem}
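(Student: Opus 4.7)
The plan is to compute the derivative directly from the singular integral definition \eqref{fl 1}. First I would split
\begin{align*}
(-\Delta)^s u(x) &= \underbrace{c_{N,s}\!\int_{B_1(x)} \frac{u(x)-u(y)}{|x-y|^{N+2s}}\,dy}_{=:\,I_1(s,x)} \;-\; \underbrace{c_{N,s}\!\int_{\R^N\setminus B_1(x)} \frac{u(y)}{|x-y|^{N+2s}}\,dy}_{=:\,-I_2(s,x)}\\
&\quad+\; \underbrace{c_{N,s}\,u(x)\!\int_{\R^N\setminus B_1(x)} \frac{dy}{|x-y|^{N+2s}}}_{=:\,I_3(s,x)},
\end{align*}
where the split at radius $1$ is chosen so that the logarithm $\log|x-y|$ appearing in the expansion of $|x-y|^{-N-2s}$ will vanish at the interface. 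The third integral is explicit: $\int_{\R^N\setminus B_1(x)} |x-y|^{-N-2s} dy = |S^{N-1}|/(2s)$, which cancels the prefactor $s$ hidden in $c_{N,s}$.

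The key algebraic input is the factorization $c_{N,s} = s A(s)$ with $A(s) = 2^{2s}\pi^{-N/2}\Gamma(\tfrac{N+2s}{2})/\Gamma(1-s)$. One checks $A(0)=c_N$ and, by logarithmic differentiation,
\[
\frac{A'(0)}{A(0)} = 2\log 2 + \psi(N/2) + \psi(1) = \rho_N,
\]
using $\psi(1)=-\gamma$. Since $c_N|S^{N-1}|=2$, this gives $I_3(s,x)= \tfrac{|S^{N-1}|A(s)}{2}u(x) = u(x) + s\rho_N u(x) + o(s)$. For $I_1$ and $I_2$, I divide by $s$ and pass the limit inside: Hölder continuity gives $|u(x)-u(y)|\leq [u]_\beta |x-y|^\beta$, so the integrand of $I_1/s$ is dominated, uniformly for small $s$, by $C|x-y|^{\beta-N-2s_0}$, integrable on $B_1(x)$; for $I_2/s$ the kernel satisfies $|x-y|^{-N-2s}\leq 1$ on $B_1(x)^c$ and $u\in L^1$, giving an integrable dominant. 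Dominated convergence yields
\[
\frac{I_1(s,x)}{s}\to c_N\!\int_{B_1(x)}\!\!\frac{u(x)-u(y)}{|x-y|^N}\,dy,\qquad \frac{I_2(s,x)}{s}\to -c_N\!\int_{\R^N\setminus B_1(x)}\!\!\frac{u(y)}{|x-y|^N}\,dy.
\]
Summing the three limits produces \eqref{representation-main} and simultaneously shows pointwise convergence of $[(-\Delta)^s u - u]/s$ to $\loglap u$.

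To upgrade pointwise to $L^p$ convergence in (i), I need an $s$-independent dominant in $L^p(\R^N)$. Choose $R$ with $\supp u\subset B_R$; the estimates above show that for $x$ in the compact set $\overline{B_{R+2}}$, the quotient $|[(-\Delta)^s u - u]/s|$ is uniformly bounded in $s\in(0,s_0]$. For $|x|>R+2$, the ball $B_1(x)$ misses $\supp u$, so $I_1=I_3=0$ and the quotient reduces to $I_2(s,x)/s = -A(s)\int u(y)|x-y|^{-N-2s}dy$, bounded in absolute value by $C\|u\|_{L^1}|x|^{-N}$. Since $|x|^{-N}\in L^p(\{|x|>R+2\})$ precisely for $p>1$ and is bounded (so $\in L^\infty$), dominated convergence delivers the $L^p$ statement in the full range $1<p\le \infty$. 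I expect this uniform control at infinity, and the compatibility of the Hölder dominant across the regimes $|x-y|<1$ and $|x-y|\geq 1$, to be the main technical hurdle.

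Finally, part (ii) is a soft consequence of (i): from $\widehat{(-\Delta)^s u}(\xi)=|\xi|^{2s}\widehat u(\xi)$, the Fourier transform of $[(-\Delta)^s u - u]/s$ equals $\tfrac{|\xi|^{2s}-1}{s}\widehat u(\xi)$, which tends pointwise to $2\log|\xi|\,\widehat u(\xi)$. Applying (i) with $p=2$ and Plancherel, the left side converges in $L^2$ to $\widehat{\loglap u}$; by uniqueness of limits in measure, $\widehat{\loglap u}(\xi) = 2\log|\xi|\,\widehat u(\xi)$ a.e.
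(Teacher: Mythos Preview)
Your proof is correct and follows the same core strategy as the paper: factorize $c_{N,s}=sA(s)$, compute $A(0)=c_N$ and $A'(0)/A(0)=\rho_N$ via logarithmic differentiation, then use dominated convergence both pointwise and in $L^p$, and finally deduce (ii) from the $p=2$ case via Plancherel. The one noteworthy difference is your decomposition: you split the singular integral at radius~$1$ (the radius built into the target formula), whereas the paper splits at a large auxiliary radius $R$ with $\supp u\subset B_{R/4}$, obtains $L^p$ error bounds of size $O(R^{N/p-N})$, and only afterwards lets $R\to\infty$; your choice is more direct since the limit already lands on \eqref{representation-main} without any conversion step, at the cost of having to handle the three pieces $I_1,I_2,I_3$ separately in the region near $\partial(\supp u)$ rather than collapsing everything to a single integral there.
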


\begin{remark}{\rm 
We note that
$\rho_N= -2 \gamma + \sum \limits_{k=1}^{(N-1)/2} \frac{2}{2k-1}$ for $N$ odd and
$\rho_N = 2 \bigl(\log 2 -\gamma\bigr)+ \sum \limits_{k=1}^{(N-2)/2}\frac{1}{k}$ for $N$ even. We also note that, in distributional sense, the integral representiation (\ref{representation-main}) coincides, up to a constant, with the convolution with the inverse Fourier transform of the symbol $2 \log |\cdot|$, see e.g. \cite[Chapter II.3]{gelfand-shilov}.
}
\end{remark}

The formula (\ref{representation-main}) allows to define $\loglap u$ for a fairly large class of functions $u$. For this we recall that, for $s \in \R$, the space $L^1_s(\R^N)$ denotes the space of locally integrable functions $u: \R^N \to \R$ such that
$$
\|u\|_{L^1_s}:= \int_{\R^N} \frac{|u(x)|}{(1+|x|)^{N+2s}}\,dx < +\infty.
$$

\begin{proposition}
\label{dini-continuity-well-defined-theorem}
Let $u \in L^1_0(\R^N)$. If $u$ is Dini continuous at some $x \in \R^N$, then $[\loglap u](x)$ is well-defined by the formula (\ref{representation-main}). Moreover, if $u$ is uniformly Dini continuous in some open subset $\Omega \subset \R^N$, then $\loglap u$ is continuous in $\Omega$.
\end{proposition}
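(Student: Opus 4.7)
The plan is to reorganize the singular integral in (\ref{representation-main}) by splitting off a ball $B_\delta(x)$ of small fixed radius $\delta \in (0,1)$. Using $\int_{B_1(x) \setminus B_\delta(x)} |x-y|^{-N}\,dy = |S^{N-1}| \log(1/\delta)$ together with $c_N |S^{N-1}| = 2$, the defining formula becomes equivalent to
\begin{equation*}
\loglap u(x) = c_N \int_{B_\delta(x)} \frac{u(x)-u(y)}{|x-y|^N}\,dy - c_N \int_{\R^N \setminus B_\delta(x)} \frac{u(y)}{|x-y|^N}\,dy + \bigl(2\log(1/\delta) + \rho_N\bigr) u(x).
\end{equation*}
To verify this representation is well-defined at a point $x$ where $u$ is Dini continuous with modulus $\omega_x$, I would bound the near-field integral by $|S^{N-1}| \int_0^\delta \omega_x(r)/r\,dr$, which is finite by the Dini condition. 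For the far-field integral, a direct estimate yields $|x-y|^{-N} \le C(x,\delta)(1+|y|)^{-N}$ on $\{|x-y| \ge \delta\}$, so integrability follows from $u \in L^1_0(\R^N)$.

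For the continuity claim, fix $x_0 \in \Omega$ and choose $\delta > 0$ with $\overline{B_{2\delta}(x_0)} \subset \Omega$. The term $(2\log(1/\delta) + \rho_N)\,u(x)$ is continuous in $x$ because uniform Dini continuity implies continuity of $u$ on $\Omega$. For the near-field, the substitution $z = y-x$ converts it to $c_N \int_{B_\delta(0)} (u(x)-u(x+z))|z|^{-N}\,dz$ over a fixed domain; whenever $x \in B_\delta(x_0)$ and $|z| < \delta$ one has $x, x+z \in B_{2\delta}(x_0) \subset \Omega$, so the common Dini modulus $\omega$ gives the domination $|u(x)-u(x+z)| \le \omega(|z|)$, with $\omega(|z|)/|z|^N \in L^1(B_\delta(0))$. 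Pointwise convergence as $x \to x_0$ is provided by continuity of $u$, and dominated convergence closes the argument.

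The main technical point is the far-field term $F(x) := -c_N \int_{\R^N \setminus B_\delta(x)} u(y)|x-y|^{-N}\,dy$, whose domain of integration moves with $x$. I would handle this by writing
\begin{equation*}
F(x) = -c_N \!\int_{\R^N \setminus B_\delta(x_0)}\! \frac{u(y)}{|x-y|^N}\,dy + c_N\! \int_{B_\delta(x_0) \setminus B_\delta(x)}\! \frac{u(y)}{|x-y|^N}\,dy - c_N\! \int_{B_\delta(x) \setminus B_\delta(x_0)}\! \frac{u(y)}{|x-y|^N}\,dy.
\end{equation*}
For $|x-x_0| < \delta/2$ the first integral admits the uniform domination $|u(y)|/|x-y|^N \le C(x_0,\delta)|u(y)|/(1+|y|)^N$, belonging to $L^1(\R^N)$ by $u \in L^1_0$, so dominated convergence handles it. On each symmetric-difference piece one checks that $|x-y| \ge \delta/2$ while $u$ is bounded on $\overline{B_{2\delta}(x_0)}$; since $|B_\delta(x_0) \triangle B_\delta(x)| = O(|x-x_0|)$, these two contributions vanish as $x \to x_0$. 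The sole real obstacle is precisely this moving-domain bookkeeping, as both the Dini estimate (near-field) and the $L^1_0$ weight (far-field) must be invoked with a uniform domination across the splitting.
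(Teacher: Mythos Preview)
Your argument is correct. Note only a harmless sign slip in the symmetric-difference decomposition of $F(x)$: the correct identity is
\[
1_{\R^N \setminus B_\delta(x)} = 1_{\R^N \setminus B_\delta(x_0)} + 1_{B_\delta(x_0)\setminus B_\delta(x)} - 1_{B_\delta(x)\setminus B_\delta(x_0)},
\]
so the signs of the two correction terms in your displayed formula for $F(x)$ should be swapped. Since you show both terms tend to zero anyway, this does not affect the proof.

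Your route differs from the paper's. The paper keeps the original radius-$1$ split $\loglap u = c_N f_1 + f_2$ with $f_2 = -\j * u + \rho_N u$, proves continuity of $f_2$ via a separate lemma on convolutions with kernels $\le C(1+|\cdot|)^{-N}$, and then treats $f_1(x)=\int_{B_1}\frac{u(x)-u(x+z)}{|z|^N}\,dz$ by a further scale split at $\eps_k = 2^{-2^k}$; the annulus $B_1\setminus B_{\eps_k}$ contributes a term $\omega(\eps_k)\log(1/\eps_k)$, and one must first check that this vanishes along the sparse sequence $\eps_k$ (a consequence of the Dini condition, but not immediate). You instead rescale the whole decomposition to radius $\delta$ adapted to $x_0$, so the near-field sits entirely inside $\Omega$ and a single dominated-convergence step with majorant $\omega(|z|)/|z|^N$ suffices; the cost is the moving-domain bookkeeping for the far-field, which you dispatch cleanly with the symmetric-difference estimate. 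Your approach is more self-contained for this statement, while the paper's approach isolates the continuity of $u \mapsto \j * u$ as a reusable lemma and avoids tracking moving domains.
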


For the definition of (uniform) Dini continuity, see Section~\ref{sec:basic-properties} below. Our next aim is to study the eigenvalue problem
\begin{equation}\label{eq 3.1-main-results}
\left\{ \arraycolsep=1pt
\begin{array}{lll}
 \loglap u=\lambda  u\quad \  {\rm in}\quad   \Omega,\\[2mm]
 \phantom{ \loglap   }
  u=0\quad \ {\rm{in}}\  \quad \R^N\setminus \Omega
\end{array}
\right.
\end{equation}
in a bounded domain $\Omega$. We consider corresponding eigenfunctions in weak sense. The corresponding functional analytic framework is given as follows.
Let $\cH(\Omega)$ denote the space of all measurable functions $u:\R^N\to \R$ with $u \equiv 0$ on $\R^N \setminus \Omega$ and
$$
\int \!\!\!\! \int_{\stackrel{x,y \in \R^N}{\text{\tiny $|x-y|\!\le\! 1$}}} \frac{(u(x)-u(y))^2}{|x-y|^N} dx dy <+\infty.
$$
We shall see that $\cH(\Omega)$ is a Hilbert space with inner product
$$
\mathcal{E}(u,w)=\frac{c_N}2 \int \!\!\int_{\stackrel{x,y \in \R^N}{\text{\tiny $|x-y|\!\le\! 1$}}} \frac{(u(x)-u(y))(w(x)-w(y))}{|x-y|^N}
dx dy
$$
and the induced norm $\norm{u}_{\cH(\Omega)}=\sqrt{\mathcal{E}(u,u)}$, where $c_N$ is given in (\ref{eq:def-c-N}). By \cite[Theorem 2.1]{CP}, the embedding $\cH(\Omega) \hookrightarrow L^2(\Omega)$ is compact. Here and in the following, we identify $L^2(\Omega)$ with the space of functions in $L^2(\R^N)$ with $u \equiv 0$ on $\R^N \setminus \Omega$. Moreover, the quadratic form associated with $\loglap$ is well-defined on $\cH(\Omega)$ by
$$
\cE_L: \cH(\Omega) \times \cH(\Omega) \to \R, \quad \cE_L(u,w)= \mathcal{E}(u,w) - c_N
\int \!\!\!\! \int_{\stackrel{x,y \in \R^N}{\text{\tiny $|x-y|\!\ge\! 1$}}} \frac{u(x)w(y)}{|x-y|^N} dx dy + \rho_N \int_{\R^N}  u w\,dx.
$$
A function $u \in \cH(\Omega)$ will then be called an eigenfunction of (\ref{eq 3.1-main-results}) corresponding to the eigenvalue
$\lambda$ if
$$
\cE_L(u,\phi) = \lambda \int_{\Omega}u\phi\,dx \qquad \text{for all $\phi \in \cH(\Omega)$.}
$$

\begin{theorem}\label{pr eigen 1}
Let $\Omega$ be a bounded domain  in $\R^N$.  Then problem (\ref{eq 3.1-main-results}) admits a sequence of eigenvalues
$$
\lambda_1^L(\Omega)<\lambda_2^L(\Omega)\le \cdots\le \lambda_k^L(\Omega)\le \lambda_{k+1}^L(\Omega)\le \cdots
$$
and corresponding eigenfunctions $\xi_k$, $k \in \N$ such that the following holds:
\begin{enumerate}
\item[(i)] $\lambda_{k}^L(\Omega)=\min \{\cE_L(u,u) \::\:  u\in \cH_k(\Omega)\::\: \norm{u}_{L^2(\Omega)}=1\}$, where
$$
\cH_1(\Omega):= \cH(\Omega)\quad \text{and}\quad  \cH_k(\Omega):=\{u\in\cH(\Omega)\::\: \text{$\int_{\Omega} u \xi_i \,dx =0$ for $i=1,\dots k-1$}\}\quad \text{for $k>1$.}
$$
\item[(ii)] $\{\xi_k\::\: k \in \N\}$ is an orthonormal basis of $L^2(\Omega)$.
\item[(iii)] $\xi_1$ is strictly positive in $\Omega$. Moreover, $\lambda_1^L(\Omega)$ is simple, i.e., if $u \in \cH(\Omega)$ satisfies (\ref{eq 3.1-main-results}) in weak sense with $\lambda = \lambda_1^L(\Omega)$, then $u=t\xi_1$ for some $t\in\R$.
\item[(iv)] $\lim \limits_{k \to \infty} \lambda_k^L(\Omega)=+\infty$.
\end{enumerate}
\end{theorem}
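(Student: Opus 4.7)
The plan is to recast the eigenvalue problem~(\ref{eq 3.1-main-results}) via the spectral theorem for compact self-adjoint operators, using the Hilbert structure on $\cH(\Omega)$ and the compact embedding $\cH(\Omega)\hookrightarrow L^2(\Omega)$ from \cite[Theorem~2.1]{CP}. First I would check that $\cE_L$ is a continuous symmetric bilinear form on $\cH(\Omega)\times\cH(\Omega)$: for $u,w\in\cH(\Omega)$, both supported in the bounded set $\Omega$, the far-field term is controlled by
\begin{equation*}
\Bigl|\int\!\!\int_{|x-y|\ge 1}\frac{u(x)w(y)}{|x-y|^{N}}\,dx\,dy\Bigr|\le|\Omega|\,\|u\|_{L^2}\|w\|_{L^2}
\end{equation*}
(bound the kernel by $1$ since $|x-y|\ge 1$, then Cauchy--Schwarz), and the $\rho_N$ part is trivially $L^2$-bounded. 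Since $\mathcal{E}(u,u)=\|u\|_{\cH(\Omega)}^2$, this yields a G\aa rding-type inequality $\cE_L(u,u)\ge \|u\|_{\cH(\Omega)}^2-M\|u\|_{L^2}^2$ with $M:=c_N|\Omega|+|\rho_N|$. Setting $C:=M$, the shifted form $(u,v)\mapsto\cE_L(u,v)+C\int_\Omega uv\,dx$ is an inner product on $\cH(\Omega)$ equivalent to $\mathcal E$; the Riesz representation theorem then produces a resolvent $T\colon L^2(\Omega)\to\cH(\Omega)\hookrightarrow L^2(\Omega)$ which is symmetric, positive, and, by compactness of the embedding, compact. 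The classical spectral theorem yields a decreasing sequence $\mu_k\to 0^+$ of eigenvalues of $T$ with an $L^2$-orthonormal basis of eigenfunctions $\xi_k$; setting $\lambda_k^L(\Omega):=\mu_k^{-1}-C$ gives (ii) and (iv) directly, and the Courant--Fischer min-max principle (for $T^{-1}$ restricted to suitable subspaces) yields (i).

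For (iii), the elementary inequalities $(|a|-|b|)^2\le(a-b)^2$ and $|a||b|\ge ab$ force $\cE_L(|u|,|u|)\le\cE_L(u,u)$ while $\||u|\|_{L^2}=\|u\|_{L^2}$, so $|u|$ is a minimizer of the Rayleigh quotient whenever $u$ is. Consequently we may take $\xi_1\ge 0$, and upgrade this to $\xi_1>0$ in $\Omega$ by a strong-maximum-principle argument: granting the interior regularity discussed below so that Proposition~\ref{dini-continuity-well-defined-theorem} makes (\ref{representation-main}) hold pointwise, if $\xi_1$ vanished at some $x_0\in\Omega$ one would get
\begin{equation*}
0=\lambda_1^L(\Omega)\xi_1(x_0)=\loglap\xi_1(x_0)=-c_N\int_{\R^N}\frac{\xi_1(y)}{|x_0-y|^{N}}\,dy<0,
\end{equation*}
a contradiction. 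For simplicity, if $v$ is any first eigenfunction not proportional to $\xi_1$, then since $\int_\Omega\xi_1>0$ we may pick $t\in\R$ with $\int_\Omega(v-t\xi_1)\,dx=0$; the function $f:=v-t\xi_1\not\equiv 0$ is again a first eigenfunction, so $\cE_L(|f|,|f|)=\cE_L(f,f)=\lambda_1^L(\Omega)\|f\|_{L^2}^2$. Tracing the two defining inequalities shows equality forces $f(x)f(y)\ge 0$ for almost every $(x,y)\in\Omega\times\Omega$, whence $f$ has a constant sign a.e.\ on $\Omega$; combined with $\int_\Omega f=0$ this gives $f\equiv 0$, contradicting the choice of $v$.

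The principal obstacle is justifying the pointwise identity $\loglap\xi_1(x_0)=\lambda_1^L(\Omega)\xi_1(x_0)$ used in the strong-maximum-principle step, which presupposes interior Dini (or H\"older) continuity of weak eigenfunctions. The standard route is a two-step regularity bootstrap: first upgrade $\xi_1\in L^2(\Omega)$ to $L^\infty(\Omega)$ via a Moser/Stampacchia iteration adapted to the $|x-y|^{-N}$ kernel, then derive interior continuity using regularity estimates for integral operators with $-N$-order singular kernels as in~\cite{KM}. Once such regularity is in hand, Proposition~\ref{dini-continuity-well-defined-theorem} supplies the needed pointwise identity and closes the argument.
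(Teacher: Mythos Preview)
Your overall framework is correct and takes a genuinely different route from the paper for parts (i), (ii), (iv). The paper proceeds by direct minimization: it shows $\Phi(u)=\cE_L(u,u)$ is weakly lower semicontinuous on $\cH(\Omega)$ (using the compactness of $\cH(\Omega)\hookrightarrow L^2(\Omega)$ and continuity of $u\mapsto\int[\j*u]u\,dx$ on $L^2$), then constructs the $\xi_k$ inductively as constrained minimizers on successive orthogonal complements, and derives (iv) and completeness by the usual contradiction arguments. Your compact-resolvent approach via the G\aa rding inequality and the spectral theorem is the standard functional-analytic alternative; it is equally rigorous and arguably packages (ii) and (iv) more cleanly, at the cost of invoking a slightly heavier abstract machinery. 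Your simplicity argument (choosing $t$ so that $\int_\Omega(v-t\xi_1)=0$ and forcing a sign contradiction) is also a valid variant of the paper's version, which instead observes that some linear combination of two independent first eigenfunctions must change sign.

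The one place where the paper's route is more economical is the strict positivity of $\xi_1$. You correctly flag the obstacle: your pointwise computation at a hypothetical zero $x_0$ requires interior regularity of $\xi_1$ so that (\ref{representation-main}) applies, and you propose a Moser-type $L^\infty$ bound followed by the Kassmann--Mimica estimates \cite{KM}. This would indeed close the gap, but it front-loads Section~\ref{sec:regul-bound-decay}-type arguments into the eigenvalue theorem. The paper avoids this entirely by rewriting the eigenvalue equation as a weak supersolution inequality for the operator $I$ associated with the kernel $\k$, namely $\cE(\xi_1,\phi)-(\rho_N+\lambda_1^L)\int_\Omega\xi_1\phi\ge 0$ for nonnegative $\phi\in C_c^\infty(\Omega)$, and then invoking the weak strong maximum principle of \cite[Theorem~1.1]{JW-preprint}, which delivers $\xi_1>0$ in $\Omega$ directly from the variational inequality without any pointwise regularity.
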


Our next theorem highlights the role of $\lambda_1^L(\Omega)$ and the corresponding eigenfunction $\xi_1$.

\begin{theorem}
\label{sec:funct-analyt-fram-approximation-main}
Let $\Omega$ be a bounded Lipschitz domain  in $\R^N$, and let $\lambda_1^s(\Omega)$ denote the first Dirichlet eigenvalue of $(-\Delta)^s$ on $\Omega$ for $s \in (0,1)$. Then we have
$$
\lambda_1^L(\Omega) = \frac{d}{ds}\Big|_{s=0} \lambda_1^s(\Omega).
$$
Moreover, if $u_s$ is the unique nonnegative $L^2$-normalized Dirichlet eigenfunction
of $(-\Delta)^s$ corresponding to $\lambda_1^s(\Omega)$, then we have
$$
u_s \to \xi_1 \qquad \text{in $L^2(\Omega)$,}
$$
where $\xi_1$ is the unique nonnegative $L^2$-normalized eigenfunction of $\loglap$ corresponding to $\lambda_1^L(\Omega)$.
\end{theorem}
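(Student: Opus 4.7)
The plan is to derive both statements from the Fourier-side identities
$$
\cE_s(u,u) = \int_{\R^N}|\xi|^{2s}|\widehat u(\xi)|^2\,d\xi, \qquad \cE_L(u,u) = \int_{\R^N} 2\log|\xi|\,|\widehat u(\xi)|^2\,d\xi,
$$
the second being Theorem~\ref{teo-representation}(ii) together with Plancherel. Since $\lim_{s\to 0^+}\lambda_1^s(\Omega) = 1$, the derivative claim reduces to showing $\lim_{s\to 0^+}(\lambda_1^s(\Omega)-1)/s = \lambda_1^L(\Omega)$, after which eigenfunction convergence will follow from a compactness argument in $\cH(\Omega)$.

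For the upper bound, I would pick $\phi \in C_c^\infty(\Omega)$ with $\|\phi\|_{L^2}=1$ and apply Theorem~\ref{teo-representation}(i):
$$
\cE_s(\phi,\phi) - 1 = \int_\Omega \phi\bigl[(-\Delta)^s\phi - \phi\bigr]\,dx = s\,\cE_L(\phi,\phi) + o(s),
$$
so that $\lambda_1^s(\Omega) \le 1 + s\,\cE_L(\phi,\phi) + o(s)$; infimizing over $\phi$, via density of $C_c^\infty(\Omega)$ in $\cH(\Omega)$ (which holds for bounded Lipschitz $\Omega$), gives $\limsup_{s\to 0^+}(\lambda_1^s-1)/s \le \lambda_1^L(\Omega)$. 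For the matching lower bound I apply Jensen's inequality to the probability measure $|\widehat u(\xi)|^2\,d\xi$ (for $\|u\|_{L^2}=1$) with the convex exponential:
$$
\cE_s(u,u) = \int_{\R^N} e^{2s\log|\xi|}|\widehat u|^2\,d\xi \;\ge\; \exp\!\Bigl(\int_{\R^N} 2s\log|\xi|\,|\widehat u|^2\,d\xi\Bigr) = e^{s\cE_L(u,u)}.
$$
Applied to the eigenfunction $u = u_s \in H^s_0(\Omega)\subset \cH(\Omega)$, the variational characterization in Theorem~\ref{pr eigen 1}(i) gives $\cE_L(u_s,u_s)\ge \lambda_1^L(\Omega)$, hence $\lambda_1^s(\Omega) \ge e^{s\lambda_1^L(\Omega)}$ and $\liminf_{s\to 0^+}(\lambda_1^s-1)/s \ge \lambda_1^L(\Omega)$.

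Taking logarithms in the Jensen bound also provides the crucial uniform estimate $\cE_L(u_s,u_s) \le s^{-1}\log\lambda_1^s(\Omega) \to \lambda_1^L(\Omega)$. Combining this with the identity
$$
\cE(u_s,u_s) = \cE_L(u_s,u_s) + c_N\!\iint_{|x-y|\ge 1}\!\frac{u_s(x)u_s(y)}{|x-y|^N}\,dxdy - \rho_N\|u_s\|_{L^2}^2
$$
and the crude bound $|\iint_{|x-y|\ge 1}\cdots|\le \|u_s\|_{L^1}^2 \le |\Omega|$ yields a uniform $\cH(\Omega)$-bound on the family $(u_s)$. The compact embedding $\cH(\Omega)\hookrightarrow L^2(\Omega)$ then produces $u_{s_k}\to u_0$ in $L^2(\Omega)$ along a subsequence, with $u_0 \ge 0$ and $\|u_0\|_{L^2}=1$. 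Lower semicontinuity of $\cE_L$ along this sequence -- obtained by splitting the Fourier integral at $|\xi|=1$, using Fatou on the outer piece (where $\log|\xi|\ge 0$) and dominated convergence on the inner piece (via the uniform bound $\|\widehat u_{s_k}\|_\infty \le |\Omega|^{1/2}$ coming from $\supp u_{s_k}\subset\Omega$) -- gives $\cE_L(u_0,u_0) \le \lambda_1^L(\Omega)$, forcing equality. Simplicity of $\lambda_1^L(\Omega)$ (Theorem~\ref{pr eigen 1}(iii)) then yields $u_0 = \xi_1$, and uniqueness of the subsequential limit promotes convergence to the full family.

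The main obstacle is the $\cH(\Omega)$-bound on $(u_s)$: the naive estimate $\cE_s(u_s,u_s) = \lambda_1^s(\Omega) \to 1$ is too weak, since the dominant mass term $\|u_s\|^2_{L^2}=1$ must be peeled off before the first-order behaviour becomes visible. Jensen's inequality is precisely the mechanism that isolates this first-order correction, converting the eigenvalue bound into the needed uniform control on the logarithmic quadratic form $\cE_L(u_s,u_s)$ itself.
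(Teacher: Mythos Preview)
Your Jensen--inequality approach to the lower bound is genuinely different from the paper's and more economical: while the paper obtains $\liminf_{s\to 0^+}(\lambda_1^s-1)/s \ge \lambda_1^L(\Omega)$ only \emph{after} proving eigenfunction convergence (by testing the weak eigenvalue equation for $u_s$ against smooth $\psi$ and passing to the limit via Theorem~\ref{teo-representation}(i)), your convexity argument gives the quantitative bound $\lambda_1^s(\Omega)\ge e^{s\lambda_1^L(\Omega)}$ for every $s$, which is strictly stronger and immediately yields the derivative formula. For this part it suffices to apply Jensen on the dense class $C_c^\infty(\Omega)$, so no extension of the Fourier identity is needed.

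There is, however, a gap in your route to eigenfunction convergence. The step ``taking logarithms in the Jensen bound \dots\ provides $\cE_L(u_s,u_s)\le s^{-1}\log\lambda_1^s$'' applies Jensen to $u=u_s$ and then identifies $\int 2\log|\xi|\,|\widehat{u_s}|^2\,d\xi$ with $\cE_L(u_s,u_s)$. But Theorem~\ref{teo-representation}(ii) gives this identity only for $u\in C_c^\beta(\R^N)$; for a general $u\in\cH(\Omega)$ it needs proof. A Fatou argument along a $C_c^\infty$-approximation yields only $\int 2\log|\xi|\,|\widehat u|^2\,d\xi \le \cE_L(u,u)$, which is the wrong direction for your purpose: Jensen then bounds the Fourier integral, not $\cE_L(u_s,u_s)$, and you get no uniform $\cH(\Omega)$-bound. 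The identity \emph{does} extend to $\cH(\Omega)$, but you must show that $u\mapsto\int_{|\xi|\ge 1}\log|\xi|\,|\widehat u|^2\,d\xi$ is $\cH(\Omega)$-continuous; one way is via the Fourier representation $\cE(u,u)=\int\Psi(\xi)|\widehat u|^2\,d\xi$ with the L\'evy symbol $\Psi(\xi)=\int_{B_1}(1-\cos(\xi\cdot z))\k(z)\,dz$ and the asymptotic $\Psi(\xi)=2\log|\xi|+O(1)$ for $|\xi|\ge 1$, which gives $\int_{|\xi|\ge 1}2\log|\xi|\,|\widehat v|^2\,d\xi \le \cE(v,v)+C\|v\|_{L^2}^2$ and hence the desired continuity. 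This should be stated and justified; the paper avoids the issue altogether by deriving the uniform $\cH(\Omega)$-bound through a direct real-space splitting of $(\cE_s(u_s,u_s)-1)/s$ at $|x-y|=1$, using the elementary monotonicity $|x-y|^{-N-2s}\ge |x-y|^{-N}$ on $\{|x-y|\le 1\}$.
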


Since $\cE_L$ contains competing nonlocal terms of different signs, it seems unclear how the quadratic form $\cE_L$ changes under Schwarz symmetrization or other types of rearrangements. Nevertheless, as we shall see in Section~\ref{sec:funct-analyt-fram}, Theorem~\ref{sec:funct-analyt-fram-approximation-main} allows to deduce the Faber-Krahn-inequality for the logarithmic Laplacian from the corresponding one for the fractional Laplacian due to Ba{\~n}uelos et al., see \cite[Theorem 5]{BLMH}.

\begin{corollary}(Faber-Krahn-inequality for the logarithmic Laplacian)
\label{sec:faber-Krahn-main}
Let $\rho>0$. Among all bounded Lipschitz domains $\Omega$ with $|\Omega| = \rho$, the ball $B=B_r(0)$ with $|B|=\rho$ minimizes $\lambda_1^L(\Omega)$.
\end{corollary}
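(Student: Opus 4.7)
The strategy is exactly the one hinted at just before the statement: use Theorem \ref{sec:funct-analyt-fram-approximation-main} to transfer the Faber--Krahn inequality from the fractional Laplacian to the logarithmic Laplacian by differentiating at $s=0$. The inequality for fractional Laplacians from \cite{BLMH} asserts that, for every $s \in (0,1)$ and every bounded Lipschitz domain $\Omega$ with $|\Omega|=\rho$,
\begin{equation}\label{eq:FK-frac}
\lambda_1^s(B) \le \lambda_1^s(\Omega),
\end{equation}
where $B = B_r(0)$ is the ball with $|B|=\rho$. I would take this as the starting point.

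The plan is then to rewrite \eqref{eq:FK-frac} in a form suitable for differentiation at $s=0$. Since $(-\Delta)^0 = \mathrm{Id}$, the first Dirichlet eigenvalue satisfies $\lambda_1^s(\Omega) \to 1$ as $s \to 0^+$ on any bounded domain (and similarly for $B$); this is also consistent with the $L^2$-convergence of eigenfunctions $u_s \to \xi_1$ established in Theorem \ref{sec:funct-analyt-fram-approximation-main}. Subtracting $1$ from both sides of \eqref{eq:FK-frac} and dividing by $s>0$, which preserves the inequality, we obtain
\begin{equation*}
\frac{\lambda_1^s(B) - 1}{s} \;\le\; \frac{\lambda_1^s(\Omega) - 1}{s} \qquad \text{for every } s \in (0,1).
\end{equation*}
By Theorem \ref{sec:funct-analyt-fram-approximation-main} applied to both $B$ and $\Omega$ (both are bounded Lipschitz domains), the derivative $\frac{d}{ds}\big|_{s=0} \lambda_1^s(\cdot)$ exists and equals $\lambda_1^L(\cdot)$, so both difference quotients admit limits as $s \to 0^+$. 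Passing to the limit yields $\lambda_1^L(B) \le \lambda_1^L(\Omega)$, which is precisely the claim.

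There is essentially no obstacle left at this stage, since all the heavy lifting has already been done: the existence and value of the derivative is handled by Theorem \ref{sec:funct-analyt-fram-approximation-main}, and the monotonicity of the inequality under a positive rescaling is elementary. The only minor point worth noting explicitly is that one should verify that $\lambda_1^s(\Omega) - 1$ is indeed of order $s$ (so that the difference quotient has a limit) for the class of domains considered; this follows directly from the differentiability statement of Theorem \ref{sec:funct-analyt-fram-approximation-main}, which is exactly why the Lipschitz regularity hypothesis on $\Omega$ is assumed in the statement of the corollary.
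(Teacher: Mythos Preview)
Your proposal is correct and follows essentially the same approach as the paper: one invokes the fractional Faber--Krahn inequality $\lambda_1^s(B)\le \lambda_1^s(\Omega)$ from \cite{BLMH}, subtracts $1$, divides by $s$, and passes to the limit $s\to 0^+$ using Theorem~\ref{sec:funct-analyt-fram-approximation-main} for both $B$ and $\Omega$.
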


Next, we wish to discuss the maximum principle for the operator $\loglap u$ on bounded domains $\Omega$. In order to use this maximum principle for existence and regularity results, it is important to consider corresponding inequalities in weak sense.
For this we let $\cV(\Omega)$ denote the space of all measurable functions $u \in L^1_0(\R^N)$ such that
$$
\int \!\!\!\! \int_{\stackrel{x,y \in \Omega}{\text{\tiny $|x-y|\!\le\! 1$}}} \frac{(u(x)-u(y))^2}{|x-y|^N} dx dy + \int_{\Omega}u^2(x)\,dx <+\infty.
$$
We shall see in Section~\ref{sec:maximum-principle} below that the quadratic form $\cE_L(u,\phi)$ is well defined for $u \in \cV(\Omega)$, $\phi \in C^\infty_c(\Omega)$. We may now define a weak notion of the property $\loglap u \ge 0$ in $\Omega$.

\begin{definition}
\label{maximum-principle-definition-main}
\begin{enumerate}
\item[i)] For a function $u \in \cV(\Omega)$, we say that $\loglap u \ge 0$ in $\Omega$ (in weak sense) if $\cE_L(u, \phi) \ge 0$ for all nonnegative $\phi \in C_c^\infty(\Omega)$.
\item[ii)] We say that $\loglap$ satisfies the maximum principle in $\Omega$ if for every $u \in \cV(\Omega)$ with
$\loglap u \ge 0$ in $\Omega$ and $u \ge 0$ in $\R^N \setminus \Omega$ we have $u \ge 0$ a.e. in $\R^N$.
\end{enumerate}
\end{definition}

\begin{theorem}
 \label{sec:maxim-princ-bound-characterization}
Let $\Omega \subset \R^N$ be a bounded Lipschitz domain. Then $\loglap$ satisfies the maximum principle on $\Omega$ if and only if $\lambda_1^L(\Omega)>0$.
\end{theorem}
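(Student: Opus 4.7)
The plan is to prove the two implications separately; the heart of the argument is a Stampacchia-type test of the weak inequality against the negative part.

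\emph{Sufficiency of $\lambda_1^L(\Omega)>0$.} Let $u\in\cV(\Omega)$ satisfy $\loglap u\ge 0$ weakly in $\Omega$ together with $u\ge 0$ in $\R^N\setminus\Omega$. Because $u\ge 0$ outside $\Omega$, the negative part $u^-:=\max\{-u,0\}$ is supported in $\overline{\Omega}$. The first step is a functional-analytic lemma (relying on the Lipschitz regularity of $\partial\Omega$) that yields $u^-\in \cH(\Omega)$ and extends the weak inequality $\cE_L(u,\phi)\ge 0$ by density from $\phi\in C_c^\infty(\Omega)$, $\phi\ge 0$, to all nonnegative $\phi\in \cH(\Omega)$. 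In particular $\cE_L(u,u^-)\ge 0$.

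Decomposing $u=u^+-u^-$ gives $\cE_L(u,u^-)=\cE_L(u^+,u^-)-\cE_L(u^-,u^-)$. The key algebraic observation is that $\cE_L(u^+,u^-)\le 0$: the identity $u^+u^-\equiv 0$ kills the $\rho_N$-term; the long-range contribution $-c_N\int\!\!\int_{|x-y|\ge 1}\frac{u^+(x)u^-(y)}{|x-y|^N}\,dxdy$ is manifestly $\le 0$; and for the short-range form $\mathcal{E}(u^+,u^-)$, setting $a=u^+(x)$, $b=u^+(y)$, $c=u^-(x)$, $d=u^-(y)$, the constraint $ac=bd=0$ yields the pointwise inequality $(a-b)(c-d)=-ad-bc\le 0$. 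Consequently
\[
0\le \cE_L(u,u^-)\le -\cE_L(u^-,u^-),
\]
i.e.\ $\cE_L(u^-,u^-)\le 0$. Combined with the variational estimate $\cE_L(u^-,u^-)\ge \lambda_1^L(\Omega)\|u^-\|_{L^2(\Omega)}^2$ from Theorem~\ref{pr eigen 1}(i), the hypothesis $\lambda_1^L(\Omega)>0$ forces $u^-\equiv 0$, whence $u\ge 0$ a.e.\ in $\R^N$.

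\emph{Necessity.} Assume the maximum principle holds on $\Omega$ and, for contradiction, that $\lambda_1^L(\Omega)\le 0$. Let $\xi_1$ be the strictly positive $L^2$-normalized first eigenfunction from Theorem~\ref{pr eigen 1}(iii), so $\xi_1\in \cH(\Omega)\subset \cV(\Omega)$. Then for every nonnegative $\phi\in C_c^\infty(\Omega)$,
\[
\cE_L(-\xi_1,\phi)=-\cE_L(\xi_1,\phi)=-\lambda_1^L(\Omega)\int_\Omega \xi_1\phi\,dx\ge 0,
\]
so $\loglap(-\xi_1)\ge 0$ weakly in $\Omega$, and $-\xi_1\equiv 0\ge 0$ on $\R^N\setminus\Omega$. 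The maximum principle then gives $-\xi_1\ge 0$ a.e., contradicting $\xi_1>0$ in $\Omega$.

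\emph{Main obstacle.} The sign calculation is elementary; the genuinely delicate step is the admissibility of the test function $u^-$ in the "if" direction, namely proving that $u\in \cV(\Omega)$ with $u\ge 0$ outside $\Omega$ implies $u^-\in \cH(\Omega)$, and extending the weak inequality to all nonnegative elements of $\cH(\Omega)$. Both points hinge on Lipschitz regularity of $\partial\Omega$, essentially to control the boundary contribution $\int_\Omega u^-(x)^2\!\int_{|x-y|\le 1,\, y\notin\Omega}|x-y|^{-N}\,dy\,dx$ arising when one computes the full $\cH(\Omega)$-norm of $u^-$ from the $\Omega\times\Omega$ Dirichlet form encoded in $\cV(\Omega)$.
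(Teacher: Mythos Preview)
Your proof follows the same strategy as the paper: sufficiency via testing the inequality against $u^-$ to obtain $\cE_L(u^-,u^-)\le 0$, and necessity via $-\xi_1$. The necessity argument is identical, and your pointwise sign computation $\cE_L(u^+,u^-)\le 0$ is equivalent to the inequality $\mathfrak{b}(u,u^-,\Omega)\le -\mathfrak{b}(u^-,\Omega)$ in the paper's Proposition~\ref{sec:maxim-princ-bound-u-}.

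There is, however, one genuine gap in your sufficiency argument. Your claimed extension of $\cE_L(u,\phi)\ge 0$ to \emph{all} nonnegative $\phi\in\cH(\Omega)$ is not justified and is in general not even well-posed: for $u\in\cV(\Omega)$ the pairing $\cE_L(u,\phi)$ is only shown to be defined (Lemma~\ref{well-defined-lemma}) when $\phi\in\cH(\Omega)$ has \emph{compact support} in $\Omega$; for $\phi$ reaching $\partial\Omega$ the cross term $\int_\Omega \phi(x)\int_{\R^N\setminus\Omega}u(y)\k(x-y)\,dy\,dx$ need not be finite, so a naive density argument does not go through. In particular, writing $\cE_L(u,u^-)=\cE_L(u^+,u^-)-\cE_L(u^-,u^-)$ presupposes that the left side makes sense, which you have not established.

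The paper (Proposition~\ref{sec:maxim-princ-bound-u-}) sidesteps this entirely. It extends the inequality only to compactly supported $w\in\cH(\Omega)$, then invokes Proposition~\ref{density_1} to produce a monotone sequence $v_n\in\cH(\Omega)$ with compact support, $0\le v_n\le v_{n+1}\le u^-$, $v_n\to u^-$ in $\cW(\Omega)$, and tests with $v_n$. The resulting chain of inequalities bounds $\int_\Omega v_n(x)u^-(x)\kappa_\Omega(x)\,dx$ uniformly; monotone convergence then gives $\int_\Omega (u^-)^2\kappa_\Omega\,dx<\infty$ (so $u^-\in\cH(\Omega)$) and $\cE_L(u^-,u^-)\le 0$ \emph{simultaneously}, without ever needing to define $\cE_L(u,u^-)$. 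Your identification of the boundary term $\int_\Omega u^-(x)^2\kappa_\Omega(x)\,dx$ as the crux is exactly right; what is missing is that its finiteness must be \emph{extracted from} the supersolution property via this approximation, rather than assumed in order to test.
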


We briefly comment on the proof of this theorem. The basic idea is to test the inequality $\loglap u \ge 0$ with the function $u^-$. Indeed we shall see that $u^- \in \cH(\Omega)$ with $\cE_{L}(u^-,u^-)\le 0$ whenever $u \in \cV(\Omega)$ satisfies $\loglap u \ge 0$ in $\Omega$ and $u \ge 0$ in $\R^N \setminus \Omega$, see Proposition~\ref{sec:maxim-princ-bound-u-} below. Since our definition of the inequality $\loglap u \ge 0$ is merely based on testing with functions in $C^\infty_c(\Omega)$, it is also necessary to prove that this space is dense in $\cH(\Omega)$. We shall do this for bounded Lipschitz domains in Theorem~\ref{density} below. The following corollary follows from a combination of Corollary~\ref{sec:faber-Krahn-main} with Theorem~\ref{sec:maxim-princ-bound-characterization} and some further estimates.

\begin{corollary}
\label{sec:main-result-max-principle}
Let $\Omega \subset \R^N$ be a bounded Lipschitz domain. Then $\loglap$ satisfies the maximum principle in $\Omega$ if \underline{one} of the following conditions are satisfied:
\begin{enumerate}
\item[(i)] $h_\Omega + \rho_N \ge 0$ on $\Omega$, where
\begin{equation}
  \label{eq:def-h-Omega}
h_\Omega(x)=  c_N \Bigl( \int_{B_1(x)\setminus \Omega} \frac{1}{|x-y|^N}dy - \int_{\Omega \setminus B_1(x)} \frac{1}{|x-y|^N}dy\Bigr)
\end{equation}
and the constants $c_N$ and $\rho_N$ are given in Theorem~\ref{teo-representation};
\item[(ii)] $|\Omega| \le 2^N \exp \bigl(\frac{N}{2} \bigl(\psi(\frac{N}{2})-\gamma\bigr)\bigr)|B_1(0)|$.
\end{enumerate}

\end{corollary}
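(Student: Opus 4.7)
By Theorem~\ref{sec:maxim-princ-bound-characterization} it suffices to show $\lambda_1^L(\Omega)>0$ under either hypothesis, and the engine for both parts is an algebraic identity that I would first establish for every $u\in\cH(\Omega)$:
$$
\cE_L(u,u) = \frac{c_N}{2}\int_\Omega\!\!\int_\Omega \frac{(u(x)-u(y))^2}{|x-y|^N}\,dx\,dy + \int_\Omega u^2(x)\bigl(h_\Omega(x)+\rho_N\bigr)\,dx.
$$
To derive it I would split the short-range term $\cE(u,u)$ over the four cells $\{\Omega,\Omega^c\}^2$ (using $u\equiv 0$ on $\Omega^c$), and rewrite the long-range term $-c_N\!\int\!\!\int_{|x-y|\ge 1}u(x)u(y)|x-y|^{-N}\,dx\,dy$ via $-2u(x)u(y)=(u(x)-u(y))^2-u(x)^2-u(y)^2$. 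The two squared-difference contributions reassemble into the uncut $\Omega\times\Omega$ double integral, while the short-range $\Omega\times\Omega^c$ boundary piece and the long-range diagonal piece combine by Fubini into $c_N\int_\Omega u^2 h_\Omega\,dx$; adding $\rho_N\|u\|_{L^2(\Omega)}^2$ yields the stated formula (first checked on $C_c^\infty(\Omega)$ and extended by density via Theorem~\ref{density}).

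Condition~(i) then follows at once: if $h_\Omega+\rho_N\ge0$ on $\Omega$, both summands on the right are non-negative, hence $\cE_L(u,u)\ge 0$ and $\lambda_1^L(\Omega)\ge 0$. Testing an $L^2$-normalized minimizer $\xi_1>0$ shows that equality $\cE_L(\xi_1,\xi_1)=0$ would force $\xi_1$ to be constant on $\Omega$ and $h_\Omega+\rho_N\equiv 0$ on $\Omega$, which I would rule out via the logarithmic blow-up of $h_\Omega(x)$ as $x\to\partial\Omega$; this gives $\lambda_1^L(\Omega)>0$. For (ii) I would combine Corollary~\ref{sec:faber-Krahn-main} with the scaling identity
$$
\loglap\bigl[u(r\,\cdot)\bigr](x) = [\loglap u](rx) + 2(\log r)\,u(rx) \qquad (r>0),
$$
obtained by a direct change of variables in (\ref{representation-main}) (the $2\log r$ correction comes from $c_N|S^{N-1}|=2$ integrated on the annulus between $B_1$ and $B_r$). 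Rescaling an eigenfunction yields $\lambda_1^L(rB_1(0))=\lambda_1^L(B_1(0))-2\log r$. Applying the identity above with $\Omega=B_1(0)$ reduces matters to the pointwise bound $h_{B_1(0)}\ge 0$ on $B_1(0)$: for $x\in B_1(0)$ the two sets $B_1(x)\setminus B_1(0)$ and $B_1(0)\setminus B_1(x)$ have equal measure, but $|x-y|^{-N}>1$ on the first (where $|x-y|<1$) and $\le 1$ on the second, with strict inequality whenever $x\ne 0$. This gives $\lambda_1^L(B_1(0))\ge\rho_N$, sharpened to $\lambda_1^L(B_1(0))>\rho_N$ by the same minimizer argument. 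Choosing $R>0$ with $|B_R(0)|=|\Omega|$, Faber--Krahn gives $\lambda_1^L(\Omega)\ge \lambda_1^L(B_R(0))=\lambda_1^L(B_1(0))-2\log R$, and the hypothesis $|\Omega|\le 2^N\exp\!\bigl(\tfrac{N}{2}(\psi(\tfrac{N}{2})-\gamma)\bigr)|B_1(0)|$ is, after taking $N$-th roots, precisely $2\log R\le\rho_N$, yielding $\lambda_1^L(\Omega)>0$.

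The main difficulty I anticipate lies in the two strict-positivity upgrades, both of which require showing that $h_\Omega+\rho_N$ (respectively $h_{B_1(0)}$) cannot vanish identically on the support of a non-zero eigenfunction. For $h_\Omega$ on a general bounded Lipschitz domain this is encoded in the logarithmic boundary singularity; for $h_{B_1(0)}$ it follows from the elementary comparison of the two annular integrals above. A secondary technical point is the justification of Fubini in the algebraic identity, which relies on first treating $u\in C_c^\infty(\Omega)$ and invoking the density result already established.
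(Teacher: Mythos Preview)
Your proof is correct. Part (i) follows the paper's route exactly: the algebraic identity for $\cE_L$ is the paper's Proposition~\ref{E-L-alternative-representation}, and your strict-positivity argument via a constant minimizer and the boundary blow-up of $h_\Omega$ is precisely Corollary~\ref{sec:corollary-max-principle-h-function}.

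For part (ii) your route differs from the paper's in a useful way. The paper (Lemma~\ref{sec:maxim-princ-h-lower-bounds}) works directly on $B_r$ and shows $h_{B_r}(x)\ge h_{B_r}(0)=2\log\frac{1}{r}$ by a radial rearrangement argument, then feeds this into part~(i) for $B_r$ before applying Faber--Krahn. You instead exploit the exact scaling law $\lambda_1^L(B_r)=\lambda_1^L(B_1)-2\log r$ (equivalently, the symbol identity $2\log|r\xi|=2\log|\xi|+2\log r$), which reduces everything to the single inequality $h_{B_1}\ge 0$; your proof of the latter---equal measures of $B_1(x)\setminus B_1(0)$ and $B_1(0)\setminus B_1(x)$ combined with $|x-y|^{-N}>1$ on the first and $\le 1$ on the second---is shorter and more transparent than the paper's rearrangement. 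Both approaches land on the same threshold $2\log R\le\rho_N$, but yours isolates the scale-invariant content ($h_{B_1}\ge 0$) from the scaling correction ($-2\log r$), which is conceptually cleaner and would generalize more readily to other homogeneity computations.
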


We remark that, by Corollary~\ref{sec:faber-Krahn-main} and Theorem~\ref{sec:maxim-princ-bound-characterization}, it suffices to consider the values of $\lambda_1^L(B_r(0))$ for $r>0$. In particular, we shall deduce Corollary \ref{sec:main-result-max-principle}(ii) from the fact that $\lambda_1^L(B_r(0)) >0$ if \mbox{$r\le r_N:= 2 \exp \bigl(\frac{1}{2}\bigl(\psi(\frac{N}{2})-\gamma \bigr)\bigr)$,} see Lemma~\ref{sec:maxim-princ-h-lower-bounds} below. It is instructive to compare this estimate with Beckner's logarithmic estimate of uncertainty (see \cite[Theorem 1]{B}), which implies that
\begin{equation}
  \label{eq:beckner-inequality}
\frac{\cE_L (u,u)}{2} =
 \int_{\R^N} |\hat u|^2 (\zeta) \log |\zeta| d\zeta \ge \int_{\R^N} \Bigl[\psi(\frac{N}{4})- \log (\pi |x|)\Bigr]u^2(x) dx
\end{equation}
for Schwarz functions $u \in \cS(\R^N)$. Clearly, the RHS of (\ref{eq:beckner-inequality}) is nonnegative if $\supp u \subset B_{r_{N,B}}$ with $r_{N,B} := \frac{e^{\psi (\frac N4)}}{\pi}$, and thus $\lambda_1^L(B_r(0)) \ge 0$ for $r < r_{N,B}$. The values of $r_N$ and $r_{N,B}$ can be computed explicitely for given $N$. In particular, $r_{N,B}$ is much smaller than $r_N$ in low dimensions, i.e. for $N \le 4$:
\vspace{-0.5cm}

\begin{center}
\renewcommand{\arraystretch}{1.6}
    \begin{tabular}{ l | l | l | l | l |}
    $N$ & 1 & 2 & 3 & 4\\ \hline
    $r_{N,B}$ & $\frac{e^{- \gamma -\frac{\pi}{2}}}{8 \pi} $& $\frac{e^{-\gamma }}{4 \pi}$&$\frac{e^{- \gamma +\frac{\pi}{2}}}{8 \pi}$&$\frac{e^{-\gamma}}{\pi}$
\\ \hline
$r_N$& $e^{-\gamma}$& $2 e^{-\gamma}$& $e^{1-\gamma}$& $2 e^{\frac{1}{2}-\gamma}$\\
\end{tabular}
\renewcommand{\arraystretch}{1}
\end{center}
Consequently, Corollary~\ref{sec:main-result-max-principle}(ii) cannot be deduced from (\ref{eq:beckner-inequality}) even in the case when $\Omega$ is a ball.
On the other hand, since $\psi(t) \sim \log t$ as $t \to \infty$, we have
$r_N < r_{N,B}$ for large $N$.

 Corollary~\ref{sec:main-result-max-principle}(ii) shows that $\loglap$ satisfies the maximum principle on domains $\Omega$ which are not too big in terms of Lebesgue measure. If, on the other hand, $\Omega$ has a large inradius, then $\loglap$ does not satisfy the maximum principle on $\Omega$. Indeed, we have the following result.

\begin{corollary}
\label{non-max-principle-simple}
Let $\Omega \subset \R^N$ be a bounded Lipschitz domain, and let $\lambda_1^1(\Omega)$ denote the first eigenvalue of the Dirichlet Laplacian $-\Delta$ on $\Omega$. Then we have
$\lambda_1^L(\Omega) \le \log \lambda_1^1(\Omega)$. Moreover, if $\lambda_1^1(\Omega) \le 1$, then $\loglap$ does not satisfy the maximum principle on $\Omega$.
\end{corollary}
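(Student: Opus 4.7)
\medskip

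\noindent\textbf{Proof proposal.} The plan is to reduce both claims to Theorem~\ref{sec:funct-analyt-fram-approximation-main} and Theorem~\ref{sec:maxim-princ-bound-characterization} together with a simple concavity argument applied to the Rayleigh quotient of $(-\Delta)^s$.

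First I would establish the pointwise inequality
$$
\lambda_1^s(\Omega)\le \lambda_1^1(\Omega)^{s}\qquad\text{for every } s\in(0,1).
$$
Let $\varphi_1\in H^1_0(\Omega)$ denote the $L^2$-normalized first Dirichlet eigenfunction of $-\Delta$ on $\Omega$, extended by zero to $\R^N$. Since $\Omega$ is Lipschitz, $\varphi_1$ belongs to the natural form domain of $(-\Delta)^s$ for every $s\in(0,1)$, so the Fourier/spectral representation of the Rayleigh quotient yields
$$
\lambda_1^s(\Omega)\le \int_{\R^N}|\xi|^{2s}\,|\widehat{\varphi_1}(\xi)|^2\,d\xi.
$$
Because $\varphi_1$ is $L^2$-normalized, $|\widehat\varphi_1(\xi)|^2\,d\xi$ is a probability measure on $\R^N$, and Jensen's inequality applied to the concave function $t\mapsto t^s$ (for $s\in(0,1)$) on $(0,\infty)$ gives
$$
\int_{\R^N}|\xi|^{2s}\,|\widehat{\varphi_1}|^2\,d\xi
\le \Bigl(\int_{\R^N}|\xi|^{2}\,|\widehat{\varphi_1}|^2\,d\xi\Bigr)^{s}
= \bigl(\lambda_1^1(\Omega)\bigr)^{s},
$$
where the last equality is Plancherel combined with $\|\nabla\varphi_1\|_2^2=\lambda_1^1(\Omega)$.

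Next I would pass to the limit $s\to 0^+$. Since $\lambda_1^0(\Omega)=1$ (as $(-\Delta)^0=\mathrm{Id}$), Theorem~\ref{sec:funct-analyt-fram-approximation-main} gives
$$
\lambda_1^L(\Omega)=\lim_{s\to 0^+}\frac{\lambda_1^s(\Omega)-1}{s}
\le \lim_{s\to 0^+}\frac{\lambda_1^1(\Omega)^{s}-1}{s}
=\log\lambda_1^1(\Omega),
$$
which is the first assertion. For the second assertion, if $\lambda_1^1(\Omega)\le 1$, then $\lambda_1^L(\Omega)\le \log\lambda_1^1(\Omega)\le 0$, and Theorem~\ref{sec:maxim-princ-bound-characterization} implies that $\loglap$ does not satisfy the maximum principle on $\Omega$.

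I do not anticipate a serious obstacle: the only point that deserves a brief justification is the Rayleigh-quotient bound via Jensen, which only requires that $\varphi_1$ is admissible in the form domain of $(-\Delta)^s$ for $s\in(0,1)$. This is standard for Lipschitz $\Omega$ via the embedding $H^1_0(\Omega)\hookrightarrow H^s_0(\Omega)$, and once this is noted the remainder of the argument is mechanical.
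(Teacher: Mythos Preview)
Your proof is correct and follows exactly the same strategy as the paper: establish $\lambda_1^s(\Omega) \le [\lambda_1^1(\Omega)]^s$ for $s\in(0,1)$, pass to the limit $s\to 0^+$ via Theorem~\ref{sec:funct-analyt-fram-approximation-main}, and conclude with Theorem~\ref{sec:maxim-princ-bound-characterization}. The only difference is that the paper imports the inequality $\lambda_1^s(\Omega)\le [\lambda_1^1(\Omega)]^s$ as a black box from \cite{musina-nazarov}, whereas you supply a self-contained proof of it via Jensen's inequality applied to $t\mapsto t^s$ with $\varphi_1$ as test function---which is essentially the argument underlying that citation.
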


This result follows from Theorems~\ref{sec:funct-analyt-fram-approximation-main},~\ref{sec:maxim-princ-bound-characterization} and an inequality in \cite{musina-nazarov} which relates $\lambda_1^1(\Omega)$ to the first Dirichlet eigenvalue $\lambda_1^s(\Omega)$ of $(-\Delta)^s$ for $s \in (0,1)$, see Section~\ref{sec:maximum-principle} below. 

Finally, we consider the Poisson problem
\begin{equation}\label{eq 3.1-poisson-main}
\left\{ \arraycolsep=1pt
\begin{array}{lll}
 \loglap u=f\quad \  {\rm in}\quad   \Omega,\\[2mm]
 \phantom{ \loglap   }
  u=0\quad \ {\rm{in}}\  \quad \R^N\setminus \Omega,
\end{array}
\right.
\end{equation}
with given $f \in L^2(\Omega)$ in weak sense. The appropriate framework is again given by the Hilbert space $\cH(\Omega)$ defined above. We say that $u \in \cH(\Omega)$ is a weak solution of (\ref{eq 3.1-poisson-main}) if
$$
\cE_L(u,\phi)= \int_{\Omega}f \phi\,dx \qquad \text{for every $\phi \in \cH(\Omega)$.}
$$

A standard application of the Riesz representation theorem shows that, if $\Omega$ is a bounded domain with $\lambda^L_1(\Omega)>0$, then (\ref{eq 3.1-poisson-main}) admits a weak solution $u \in \cH(\Omega)$ for every $f \in L^2(\Omega)$. Our final main result deals with the interior and boundary regularity of weak solutions of (\ref{eq 3.1-poisson-main}) in the case when
 $f \in L^\infty(\Omega)$.

\begin{theorem}\label{pr 2.1-main}
Let $\Omega \subset \R^N$ be a Lipschitz domain which satisfies a uniform exterior sphere condition, 
 $f \in L^\infty(\Omega)$ and  $u \in \cH(\Omega) \cap L^\infty(\R^N)$ be a weak solution of (\ref{eq 3.1-poisson-main}).
 Then $u \in C(\overline \Omega)$ and
\begin{equation}\label{boundary-decay-main}
|u(x)|= O\bigl(\log^{-\tau} \frac1{\rho(x)}\bigr) \qquad \text{for every $\tau \in (0,\frac{1}{2})\quad$ as $\rho(x):= {\rm dist}(x,  \partial\Omega) \to 0$.}
\end{equation}
\end{theorem}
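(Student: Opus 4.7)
My plan splits into (a) interior continuity, which is relatively soft, and (b) the boundary decay, which requires an explicit barrier computation; this is where I expect the main difficulty to lie.

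\textbf{Step 1: interior continuity.} Since $u\in L^\infty(\R^N)$, both the non-local tail $\int_{\R^N\setminus B_1(x)}u(y)|x-y|^{-N}dy$ and the zero-order term in (\ref{representation-main}) are bounded, so the weak equation can be rewritten as
$$
c_N\!\int_{B_1(x)}\!\frac{u(x)-u(y)}{|x-y|^N}\,dy\;=\;g(x)\in L^\infty(\Omega).
$$
This is a singular integral equation with an order-zero kernel of the type studied in \cite{KM}; their De~Giorgi–Moser type analysis, giving Harnack inequalities without scale invariance, yields an interior modulus of continuity for $u$. Hence $u\in C(\Omega)$, and Proposition~\ref{dini-continuity-well-defined-theorem} then guarantees that the equation holds pointwise in $\Omega$.

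\textbf{Step 2: construction of a boundary barrier.} Fix $x_0\in\partial\Omega$. The uniform exterior sphere condition provides $y_0\in\R^N\setminus\overline\Omega$ and $r>0$ (both uniform in $x_0$) with $B_r(y_0)\cap\Omega=\emptyset$ and $x_0\in\partial B_r(y_0)$. Set $d(x):=|x-y_0|-r$, so that $0\le d\le\rho$ on $\Omega$ and $d(x_0)=0$. For each $\tau\in(0,1/2)$ I would build a radial barrier
$$
\varphi(x)=\eta\bigl(d(x)\bigr),\qquad\eta(t)=A\bigl(\log(1/t)\bigr)^{-\tau}\ \text{for small }t,
$$
extended bounded and Lipschitz to all of $\R^N$ with $\varphi\ge\|u\|_{L^\infty}$ outside $B_\delta(x_0)$. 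The crucial claim is that
$$
\loglap\varphi(x)\ \ge\ \|f\|_{L^\infty}\qquad\text{for all }x\in\Omega\ \text{with }d(x)<\delta,
$$
for $A$ large and $\delta$ small (depending only on $N,r,\tau,\|u\|_\infty,\|f\|_\infty$). To verify this I would split (\ref{representation-main}) over the regions $B_{d(x)/2}(x)$, $B_1(x)\setminus B_{d(x)/2}(x)$, and $\R^N\setminus B_1(x)$: concavity of $\eta$ controls the near-diagonal piece, the intermediate annulus produces a positive leading contribution of order $\tau(\log(1/d(x)))^{1-\tau}$, and the far field stays bounded. The restriction $\tau<1/2$ appears when controlling the error from replacing $\partial B_r(y_0)$ by its tangent hyperplane, and matches the critical exponent in Beckner's inequality (\ref{eq:beckner-inequality}). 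This sharp barrier estimate is the main technical obstacle.

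\textbf{Step 3: comparison via the maximum principle.} Shrinking $\delta$ so that $D:=\Omega\cap B_\delta(x_0)$ meets the smallness condition of Corollary~\ref{sec:main-result-max-principle}(ii), $\loglap$ satisfies the maximum principle on $D$. The function $\varphi-u$ lies in $\cV(D)$, is $\ge 0$ on $\R^N\setminus D$ by construction, and satisfies $\loglap(\varphi-u)\ge\|f\|_{L^\infty}-f\ge0$ in $D$; hence $u\le\varphi$ a.e.\ in $D$. The same argument applied to $-u$ yields $|u|\le\varphi$ a.e.\ in $D$. Since the constants $r,A,\delta$ are uniform in $x_0\in\partial\Omega$, a finite covering of $\partial\Omega$ gives $|u(x)|=O(\log^{-\tau}(1/\rho(x)))$ uniformly as $\rho(x)\to0$, which is (\ref{boundary-decay-main}). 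Combined with Step~1, this extends $u$ continuously to $\overline\Omega$ with $u\equiv0$ on $\partial\Omega$.
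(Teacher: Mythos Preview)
Your strategy coincides with the paper's: interior regularity via Kassmann--Mimica applied to the equation rewritten with the kernel $\k$, then a radial barrier of the form $(-\log d(x))^{-\tau}$ combined with the weak maximum principle on a small subdomain near each boundary point. Two points, however, deserve correction.

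First, your explanation of the restriction $\tau<\frac12$ is off. It has nothing to do with replacing the exterior sphere by its tangent hyperplane, nor with Beckner's inequality. In the barrier computation (the paper's Lemma~5.3), the dominant \emph{positive} term is $\kappa_N(-\log d(x))^{1-\tau}$, coming from the integral of $\varphi(x)|x-y|^{-N}$ over the exterior ball where $\varphi\equiv 0$; the dominant \emph{negative} term, coming from the region $\{d(y)>d(x)\}$ where $\varphi(y)>\varphi(x)$, turns out to be $\frac{\tau\kappa_N}{1-\tau}(-\log d(x))^{1-\tau}$. The net contribution is $\ge\frac{(1-2\tau)\kappa_N}{1-\tau}(-\log d(x))^{1-\tau}$, positive precisely when $\tau<\frac12$. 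Your proposed splitting at scale $d(x)/2$ is coarser than the paper's (which inserts an intermediate scale $\ell(d(x))$ with $\ell(t)\to 0$ slower than any power of $-\log t$); without such an intermediate scale you will not isolate the two competing leading terms cleanly.

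Second, there is a genuine technical point you glossed over. The profile $t\mapsto(-\log t)^{-\tau}$ is \emph{not} Dini continuous at $t=0$, so your barrier $\varphi$ is not uniformly Dini continuous on $\overline D$; it is not immediate that the pointwise inequality $\loglap\varphi\ge\|f\|_{L^\infty}$ transfers to the weak sense needed for Definition~\ref{maximum-principle-definition-main}. The paper avoids this by a sliding argument: it pushes the exterior ball slightly further into $\R^N\setminus\Omega$ by a parameter $t>0$, so that the translated barrier $V_{t,x_*}$ is smooth on $\overline{\Omega}$; the comparison is then carried out on the intersection of $\Omega$ with a thin annulus around the translated ball, and only afterwards does one send $t\to0$ to recover the estimate at the tangent point. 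This also ensures the comparison domain is Lipschitz (for $t=0$ the tangency of the exterior sphere with $\partial\Omega$ would produce a cusp). You should incorporate this sliding step.
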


The statement on the interior continuity of $u$ is essentially a consequence of recent regularity estimates by Kassmann and Mimica~\cite{KM}. The boundary decay estimate~\eqref{boundary-decay-main} requires extra work and follows by constructing suitable barrier functions and applying the maximum principle for $\loglap$ on subdomains with small measure. We emphasize that there is no restriction on the measure of $\Omega$ itself in Theorem~\ref{pr 2.1-main}.

The paper is organized as follows. In Section~\ref{sec:basic-properties} we establish basic properties of the logarithmic Laplacian, and we prove Theorem~\ref{teo-representation} and Proposition~\ref{dini-continuity-well-defined-theorem}. In Section~\ref{sec:funct-analyt-fram}, we set up the functional analytic framework for Dirichlet problems related to $\loglap$ in weak sense. Moreover, we prove Theorems~\ref{pr eigen 1}, \ref{sec:funct-analyt-fram-approximation-main} and Corollary~\ref{sec:faber-Krahn-main} in this section. In Section~\ref{sec:maximum-principle}, we discuss the maximum principle for the operator $\loglap$. In particular, we prove Theorem~\ref{sec:maxim-princ-bound-characterization} and Corollaries~\ref{sec:main-result-max-principle} and \ref{non-max-principle-simple}. In Section~\ref{sec:regul-bound-decay}, we consider the interior and boundary regularity of weak solutions of (\ref{eq 3.1-poisson-main}), and we prove Theorem~\ref{pr 2.1-main}. Finally, in an appendix, we prove a logarithmic boundary Hardy inequality related to the function spaces defined in Section~\ref{sec:maximum-principle}. 

Throughout the remainder of the paper, we let $B_r(x) \subset \R^N$ denote the open ball of radius $r$ centered at $x \in \R^N$, and we put $B_r:= B_r(0)$ for $r>0$. Moreover, if a fixed domain $\Omega \subset \R^N$ is considered, we let $\rho(x)= {\rm dist}(x,  \partial\Omega)$ for $x \in \R^N$.

\section{Basic properties}
\label{sec:basic-properties}

We begin this section with the derivation of the integral representation (\ref{representation-main}) of $\loglap$ for functions $u \in C^\beta_c(\R^N)$ as stated in Theorem~\ref{teo-representation}.

\begin{proof}[Proof of Theorem~\ref{teo-representation}]
Let $u\in C^\beta_c(\R^N)$. For $0< \s < \min \{\frac{\beta}{2},\frac{1}{2}\}$, the principal value in the definition of $[(-\Delta)^\s u](x)$ reduces to a standard Lebesgue integral. Let
$R>4$ be chosen such that $\supp u \subset B_{\frac{R}{4}}$. For $x \in \R^N$, we then have
$$
[(-\Delta)^\s u](x)= c_{N,\s}\int_{\R^N}\frac{u(x)-u(x+z)}{|z|^{N+2\s}}\,dz
= A_R(\s,x) + D_R(\s)\,u(x)
$$
with
$$
A_R(\s,x) := c_{N,\s} \Bigl( \int_{B_R}\frac{u(x)-u(x+z)}{|z|^{N+2\s}}\,dz - \int_{\R^N \setminus B_R}\frac{u(x+z)}{|z|^{N+2\s}}\,dz \Bigr)
$$
and
$$
D_R(\s):= c_{N,\s}\int_{\R^N \setminus B_R}|z|^{-N-2\s}dz =  \frac{c_{N,\s} |S^{N-1}|R^{-2\s}}{2\s}.
$$
Here we recall that $B_R := B_R(0)$. If $|x| \ge \frac{R}{2}$, we have that  $u(x)=0$ and $|z| \ge \frac{|x|}{2} \ge 1$ whenever $x+z \in \supp u$, and therefore
$$
|A_R(\s,x)|= c_{N,\s}\Bigl|\int_{\R^N}\frac{u(x+z)}{|z|^{N+2s}}\,dz\Bigr| \le
c_{N,\s} \int_{\R^N}\frac{|u(x+z)|}{|z|^{N}}\,dz \le  2^{N} c_{N,\s}\|u\|_{L^1} |x|^{-N}.
$$
Consequently,
\begin{equation}
  \label{eq:p-a-alpha-est}
\|A_R(\s,\cdot)\|_{L^p(\R^N \setminus B_{\frac{R}{2}})} \le  2^{2N-\frac{N}{p}} c_{N,\s}\|u\|_{L^1}\Bigl(
\frac{|S^{N-1}|}{(p-1)N}\Bigr)^{\frac{1}{p}}{R}^{\frac{N}{p}-N}
\end{equation}
for $1<p< \infty$ and
\begin{equation}
  \label{eq:infty-a-alpha-est}
\|A_R(\s,\cdot)\|_{L^\infty(\R^N \setminus B_{\frac{R}{2}})} \le  4^{N} c_{N,\s}\|u\|_{L^1} {R}^{-N}.
\end{equation}
We now write
\begin{equation}
  \label{eq:def-d-N-s}
c_{N,\s} = \s d_{N}(\s) \qquad \text{with}\qquad d_{N}(\s) := \frac{c_{N,\s}}{\s}= \pi^{-\frac{N}{2}} 2^{2\s}
\frac{\Gamma(\frac{N}{2}+\s)}{\Gamma(1-\s)},
\end{equation}
and we note that
$$
d_N(0) = \pi^{-\frac{N}{2}}\Gamma(\frac{N}{2})= c_N \quad \text{and}\quad d_N'(0)= \pi^{-\frac{N}{2}} \Bigl( (2 \log 2 - \gamma)\Gamma(\frac{N}{2})+ \Gamma'(\frac{N}{2})\Bigr)= c_N \rho_N.
$$
From (\ref{eq:p-a-alpha-est}) and (\ref{eq:infty-a-alpha-est}), we then deduce that
\begin{equation}
  \label{eq:l-p-conv-1}
\|A_R(\s,\cdot)\|_{L^p(\R^N \setminus B_{\frac{R}{2}})} \le \s m_p R^{\frac{N}{p}-N}
\end{equation}
for $1 < p \le \infty$ with a constant $m_p>0$ depending on $u$ but not on $R$ and $\s$.

On the other hand, for $x \in B_{\frac{R}{2}}$, we have $|z| \le B_R$ whenever $x+z \in \supp u$ and therefore the second integral in the definition of $A_R(\s,x)$ vanishes. Since $u \in C^\beta_c(\R^N)$, it is thus easy to see that
\begin{align}
\frac{A_R(\s,x)}{\s} &= d_{N}(\s) \int_{B_R}\frac{u(x)-u(x+z)}{|z|^{N+2\s}}\,dz  \nonumber\\
&\to \tilde A_R(x):= c_N
\int_{B_R}\frac{u(x)-u(x+z)}{|z|^{N}}\,dz  \qquad \text{as $\ \s \to 0^+$,}   \label{eq:l-p-conv-2}
\end{align}
and this convergence is uniform in $x \in B_{\frac{R}{2}}$. Next we note that
$$
\lim_{s \to 0^+}D_R(\s)=  d_{N}(\s) \frac{|S^{N-1}|}{2}R^{-2\s} = \frac{c_N|S^{N-1}|}{2}= 1
$$
and
$$\lim_{s \to 0^+} \frac{D_R(\s)-1}{\s}= \frac{|S^{N-1}|}{2} \bigl(d_N'(0) -2 d_N(0) \log R) = \rho_N - c_N |S^{N-1}| \log R =:\kappa_R.
$$
Since $u \in C_c^\beta(\R^N)$, we then conclude that
\begin{equation}
\lim_{s \to 0^+}\Bigl\|\frac{D_R(\s)u-u}{\s}-\kappa_R u \Bigr\|_{L^p(\R^N)} = 0  \label{eq:l-p-conv-3}
\end{equation}
for $1<p \le \infty$. By (\ref{eq:l-p-conv-2}) and the definition of $\kappa_R$ above, we also find that
\begin{align}
\tilde A_R(x) &+\kappa_R u(x)= c_N \Bigl( \int_{B_R}\frac{u(x)-u(x+z)}{|z|^{N}}\,dz - u(x)\int_{B_R \setminus B_1}\frac{1}{|z|^{N}}\,dz\Bigr) + \rho_N u(x)\nonumber\\
&=c_N \int_{B_R}\frac{ u(x)1_{B_1}(z)-
u(x+z)}{|z|^{N }}  dz + \rho_N u(x) \nonumber\\
&= [\loglap u](x)+ F(x) \qquad \text{with}\quad F(x):=c_N\int_{\R^N \setminus B_R}\frac{u(x+z)}{|z|^{N }}dz \label{eq:l-p-conv-4}
\end{align}
for $x \in \R^N$. By the same estimates as for $A_R(s,x)$, we see that $F(x)=0$ for $|x| \le \frac{R}{2}$ and
\begin{equation}
  \label{eq:estimate-F-x}
\|F\|_{L^P(\R^N \setminus B_{\frac{R}{2}})} \le M_p R^{\frac{N}{p}-N}
\end{equation}
for $1 < p \le \infty$ with a constant $M_p>0$ depending on $u$ but not on $R$ and $\s$.
Combining (\ref{eq:l-p-conv-1}), (\ref{eq:l-p-conv-2}), (\ref{eq:l-p-conv-3}), (\ref{eq:l-p-conv-4}) and (\ref{eq:estimate-F-x}), we see that
$$
\limsup_{\s \to 0^+}
\Bigl\|\frac{(-\Delta)^\s u -u}{\s}- \,\loglap u \Bigr\|_{L^p(\R^N)}  \le (m_p+M_p) R^{\frac{N}{p}-N}
\qquad \text{for every $R>0$, $p \in (1,\infty]$}
$$
and therefore
$$
\lim_{\s \to 0^+}
\Bigl\|\frac{(-\Delta)^\s u -u}{\s}- \,\loglap u \Bigr\|_{L^p(\R^N)}=0\qquad \text{for every $p \in (1,\infty]$.}
$$
In particular, this holds for $p=2$, and therefore, using the continuity of the Fourier transform as a map $L^2(\R^N) \to L^2(\R^N)$, we find that
$$
\widehat{\loglap u} = \lim_{\s \to 0^+}\frac{\widehat{(-\Delta)^\s u} -\hat{u}}{\s}= \lim_{\s \to 0^+}\Bigl(\frac{|\cdot|^{2\s}-1}{\s}\Bigr)\hat{u} = 2 \log|\cdot| \,\hat{u} \quad \text{in \,$L^2(\R^N)$}.
$$
From this we infer that
$$
\widehat{\loglap u }(\xi) = 2 \log|\xi| \,\hat{u}(\xi) \qquad \text{for almost every $\xi \in \R^N$,}
$$
as claimed.
\end{proof}

It is convenient to introduce the kernel functions
\begin{equation}
  \label{eq:def-k}
\k: \R^N \setminus \{0\} \to \R, \qquad \k(z)=c_N 1_{B_1}(z)|z|^{-N}
\end{equation}
and
\begin{equation}
  \label{eq:def-j}
\j: \R^N \to \R,\qquad \j(z)= c_N 1_{\R^N \setminus B_1}(z)|z|^{-N}.
\end{equation}
Then the integral representation (\ref{representation-main}) can be rewritten as
\begin{equation}
  \label{eq:representation-main-rewritten}
\loglap u (x) = \int_{\R^N}(u(x)-u(y))\k(x-y)\,dy - [\j * u](x) + \rho_N u(x).
\end{equation}
We now analyze for which functions $u$ and points $x \in \R^N$, the expression $\loglap u(x)$ is well-defined by this formula. We need to recall some definitions. Let $\Omega \subset \R^N$ be a measurable subset and $u: \Omega \to \R$ be a measurable function. The module of continuity of $u$ at a point $x \in \Omega$
is defined by
$$
\omega_{u,x,\Omega}: (0,\infty) \to [0,\infty),\qquad  \omega_{u,x,\Omega}(r)= \sup_{\stackrel{
y \in \Omega}{|y-x|\le r}} |u(y)-u(x)|.
$$
The function $u$ is called {\em Dini continuous} at $x$ if $\int_0^1 \frac{\omega_{u,x,\Omega}(r)}{r}\,dr < +\infty$.
If
\begin{equation*}
\int_0^1 \frac{\omega_{u,\Omega}(r)}{r}\,dr < \infty \qquad \text{for the uniform continuity module $\omega_{u,\Omega}(r):=
\sup \limits_{x \in \Omega}\omega_{u,x,\Omega}(r)$,}
\end{equation*}
then we call $u$ {\em uniformly Dini continuous} in $\Omega$. In the following, for $s \in \R$, we also let $L^1_s(\R^N)$ denote the space of locally integrable functions $u: \R^N \to \R$ such that
$$
\|u\|_{L^1_s}:= \int_0^\infty \frac{|u(x)|}{(1+|x|)^{N+2s}}\,ds < +\infty.
$$
We need the following observation.

\begin{lemma}
\label{continuity-convolution}
Let $u \in L^1_0(\R^N)$ and  $v: \R^N \to \R$ be measurable with $|v(x)| \le C (1+|x|)^{-N}$ for $x \in \R^N$ with some $C>0$.
Then the convolution $v * u: \R^N \to \R$ is well-defined and continuous.
\end{lemma}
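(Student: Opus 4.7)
The proof naturally splits into well-definedness and continuity. For well-definedness I would use the elementary inequality $1+|y| \le (1+|x|)(1+|x-y|)$, which combined with the hypothesis on $v$ yields the pointwise bound
$$
|v(x-y)u(y)| \le C(1+|x|)^N \frac{|u(y)|}{(1+|y|)^N}.
$$
Since $u \in L^1_0(\R^N)$, the right-hand side is integrable in $y$ for every fixed $x$, so $(v*u)(x)$ is absolutely convergent, and moreover $(v*u)$ is locally bounded as a function of $x$.

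For continuity, my plan is to approximate $v$ by its compactly supported truncations $v_R := v\cdot 1_{B_R}$ and to establish two complementary facts: (a) each $v_R * u$ is continuous on $\R^N$, and (b) $v_R * u \to v*u$ uniformly on compact subsets of $\R^N$ as $R \to \infty$. Together these immediately give continuity of $v*u$ as the locally uniform limit of continuous functions.

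The uniform convergence in (b) on a fixed closed ball $K = \overline{B_M}$ reduces to controlling the tail integral over $\{y : |x-y|\ge R\}$. For $x \in K$ and $|x-y|\ge R$, the triangle inequality gives $|y|\ge R-M$, while the estimate $1+|y| \le (1+M)(1+|x-y|)$ combined with the hypothesis on $v$ yields $|v(x-y)| \le C(1+M)^N (1+|y|)^{-N}$. This leads to
$$
|(v-v_R)*u(x)| \le C(1+M)^N \int_{|y|\ge R-M} \frac{|u(y)|}{(1+|y|)^N}\,dy,
$$
which tends to $0$ uniformly in $x \in K$ as $R\to \infty$ by absolute continuity of the $L^1_0$-integral.

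For (a), I would observe that for $x \in K$ only the values of $u$ on $B_{R+M}$ contribute, so on $K$ one may write $(v_R*u)(x) = (v_R*\tilde u)(x)$ with $\tilde u := u\cdot 1_{B_{R+M}} \in L^1(\R^N)$, using local integrability of $u$. Since $v_R \in L^\infty(\R^N)$ with $\|v_R\|_\infty \le C$, the continuity of $v_R*\tilde u$ on $K$ is the classical consequence of the translation continuity of $L^1$-functions, via the estimate $|(v_R*\tilde u)(x_n) - (v_R*\tilde u)(x)| \le \|v_R\|_\infty \|\tilde u(x_n - \cdot) - \tilde u(x-\cdot)\|_{L^1}$. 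The main obstacle in the argument is really step (b), i.e.\ arranging the tail bookkeeping so that the truncation error becomes uniformly small on each compact set despite the fact that $v$ is only measurable and only satisfies the pointwise decay $|v(x)| \le C(1+|x|)^{-N}$; the remaining steps are standard convolution manipulations.
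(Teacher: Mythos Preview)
Your proof is correct, but it takes a different route from the paper's. The paper approximates $u$ rather than $v$: it first observes that if $u \in C_c(\R^N)$ then continuity of $v*u$ follows directly from Lebesgue's dominated convergence theorem (writing $(v*u)(x)=\int v(z)u(x-z)\,dz$ and dominating by $\|u\|_\infty\, 1_{K'}$ on a fixed compact set $K'$ containing the shifted supports), then establishes the operator bound $\|v*u\|_{L^\infty(K)} \le C_K \|u\|_{L^1_0}$ for compact $K$---which is exactly your well-definedness estimate---and finally passes to general $u\in L^1_0(\R^N)$ by density of $C_c(\R^N)$ in $L^1_0(\R^N)$. Your approach instead truncates $v$ to $v_R$, reduces $v_R*u$ on each ball to an $L^\infty*L^1$ convolution via a second truncation of $u$, and appeals to translation continuity in $L^1$. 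The paper's argument is shorter and more conceptual; yours is a bit more hands-on but has the advantage of being entirely self-contained, not relying on the density of $C_c$ in the weighted space $L^1_0$.
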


\begin{proof}
This is a direct consequence of Lebesgue's theorem if
$u \in C_c(\R^N)$. Moreover, if $K \subset \R^N$ is compact, we see that
\begin{align*}
|[v * u](x)|&\le \int_{\R^N}|v(x-y)||u(y)|\,dy \le C \int_{\R^N} \frac{|u(y)|}{(1+|x-y|)^N}\,dy \\
&\le C_K \int_{\R^N} \frac{|u(y)|}{(1+|y|)^N}\,dy = C_K \|u\|_{L^1_0} \qquad \text{for $x \in K$ with a constant $C_K>0$},
\end{align*}
and therefore $\|v * u\|_{L^\infty(K)} \le C_K \|u\|_{L^1_0}$. Hence, using the fact that $C_c(\R^N)$ is dense in $L^1_0(\R^N)$, a standard approximation argument shows that $v* u$ is continuous on $K$ for arbitrary $u \in L^1_0(\R^N)$. Since $K$ is chosen arbitrarily, we conclude that $v* u: \R^N \to \R$ is continuous.
\end{proof}

The following is an extension of Proposition~\ref{dini-continuity-well-defined-theorem}.

\begin{proposition}
\label{dini-continuity-well-defined}
Let $u \in L^1_0(\R^N)$.
\begin{enumerate}
\item[i)] If $u$ is Dini continuous at some $x \in \R^N$, then $[\loglap u](x)$ is well-defined by the formula~(\ref{representation-main}). Moreover, if $\Omega \subset \R^N$ is an open subset and $x \in \Omega$, then we have the alternative representation
\begin{equation}
 \loglap  u(x) = c_{N} \int_{\Omega} \frac{ u(x) -u(y)}{|x-y|^{N} } dy - c_N \int_{\R^N \setminus \Omega} \frac{u(y)}{|x-y|^{N}}\,dy + [h_\Omega(x)+\rho_N] u(x), \label{representation-regional}
\end{equation}
where $h_\Omega: \Omega \to \R$ is given by (\ref{eq:def-h-Omega}).
\item[ii)] If $u$ is uniformly Dini continuous in some open subset $\Omega \subset \R^N$, then $\loglap u$ is continuous in $\Omega$.
\end{enumerate}
\end{proposition}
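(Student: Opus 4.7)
My plan for part (i) has two components. For well-definedness: I split the integral defining $\loglap u(x)$ in (\ref{representation-main}) at the ball $B_1(x)$. The near-field piece $\int_{B_1(x)} \frac{u(x)-u(y)}{|x-y|^N}\,dy$ is controlled by passing to polar coordinates centered at $x$ and applying the pointwise bound $|u(x)-u(y)| \le \omega_{u,x,\R^N}(|x-y|)$; this yields the estimate $|S^{N-1}|\int_0^1 \omega_{u,x,\R^N}(r)/r\,dr$, finite by Dini continuity at $x$. The far-field piece $\int_{\R^N\setminus B_1(x)} u(y)|x-y|^{-N}\,dy$ is finite because on its domain, the inequality $1+|y|\le (1+|x|)(1+|x-y|)$ combined with $|x-y|\ge 1$ gives $|x-y|^{-N} \le 2^N(1+|x|)^N(1+|y|)^{-N}$, reducing the integral to a constant multiple of $\|u\|_{L^1_0}$. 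For the alternative formula (\ref{representation-regional}): I split $\R^N = \Omega \cup (\R^N\setminus\Omega)$ in (\ref{representation-main}) and use the elementary rewrites $u(x)1_{B_1(x)}(y) = u(x) - u(x)1_{\R^N\setminus B_1(x)}(y)$ on $\Omega$ and $u(x)1_{B_1(x)}(y) = u(x)1_{B_1(x)\setminus\Omega}(y)$ on $\R^N\setminus\Omega$. Collecting the $u(x)$-coefficients reproduces precisely $c_N u(x) h_\Omega(x)$, delivering (\ref{representation-regional}).

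For part (ii), I would fix $x_0\in\Omega$, choose $\delta_0>0$ with $\overline{B_{2\delta_0}(x_0)}\subset\Omega$, and for $x \in B_{\delta_0}(x_0)$ and $\delta\in(0,\delta_0)$ decompose
$$\loglap u(x) = I_1(x,\delta) + I_2(x,\delta) - [\j * u](x) + \rho_N u(x),$$
where $I_1(x,\delta):=c_N\int_{B_\delta(x)}\frac{u(x)-u(y)}{|x-y|^N}\,dy$ and $I_2(x,\delta):=c_N\int_{B_1(x)\setminus B_\delta(x)}\frac{u(x)-u(y)}{|x-y|^N}\,dy$. Uniform Dini continuity on $\Omega$ yields $|I_1(x,\delta)| \le c_N|S^{N-1}|\int_0^\delta \omega_{u,\Omega}(r)/r\,dr$ uniformly for $x$ near $x_0$, a bound that vanishes as $\delta\to0^+$. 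Rewriting $I_2(x,\delta) = \int_{\R^N} 1_{B_1(x)\setminus B_\delta(x)}(y)\frac{u(x)-u(y)}{|x-y|^N}\,dy$, I apply dominated convergence with the majorant $c_N \delta^{-N}(M+|u(y)|)1_{B_2(x_0)}(y)$ (valid for $x$ near $x_0$, where $M$ is an $L^\infty$-bound on $u$ near $x_0$ coming from uniform Dini continuity); this yields continuity of $I_2(\cdot,\delta)$ at $x_0$. The convolution $\j * u$ is continuous on $\R^N$ by Lemma~\ref{continuity-convolution}, since $|\j(z)|\le 2^N c_N (1+|z|)^{-N}$, and $u$ is continuous at $x_0$ since uniform Dini continuity implies uniform continuity. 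A standard $\varepsilon/3$ argument — first shrink $\delta$ so that $|I_1(x,\delta)|+|I_1(x_0,\delta)|$ is uniformly small, then let $x\to x_0$ in the remaining continuous terms — delivers continuity of $\loglap u$ at $x_0$.

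The main obstacle is managing the nonintegrable singularity of the kernel $\k(z)=c_N 1_{B_1}(z)|z|^{-N}$ at the origin; this is exactly what forces the reliance on Dini continuity, and in part (ii) it demands the bound on $I_1$ be \emph{uniform in $x$} near $x_0$ rather than merely pointwise at $x_0$. A secondary technical subtlety in part (ii) is that the truncation domain $B_1(x)\setminus B_\delta(x)$ in $I_2$ moves with $x$, so continuity of $I_2(\cdot,\delta)$ cannot be asserted by naive continuity-under-the-integral; one must apply dominated convergence to the full integrand, indicator included, relying on the fact that the moving boundaries $\partial B_1(x)$ and $\partial B_\delta(x)$ sweep out null sets so that the pointwise a.e. convergence of the indicator is preserved.
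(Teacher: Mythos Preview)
Your argument is correct and structurally matches the paper's proof. Part (i) is essentially identical: same polar-coordinate bound on the near-field piece, same weighted-$L^1$ bound on the far-field piece, and the same $\Omega$ versus $\R^N\setminus\Omega$ split to extract $h_\Omega$. For part (ii), both you and the paper use the same basic decomposition (singular piece $I_1$ controlled uniformly by the Dini modulus, convolution term $\j*u$ handled by Lemma~\ref{continuity-convolution}, and $\rho_N u$ by continuity of $u$), but the treatments of the intermediate annular integral differ. The paper passes to the variable $z=y-x$ so that the domain $B_1$ is fixed, then splits $f_1(x)-f_1(y)$ into three pieces; this produces a cross term of size $\omega_{u,\Omega}(|x-y|)\log(1/\eps_k)$, which forces them to first prove the auxiliary fact that $\omega_{u,\Omega}(\eps_k)\log(1/\eps_k)\to 0$ along the scale $\eps_k=2^{-2^k}$. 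Your direct dominated-convergence argument on the moving annulus $B_1(x)\setminus B_\delta(x)$ sidesteps that auxiliary step entirely, at the mild cost of having to note that the indicator of the moving annulus converges a.e.\ (boundaries are null sets). Your route is thus marginally more elementary; the paper's route has the advantage of making the quantitative dependence on the modulus of continuity more explicit.
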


\begin{proof}
i) Since
\begin{align*}
\int_{\R^N  }\frac{|u(x)1_{B_1(x)}(y)-u(y)|}{|x-y|^{N} } dy &= \int_{B_1(x)}\frac{|u(x)-u(y)|}{|x-y|^{N} } dy + \int_{\R^N \setminus B_1(x)}
\frac{|u(y)|}{|x-y|^N}\,dy\\
&\le |S^{N-1}| \int_{0}^1 \frac{\omega_{u,x,\Omega}(r)}{r}\,dr + C_x \int_{\R^N}\frac{|u(y)|}{(1+|y|)^N}\,dy < +\infty
\end{align*}
with a constant $C_x>0$ by assumption, it follows that $ \loglap  u(x)$ is well-defined by (\ref{representation-main}). Next, we let $\Omega \subset \R^N$ be an open subset such that $x \in \Omega$. Starting from (\ref{representation-main}), we see that
\begin{align*}
 &\frac{\loglap  u(x) -\rho_N u(x)}{c_N}=  \int_{\R^N  } \frac{ u(x)1_{B_1(x)}(y)-u(y)}{|x-y|^{N} } dy \\
&= \int_{\Omega } \frac{ u(x)1_{B_1(x)}(y)-u(y)}{|x-y|^{N} } dy +  \int_{\R^N \setminus \Omega } \frac{ u(x)1_{B_1(x)}(y)-u(y)}{|x-y|^{N} } dy \\
&= \int_{\Omega } \frac{ u(x)-u(y)}{|x-y|^{N} } dy   - u(x) \int_{\Omega \setminus B_1(x)} \frac{1}{|x-y|^{N} } dy
+ u(x) \int_{B_1(x) \setminus \Omega} \frac{1}{|x-y|^{N} } dy-  \int_{\R^N \setminus \Omega} \frac{u(y)}{|x-y|^{N}}\,dy\\
&= \int_{\Omega } \frac{u(x)-u(y)}{|x-y|^{N} } dy -  \int_{\R^N \setminus \Omega} \frac{u(y)}{|x-y|^{N}}\,dy + \frac{h_{\Omega}(x) u(x)}{c_N}.
\end{align*}
This yields (\ref{representation-regional}).\\[1mm]
ii) We start with a preliminary remark. Let $\eps_k:= 2^{-2^k}$ for $k \in \N_0$. Then we have 
$$\sum_{k=1}^\infty \omega_{u,\Omega}(\eps_k) \log \frac{1}{\eps_k}
= 2 \sum_{k=1}^\infty \omega_{u,\Omega}(\eps_k) \log \frac{\eps_{k-1}}{\eps_k}
\le 2 \sum_{k=1}^\infty \int_{\eps_k}^{\eps_{k-1}}\frac{\omega_{u,\Omega}(r)}{r}\,dr
= 2 \int_{0}^{\frac{1}{2}} \frac{\omega_{u,\Omega}(r)}{r}\,dr <\infty
$$
by assumption. Hence
\begin{equation}
\label{prelim-remark}
\omega_{u,\Omega}(\eps_k) \log \frac{1}{\eps_k} \to 0 \qquad \text{as $k \to \infty.$}
\end{equation}
Next, we write $\loglap u = c_N f_1 + f_2$ with
$$
f_1,f_2: \R^N \to \R,\qquad   f_1(x)= \int_{B_1(x)} \frac{u(x)-u(y)}{|x-y|^{N}} dy,\qquad f_2(x) = -[\j * u](x) + \rho_N u(x).
$$
By Lemma~\ref{continuity-convolution}, we see that $f_2$ is continuous on $\Omega$. To see the continuity of $f_1$ in $\Omega$, we let $x \in \Omega$,
 $r:= \min \{1, \frac{\dist(x,\partial \Omega)}{4}\}$ and $k \in \N$ be chosen such that $\eps_k <r$. For $y \in \Omega$ with $|x-y|<\eps_k$, we then have
\begin{align*}
&|f_1(x)-f_1(y)| = \Bigl|\int_{B_1} \frac{u(x)-u(x+z) -[u(y)-u(y+z)]}{|z|^{N} } dz \Bigr|\\
&\le \int_{B_{\eps_k}}\frac{|u(x)-u(x+z)|+|u(y)-u(y+z)|}{|z|^{N}}dz +\Bigl| \int_{B_1 \setminus B_{\eps_k}}\frac{u(x)-u(y)+u(x+z)-u(y+z)}{|z|^{N}}dz\Bigr|\\
&\le 2 \int_{B_{\eps_k}} \frac{\omega_{u,\Omega}(|z|)}{|z|^{N}}dz + \omega_{u,\Omega}(|x-y|) \int_{B_1 \setminus B_{\eps_k}}\frac{1}{|z|^{N}}dz
+ \Bigl| \int_{B_1 \setminus B_{\eps_k}} \frac{u(x+z)-u(y+z)}{|z|^{N}}dz\Bigr|
\\
&\le |S^{N-1}|\delta_k +\Bigl|\int_{B_1 \setminus B_{\eps_k}}\frac{u(x+z)-u(y+z)}{|z|^{N}}dz\Bigr|
\end{align*}
with $\delta_k := 2\int_{0}^{\eps_k} \frac{\omega_{u,\Omega}(\tau)}{\tau}\,d\tau  + \omega_{u,\Omega}(\eps_k) \log \frac{1}{\eps_k}$.
We note that $\delta_k \to 0$ by assumption and (\ref{prelim-remark}). Moreover, defining
$$
v_k: \R^N \to \R,\qquad v_k(z)= 1_{B_1 \setminus B_{\eps_k}}(z) |z|^{-N},
$$
we see by Lemma~\ref{continuity-convolution} that
$$
\Bigl|\int_{B_1 \setminus B_{\eps_k}}\frac{u(x+z)-u(y+z)}{|z|^{N}}dz\Bigr|= \bigl|[v_k* u](x)-[v_k* u](y)\bigr|
\to 0 \qquad \text{as $y \to x$}
$$
for every $k \in \N$. We thus conclude that
$$
\limsup_{y \to x}\bigl|f_1(x)-f_1(y)\bigr| \le |S^{N-1}|\delta_k \qquad \text{for every $k \in \N$,}
$$
and this implies that $\lim \limits_{y \to x}|f_1(x)-f_1(y)|= 0$. Hence $f_1$ is continuous in $x$, and so is $\loglap u$.
\end{proof}

\section{Functional analytic framework for the Dirichlet problem}
\label{sec:funct-analyt-fram}

In this section, we set up the functional analytic framework for Dirichlet problems related to $\loglap$ in weak sense.
Throughout this section, let $\Omega$ be a bounded domain. Using (\ref{eq:representation-main-rewritten}), we observe that
\begin{equation}
\int_{\R^N}[\loglap u]v dx =\frac{1}2 \int_{\R^N} \int_{\R^N}(u(x)-u(y))(v(x)-v(y)) \k(x-y)dx dy -  \int_{\R^N}
\bigl(\j * u - \rho_N u\bigr)v\,dx
\label{eq 1.2.2}
\end{equation}
for uniformly Dini continuous functions $u,v \in C_c(\R^N)$ with the kernel functions $\k,\j$ defined in (\ref{eq:def-k}) and (\ref{eq:def-j}). We let $\cH(\Omega)$ denote the space of all measurable functions $u:\R^N\to \R$ with $u \equiv 0$ on $\R^N \setminus \Omega$ and
$$
\int_{\R^N} \int_{\R^N} (u(x)-u(y))^2 \k(x-y) dx dy <+\infty.
$$
Then $\cH(\Omega)$ contains the space of all uniformly Dini continuous functions $u: \R^N \to \R$
with $u \equiv 0$ on $\R^N \setminus \Omega$.
On $\cH(\Omega)$, we define the inner product
$$
\mathcal{E}(u,w)=\frac{1}2 \int_{\R^N}\int_{\R^N} (u(x)-u(y))(w(x)-w(y))\k(x-y)dx dy
$$
and the induced norm $\norm{u}_{\cH(\Omega)}=\sqrt{\mathcal{E}(u,u)}$.
By \cite[Lemma 2.7]{FKV13}, we have that
\begin{equation}
  \label{eq:FKV-eq}
\inf_{u\in \cH(\Omega)}\frac{\mathcal{E}(u,u)}{\|u\|_{L^2(\Omega)}^2}>0,
\end{equation}
and from this it can be deduced that $\cH(\Omega) \subset L^2(\Omega)$ is a Hilbert space. Moreover,
\begin{equation}
  \label{eq:embedding-compact}
\text{the embedding $\cH(\Omega) \hookrightarrow L^2(\Omega)$ is compact}
\end{equation}
by \cite[Theorem 2.1]{CP}. We note that, for $u \in \cH(\Omega)$, we have
\begin{equation}
\cE(u,u)= \frac{1} 2 \int_{\Omega} \int_{\Omega}(u(x)-u(y))^2\k(x-y) dx dy +  \int_\Omega \kappa_{\Omega}(x) u^2(x) \,dx
  \label{eq:alternative-representation-0}
\end{equation}
with
\begin{equation}
  \label{eq:def-kappa-omega}
\kappa_{\Omega}(x) = \int_{\R^N \setminus \Omega}\k(x-y)\,dy = c_N \int_{B_1(x)\setminus \Omega} \frac{1}{|x-y|^N}dy.
\end{equation}
The function $\kappa_\Omega: \Omega \to \R$ is usually called the {\em Killing measure} associated with the kernel $\k$.
For $x \in \Omega$ with $r:= \rho(x)= \dist(x,\partial \Omega) < 1$, we have
\begin{equation}
  \label{eq:upper-estimate-kappa-omega}
\kappa_\Omega(x) \le c_N \int_{B_1(x) \setminus B_r(x)}|x-y|^{-N}\,dy =  c_N |S^{N-1}| \log \frac{1}{r} =
2 \log \frac{1}{\rho(x)}.
\end{equation}
Moreover, if $\partial \Omega$ is Lipschitz, then a standard computation also gives the lower bound
\begin{equation}
  \label{eq:lower-estimate-kappa-omega}
\kappa_\Omega(x) \ge C_\Omega \log \frac{1}{\rho(x)} \qquad \text{for $x \in \Omega$ with a constant $C_\Omega>0$.}
\end{equation}
A more precise estimate is available if $\Omega$ has a uniform $C^2$-boundary, see Remark~\ref{remark-est-killing-measure} below.
The following density result will be useful in the sequel.

\begin{theorem}\label{density}
Let $\Omega \subset \R^N$ be a bounded Lipschitz domain. Then $C^\infty_c(\Omega)$ is dense in $\cH(\Omega)$. Moreover, if $u \in \cH(\Omega)$ is nonnegative, we have:
\begin{itemize}
\item[(i)] There exists a sequence of functions $u_n \in \cH(\Omega)$ with compact support in $\Omega$, $0 \le u_n \le u_{n+1} \le u$ for all $n \in \N$ and
 $u_n \to u$ in $\cH(\Omega)$ as $n\to+\infty$.
\item[(ii)] There exists a sequence of nonnegative functions $u_n \in C^\infty_c(\Omega)$, $n \in \N$ with $u_n \to u$ in $\cH(\Omega)$ as $n\to+\infty$.
\end{itemize}
\end{theorem}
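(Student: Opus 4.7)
My plan is to prove (i) first by an explicit distance-based cutoff, deduce (ii) from (i) by mollification, and then obtain the overall density by decomposing a general $u\in\cH(\Omega)$ as $u^{+}-u^{-}$.

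For (i), given $u \ge 0$ in $\cH(\Omega)$, pick smooth cutoffs $\phi_n \in C^\infty_c(\Omega)$ with $\phi_n \le \phi_{n+1} \le 1$, $\phi_n \equiv 1$ on $\{\rho \ge 2/n\}$, $\phi_n \equiv 0$ on $\{\rho \le 1/n\}$, and Lipschitz constant $O(n)$; set $u_n := u\phi_n$. Compact support of $u_n$ in $\Omega$ and the monotonicity $u_n \le u_{n+1} \le u$ are immediate from $u \ge 0$. The pointwise bound
$$
(u\psi_n(x) - u\psi_n(y))^2 \le 2\psi_n(x)^2(u(x)-u(y))^2 + 2 u(y)^2 (\psi_n(x)-\psi_n(y))^2, \qquad \psi_n := 1-\phi_n,
$$
reduces the convergence $\mathcal{E}(u-u_n, u-u_n) \to 0$ to two checks. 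The first integral $\int\!\!\int \psi_n(x)^2 (u(x)-u(y))^2 \k(x-y)\,dx\,dy$ vanishes by dominated convergence, since $\psi_n \to 0$ pointwise and $(u(x)-u(y))^2\k(x-y)$ is integrable. The second piece $\int_\Omega u(y)^2 A_n(y)\,dy$, with $A_n(y) := \int (\psi_n(x)-\psi_n(y))^2 \k(x-y)\,dx$, is treated by splitting at $|x-y|=1/n$: using the $O(n)$-Lipschitz bound for $\psi_n$ on the near region $\{|x-y|\le 1/n\}$ and the inclusion $\{\psi_n \ne 0\}\subset\{\rho\le 2/n\}$ on the far region, one obtains $A_n(y)\to 0$ pointwise for every $y\in\Omega$ and a uniform domination $A_n(y)\le C(1+\kappa_\Omega(y))$. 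Since $\int_\Omega u^2 \kappa_\Omega\,dy<\infty$ for $u\in\cH(\Omega)$ by the decomposition~\eqref{eq:alternative-representation-0}, dominated convergence concludes (i).

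For (ii), take $u_n$ from (i) and let $\eta_\varepsilon$ be a standard non-negative mollifier; for $\varepsilon<\mathrm{dist}(\mathrm{supp}\,u_n,\partial\Omega)$, the convolution $v_{n,\varepsilon} := u_n*\eta_\varepsilon$ lies in $C^\infty_c(\Omega)$ and is non-negative. Continuity of translation in the weighted double-integral space $L^2(\k(x-y)\,dx\,dy)$, combined with Fubini applied to $v_{n,\varepsilon}(x)-v_{n,\varepsilon}(y) = \int \eta_\varepsilon(z)[u_n(x-z)-u_n(y-z)]\,dz$, gives $\mathcal{E}(v_{n,\varepsilon}-u_n, v_{n,\varepsilon}-u_n)\to 0$ as $\varepsilon\to 0^+$; a diagonal extraction yields (ii). For the main density statement, write an arbitrary $u\in\cH(\Omega)$ as $u=u^+ - u^-$; the Stampacchia chain rule $|u^\pm(x)-u^\pm(y)|\le|u(x)-u(y)|$ together with $u^\pm \equiv 0$ on $\R^N\setminus\Omega$ shows $u^\pm\in\cH(\Omega)$, so applying (ii) to each of $u^+,u^-$ produces $v_n^\pm\in C^\infty_c(\Omega)$ with $v_n^\pm\to u^\pm$, and $v_n := v_n^+-v_n^-$ converges to $u$ in $\cH(\Omega)$.

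The main obstacle lies in controlling the term $\int_\Omega u^2 A_n\,dy$ in step (i): the cutoff $\psi_n$ develops Lipschitz constant $\sim n$ on the shrinking strip $\{1/n<\rho<2/n\}$, and a priori the singular kernel $\k$ could amplify this to generate a non-vanishing boundary contribution. It is precisely here that the Lipschitz regularity of $\partial\Omega$ is essential, through the lower estimate $\kappa_\Omega(y)\ge C_\Omega\log(1/\rho(y))$ of~\eqref{eq:lower-estimate-kappa-omega}: this is what guarantees that the dominating function $u^2(1+\kappa_\Omega)$ is integrable on $\Omega$ for every $u\in\cH(\Omega)$, making the dominated convergence argument succeed.
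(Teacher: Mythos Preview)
Your proposal is correct and follows essentially the same route as the paper: a distance-based cutoff whose remainder is controlled via the finiteness of $\int_\Omega u^2\kappa_\Omega\,dx$ (from~\eqref{eq:alternative-representation-0}) together with the Lipschitz lower bound~\eqref{eq:lower-estimate-kappa-omega}, followed by mollification for (ii). One small clarification on your last paragraph: the integrability of the dominating function $u^2(1+\kappa_\Omega)$ already follows from~\eqref{eq:alternative-representation-0} for \emph{any} bounded domain; the Lipschitz hypothesis is needed earlier, to obtain the domination $A_n(y)\le C(1+\kappa_\Omega(y))$ itself, since the direct estimate on $A_n$ produces a $\log(1/\rho(y))$ term that only~\eqref{eq:lower-estimate-kappa-omega} converts into $\kappa_\Omega(y)$.
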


\begin{proof}
Since for $u \in \cH(\Omega)$ we have $u^\pm \in \cH(\Omega)$, it suffices to consider a nonnegative function $u \in \cH(\Omega)$ and to prove (i) and (ii).\\
(i) For $r>0$, we define the Lipschitz function
$$
\phi_r: \R^N \to \R, \qquad \phi_r(x) = \left \{
  \begin{aligned}
   &0,&&\quad \quad  \rho(x) \ge 2r,\\
   &2 - \frac{\rho(x) }{r} ,&&\quad \quad  r \le \rho(x) \le 2r,\\
   &1,&& \quad \quad \rho(x) \le r.
  \end{aligned}
\right.
$$
Clearly, we then have that $\phi_s \le \phi_r$ whenever $0 < s \le r$. We shall show that
\begin{equation}
  \label{eq:u-phi-r-claim}
u \phi_r \in \cH(\Omega)\; \text{for $r>0$ sufficiently small}\qquad \text{and}\qquad \|u \phi_r\|_{\cH(\Omega)} \to 0\quad \text{as $r \to 0$.}
\end{equation}
As a consequence, we then have $u(1-\phi_r) \in \cH(\Omega)$ for $r>0$ sufficiently small and $u(1-\phi_r) \to u$ in $\cH(\Omega)$ as $r \to 0$. We also note that $0 \le u(1-\phi_r) \le u(1-\phi_s) \le u$ for $0<s \le r$, and that $u(1-\phi_r)$ has compact support in $\Omega$. Hence the claim follows by choosing $u_n:= u (1-\phi_{r_n})$ for a decreasing sequence of numbers $r_n>0$ with $\lim \limits_{n \to \infty}r_n = 0$.

To prove (\ref{eq:u-phi-r-claim}), we consider constants $C>0$ which may change from line to line. Let $A_t:= \{x \in \Omega\:: \rho(x) \le t\}$ for $t >0$. Since $u \phi_r$ vanishes in $\R^N \setminus A_{2r}$, $0 \le \phi \le 1$ on $\R^N$ and
$$
|{\phi_r}(x)-{\phi_r}(y)| \le \min \{\frac{|x-y|}{r},1\} \qquad \text{for $x,y \in \R^N$,}
$$
we have that for $r\in(0,\frac14)$
	\begin{align*}
	&\|{\phi_r} u\|^2_{\cH(\Omega)} =\cE(\phi_r u, \phi_r u)= \frac{1}{2}\int_{\R^N}\int_{\R^N} [u(x){\phi_r}(x)-u(y){\phi_r}(y)]^2\k(x-y)\ dydx\\
        &= \frac{1}{2}\int_{A_{4r}}\int_{A_{4r}} [u(x){\phi_r}(x)-u(y){\phi_r}(y)]^2\k(x-y)\ dydx +\int_{A_{2r}} u(x)^2 {\phi_r}(x)^2 \int_{\R^N \setminus A_{4r}} \k(x-y)\ dydx\\
&\le \frac{1}{2}\int_{A_{4r}}\int_{A_{4r}} \bigl[u(x)\bigl({\phi_r}(x)-{\phi_r}(y)\bigr)+ {\phi_r}(y)\bigl(u(x)-u(y)\bigr)\bigr]^2\k(x-y)\ dydx\\
&\quad+ C  \int_{A_{2r}} u(x)^2  \log \frac{1}{\dist(x,\R^N \setminus A_{4r})}dx\\
&\le \int_{A_{4r}} u^2(x) \int_{A_{4r}}({\phi_r}(x)-{\phi_r}(y))^2 \k(x-y)\ dydx
+ \int_{A_{4r}}\int_{A_{4r}}(u(x)-u(y))^2 \k(x-y)\ dydx \\
&\quad+ C\int_{A_{2r}} u(x)^2 \log \frac{1}{\rho(x)}dx
%	\end{align*}
%	\begin{align*}
\\&\le \frac{C}{r^2} \int_{A_{4r}} u^2(x)\int_{B_r(x)} |x-y|^{2-N} dydx + C \int_{A_{4r}} u^2(x)\int_{B_1(x) \setminus B_r(x)} |x-y|^{-N} dydx\\
&\quad+ \frac{1}{2}\int_{A_{4r}}\int_{A_{4r}}(u(x)-u(y))^2 \k(x-y)\ dydx + C\int_{A_{2r}} u(x)^2 \log \frac{1}{\rho(x)}dx
	\end{align*}
	\begin{align*}
&\le C \int_{A_{4r}}\Bigl(1+ \log \frac{1}{r} \Bigr) u^2(x) dx
+  \frac{1}{2}\int_{A_{4r}}\int_{A_{4r}}(u(x)-u(y))^2 \k(x-y)\ dydx \qquad\qquad\qquad\qquad\\
&\quad+ C\int_{A_{2r}} u(x)^2 \log \frac{1}{\rho(x)}dx\\
&\le C \int_{A_{4r}}u^2(x) \log \frac{1}{\rho(x)}dx
+ \frac{1}{2}\int_{A_{4r}}\int_{A_{4r}}(u(x)-u(y))^2 \k(x-y)\ dydx.
	\end{align*}
Next we note that
$$
\int_{\Omega }u^2(x) \kappa_\Omega(x)dx <+\infty
$$
by (\ref{eq:alternative-representation-0}), and therefore
$$
\int_{A_{4r}}u^2(x) \log \frac{1}{\rho(x)}dx  \to 0 \qquad \text{as $r \to 0$}
$$
by (\ref{eq:lower-estimate-kappa-omega}). Here we used the fact that $\Omega$ has a Lipschitz boundary.
Moreover, since $\cE(u,u)< +\infty$, we have
$$
\int_{A_{4r}}\int_{A_{4r}}(u(x)-u(y))^2 \k(x-y)\ dydx \to 0 \qquad \text{as $r \to 0$.}
$$
We thus obtain (\ref{eq:u-phi-r-claim}), and this finishes the proof of (i).\\[0.5mm]
(ii) By (i), we may assume that $u \in \cH(\Omega)$ is nonnegative function with compact support in $\Omega$.
Let $u_\eps: \R^N \to \R$ denote the usual mollification of $u$ given by
$$
u_{\epsilon}:=\eta_{\epsilon}\ast u=\int_{\R^N}\eta_{\epsilon}(z)u(\cdot-z)\ dz,
$$
where $\eta_0 \in C^\infty_c(\R^N)$ is a nonnegative radial function supported in $B_1$ with $\int_{\R^N}\eta_0(x)\,dx = 1$ and
$\eta_{\epsilon}(x)=\epsilon^{-N}\eta(\frac{x}{\eps})$ for $x \in \R^N$, $\eps>0$. By \cite[Proposition 4.1]{JW-preprint}, we then have that $u_\eps \in C_c^\infty(\Omega)$ for $\eps>0$ sufficiently small and
$$
\lim_{\eps \to 0} \|u-u_\eps\|_{\cH(\Omega)}^2 = \lim_{\eps \to 0} \cE(u-u_\eps,u-u_\eps) =0.
$$
Hence it suffices to choose $u_n:= u_{\eps_n}$ for a sequence $\eps_n \to 0^+$. The proof is thus finished.
\end{proof}

The Hilbert space $\cH(\Omega)$ provides the appropriate framework to study the Poisson problem
\begin{equation}\label{eq 3.1-poisson}
\left\{ \arraycolsep=1pt
\begin{array}{lll}
 \loglap u=f\quad \  {\rm in}\quad   \Omega,\\[2mm]
 \phantom{ \loglap   }
  u=0\quad \ {\rm{in}}\  \quad \R^N\setminus \Omega
\end{array}
\right.
\end{equation}
with given $f \in L^2(\Omega)$ in weak sense. For this it is convenient to introduce the quadratic form
$$
\cE_L: \cH(\Omega) \times \cH(\Omega) \to \R, \qquad \cE_L(u,w)= \mathcal{E}(u,w) - \int_{\R^N} \bigl( \j * u  -\rho_N u \bigr)w\,dx.
$$
Here we note that, since $\j \in L^r(\R^N)$ for $r>1$ and $\Omega$ is bounded, 
\begin{equation}
  \label{eq:continuous-L-2-weak-L-1}
\text{the map $L^2(\R^N) \to L^2(\Omega)$, $u \mapsto \j * u$ is well-defined and continuous,}
\end{equation}
and thus $\cE_L$ is well-defined. Theorem~\ref{density} in particular implies that, if $\Omega \subset \R^N$ is a bounded Lipschitz domain, then the space of uniformly Dini continuous functions $u \in C_c(\Omega)$ is dense in $\cH(\Omega)$. It thus follows from (\ref{eq 1.2.2}) that
\begin{equation}
  \label{eq:simple-integration-by-parts}
\int_{\R^N}  [\loglap  u] v dx = \mathcal{E}_L(u,w)
\end{equation}
for every uniformly Dini continuous function $u \in C_c(\R^N)$ and every $v \in \cH(\Omega)$. Now, by definition, a function $u \in \cH(\Omega)$ is a weak solution of (\ref{eq 3.1-poisson}) if
$$
\cE_L(u,w) = \int_{\Omega} f w\,dx  \qquad \text{for all $w \in \cH(\Omega)$.}
$$
Since $\cE_L$ contains competing nonlocal terms of different sign, at first glance it appears difficult to compare the values of $\cE_L(u,u)$ and $\cE_L(|u|,|u|)$ for $u \in \cH(\Omega)$. For this, an alternative representation of $\cE_L$ will turn out to be useful.

\begin{proposition}
\label{E-L-alternative-representation}
For $u \in \cH(\Omega)$, we have
\begin{equation}
  \label{eq:alternative-representation}
\cE_L(u,u) = \frac{c_N} 2 \int_{\Omega} \int_{\Omega}\frac{ [u(x)-u(y)]^2}{|x-y|^N}dx dy +  \int_\Omega [h_{\Omega}(x)+\rho_N] u^2(x) \,dx
\end{equation}
with $h_\Omega$ given in (\ref{eq:def-h-Omega}). Moreover, if $\partial \Omega$ is Lipschitz, we have
\begin{equation}
  \label{eq:lower-estimate-h-omega}
h_\Omega(x) \ge C_\Omega \log \frac{1}{\rho(x)}-\tilde C_\Omega \qquad \text{for $x \in \Omega$ with constants $C_\Omega,\, \tilde C_\Omega >0$.}
\end{equation}
\end{proposition}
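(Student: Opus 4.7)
The plan is to unpack the definition of $\cE_L(u,u)$ and combine the two nonlocal pieces so that the kernels $\k$ and $\j$ merge into the single kernel $c_N|x-y|^{-N}$, with all remaining pointwise terms collected into a single coefficient of $u^2(x)$.

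First I would start from
$$\cE_L(u,u) = \cE(u,u) - \int_{\R^N} (\j*u) u\,dx + \rho_N \int_{\R^N} u^2\,dx$$
and insert the alternative representation (\ref{eq:alternative-representation-0}) for $\cE(u,u)$, using $u \equiv 0$ off $\Omega$, which already produces the double integral over $\Omega \times \Omega$ with kernel $\k$ and a $u^2$ term with coefficient $\kappa_\Omega(x) = c_N \int_{B_1(x)\setminus\Omega}|x-y|^{-N}dy$.

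Next I would symmetrize the $\j$-convolution term: since $u \equiv 0$ outside $\Omega$ and $\j$ is even, Fubini gives $\int_{\R^N} (\j*u)u\,dx = \int_\Omega\int_\Omega \j(x-y)u(x)u(y)\,dx\,dy$. Applying the polarization identity $2u(x)u(y) = u(x)^2 + u(y)^2 - [u(x)-u(y)]^2$ and using symmetry, this becomes
$$\int_\Omega u^2(x)\Bigl(c_N\int_{\Omega\setminus B_1(x)}|x-y|^{-N}dy\Bigr)dx - \tfrac12\int_\Omega\int_\Omega \j(x-y)[u(x)-u(y)]^2\,dx\,dy;$$
the finiteness of each piece follows from $\int_\Omega \j(x-y)\,dy \le c_N|\Omega|$ together with $u \in L^2(\Omega)$ and $\cE(u,u)<\infty$. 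Substituting back into $\cE_L(u,u)$, the two double integrals combine via $\k(x-y)+\j(x-y) = c_N|x-y|^{-N}$, and the coefficient of $u^2(x)$ becomes
$$\kappa_\Omega(x) - c_N\int_{\Omega\setminus B_1(x)}|x-y|^{-N}dy + \rho_N = h_\Omega(x)+\rho_N,$$
which is exactly (\ref{eq:alternative-representation}).

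For the lower bound on $h_\Omega$, I would split $h_\Omega = \kappa_\Omega - g_\Omega$ with $g_\Omega(x):= c_N\int_{\Omega\setminus B_1(x)}|x-y|^{-N}dy$. The Lipschitz bound (\ref{eq:lower-estimate-kappa-omega}) gives $\kappa_\Omega(x) \ge C_\Omega \log(1/\rho(x))$, while the trivial estimate $|x-y|^{-N} \le 1$ on the integration domain of $g_\Omega$ yields $g_\Omega(x) \le c_N|\Omega|$. Setting $\tilde C_\Omega := c_N|\Omega|$ gives (\ref{eq:lower-estimate-h-omega}).

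The only step that requires real care is the integrability bookkeeping in the polarization step — one must verify that the two halves of the split are each finite before equating them, since only the sum is a priori controlled by $\|u\|_{L^2}$ and $\cE(u,u)$; however, the uniform bound $\int_\Omega \j(x-y)\,dy \le c_N|\Omega|$ (coming from the fact that $\j$ is bounded off the diagonal after the $|x-y|\ge 1$ cutoff) makes each term trivially controlled by $\|u\|_{L^2(\Omega)}^2$, so no genuine obstacle arises.
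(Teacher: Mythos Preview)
Your proof is correct and follows essentially the same route as the paper: expand $\cE(u,u)$ via (\ref{eq:alternative-representation-0}), rewrite the $\j$-convolution over $\Omega\times\Omega$ using the polarization identity $2u(x)u(y)=u(x)^2+u(y)^2-[u(x)-u(y)]^2$, and then merge the $\k$- and $\j$-double integrals into the single kernel $c_N|x-y|^{-N}$ while the boundary terms assemble into $h_\Omega$. Your treatment of (\ref{eq:lower-estimate-h-omega}) via $h_\Omega=\kappa_\Omega-g_\Omega$ with $g_\Omega\le c_N|\Omega|$ is exactly what the paper means by ``direct consequence of (\ref{eq:lower-estimate-kappa-omega})'', and your explicit remark on the integrability of each piece in the polarization step is a welcome addition that the paper leaves implicit.
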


\begin{proof}
We have
  \begin{align*}
 &\cE_L(u,u)-\rho_N \int_{\Omega}u^2(x)\,dx  = \cE(u,u) -\int_{\R^N} [\j* u] u \,dx\\
&= \frac{c_N}2 \int\int_{\stackrel{x,y \in \Omega}{|x-y|<1}}\frac{ [u(x)-u(y)]^2}{|x-y|^N}dx dy
+ c_N \int_\Omega u^2(x) \int_{B_1(x)\setminus \Omega} \frac{1}{|x-y|^N}dydx\\
&\quad-c_N\int \!\!\!\! \int_{|x-y|\ge1}\frac{u(x)u(y)}{|x-y|^N}dy dx\\
&= \frac{c_N}2 \int\int_{\stackrel{x,y \in \Omega}{|x-y|<1}}\frac{ [u(x)-u(y)]^2}{|x-y|^N}dx dy
+ c_N \int_\Omega u^2(x) \int_{B_1(x)\setminus \Omega} \frac{1}{|x-y|^N}dydx\\
&\quad+\frac{c_N}{2}\int \!\!\!\! \int_{\stackrel{x,y \in \Omega}{|x-y|\ge1}}\frac{(u(x)-u(y))^2}{|x-y|^N}dy dx -
c_N \int \!\!\!\! \int_{\stackrel{x,y \in \Omega}{|x-y|\ge1}}\frac{u^2(x)}{|x-y|^N}dy dx\\
&= \frac{c_N}2 \int_{\Omega} \int_{\Omega}\frac{ [u(x)-u(y)]^2}{|x-y|^N}dx dy + c_N \int_\Omega u^2(x) \Bigl[\int_{B_1(x)\setminus \Omega} \frac{1}{|x-y|^N}dy - \int_{\Omega \setminus B_1(x)} \frac{1}{|x-y|^N}dy\Bigr]dx\\
&= \frac{c_N} 2 \int_{\Omega} \int_{\Omega}\frac{ [u(x)-u(y)]^2}{|x-y|^N}dx dy + c_N \int_\Omega u^2(x) h_{\Omega}(x)\,dx.
\end{align*}
Moreover, (\ref{eq:lower-estimate-h-omega}) is a direct consequence of (\ref{eq:lower-estimate-kappa-omega}).
\end{proof}

The following is a rather immediate corollary of Proposition~\ref{E-L-alternative-representation}.

\begin{lemma}
\label{sec:funct-analyt-fram-1-modulus}
 For $u \in \cH(\Omega)$, we also have $|u| \in \cH(\Omega)$, and
\begin{equation}
  \label{eq:modulus-invariance}
\cE_L(|u|,|u|)\le \cE_L(u,u).
\end{equation}
Moreover, equality holds in (\ref{eq:modulus-invariance}) if and only if $u$ does not change sign.
\end{lemma}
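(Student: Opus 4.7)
The plan is to leverage the alternative representation of $\cE_L$ furnished by Proposition~\ref{E-L-alternative-representation}, which decouples the bilinear form into a double integral over $\Omega \times \Omega$ with a nonnegative integrand and a weighted $L^2$-mass term $\int_\Omega [h_\Omega(x) + \rho_N] u^2(x)\,dx$ depending only on $u^2$. The fact that $|u|^2 = u^2$ pointwise then makes this latter term invariant under the replacement $u \leftrightarrow |u|$, so the comparison reduces to a pointwise estimate on the double-integral kernel.

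First I would verify that $|u| \in \cH(\Omega)$. Clearly $|u| \equiv 0$ on $\R^N \setminus \Omega$ since $u$ does, and the standard pointwise inequality $\bigl||u|(x) - |u|(y)\bigr| \le |u(x)-u(y)|$ implies
\[
\cE(|u|,|u|) = \frac{1}{2}\int_{\R^N}\int_{\R^N} \bigl(|u|(x) - |u|(y)\bigr)^2 \k(x-y)\,dx\,dy \le \cE(u,u) < \infty,
\]
so $|u| \in \cH(\Omega)$ as required.

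Next, applying Proposition~\ref{E-L-alternative-representation} to both $u$ and $|u|$ and subtracting, the zeroth-order terms cancel and I obtain
\[
\cE_L(u,u) - \cE_L(|u|,|u|) = \frac{c_N}{2}\int_\Omega \int_\Omega \frac{[u(x)-u(y)]^2 - [|u|(x)-|u|(y)]^2}{|x-y|^N}\,dx\,dy.
\]
The pointwise identity $[u(x)-u(y)]^2 - [|u(x)|-|u(y)|]^2 = 2\bigl(|u(x)u(y)| - u(x)u(y)\bigr) \ge 0$ shows the integrand is nonnegative, whence $\cE_L(|u|,|u|) \le \cE_L(u,u)$.

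For the equality case, the same identity shows that equality holds in~\eqref{eq:modulus-invariance} if and only if $|u(x)u(y)| = u(x)u(y)$ for a.e.\ $(x,y) \in \Omega\times\Omega$, i.e.\ $u(x)u(y) \ge 0$ for a.e.\ $(x,y) \in \Omega\times\Omega$. By Fubini, this is equivalent to the existence of a measurable set $A \subset \Omega$ with $u \ge 0$ a.e.\ on $A$ and $u \le 0$ a.e.\ on $\Omega \setminus A$, where either $\{u>0\}$ or $\{u<0\}$ has measure zero; in other words, $u$ does not change sign. No step here is truly a technical obstacle; the argument is essentially the classical Beurling--Deny first criterion adapted to the present bilinear form, and the only delicate point is to correctly identify the contribution of $h_\Omega(x)+\rho_N$ as a pure $L^2$-mass term that is insensitive to the sign of $u$, which is exactly what Proposition~\ref{E-L-alternative-representation} provides.
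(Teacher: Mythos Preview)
Your proof is correct and follows exactly the approach the paper intends: the paper states the lemma as ``a rather immediate corollary of Proposition~\ref{E-L-alternative-representation}'' without giving details, and you have supplied precisely those details---using the alternative representation to isolate the sign-insensitive mass term and reducing the inequality to the pointwise identity $[u(x)-u(y)]^2 - [|u(x)|-|u(y)|]^2 = 2(|u(x)u(y)|-u(x)u(y))\ge 0$. Your treatment of the equality case via $|\{u>0\}|\cdot|\{u<0\}|=0$ is also correct, if slightly over-explained.
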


Our next aim is to study the eigenvalue problem
\begin{equation}\label{eq 3.1}
\left\{ \arraycolsep=1pt
\begin{array}{lll}
 \loglap u=\lambda  u\quad \  {\rm in}\quad   \Omega,\\[2mm]
 \phantom{ \loglap   }
  u=0\quad \ {\rm{in}}\  \quad \R^N\setminus \Omega.
\end{array}
\right.
\end{equation}
We consider corresponding eigenfunctions in weak sense, i.e., as weak solutions of (\ref{eq 3.1-poisson}) with $f = \lambda u$. We restate Theorem~\ref{pr eigen 1} from the introduction for the reader's convenience.

\begin{theorem}\label{pr eigen 1-section}
Let $\Omega$ be a bounded domain  in $\R^N$.  Then problem (\ref{eq 3.1-main-results}) admits a sequence of eigenvalues
$$
\lambda_1^L(\Omega)<\lambda_2^L(\Omega)\le \cdots\le \lambda_k^L(\Omega)\le \lambda_{k+1}^L(\Omega)\le \cdots
$$
and corresponding eigenfunctions $\xi_k$, $k \in \N$ such that the following holds:
\begin{enumerate}
\item[(i)] $\lambda_{k}^L(\Omega)=\min \{\cE_L(u,u) \::\:  u\in \cH_k(\Omega)\::\: \norm{u}_{L^2(\Omega)}=1\}$, where
$$
\cH_1(\Omega):= \cH(\Omega)\quad \text{and}\quad  \cH_k(\Omega):=\{u\in\cH(\Omega)\::\: \text{$\int_{\Omega} u \xi_i \,dx =0$ for $i=1,\dots k-1$}\}\quad \text{for $k>1$.}
$$
\item[(ii)] $\{\xi_k\::\: k \in \N\}$ is an orthonormal basis of $L^2(\Omega)$.
\item[(iii)] $\xi_1$ is strictly positive in $\Omega$. Moreover, $\lambda_1^L(\Omega)$ is simple, i.e., if $u \in \cH(\Omega)$ satisfies (\ref{eq 3.1}) in weak sense with $\lambda = \lambda_1^L(\Omega)$, then $u=t\xi_1$ for some $t\in\R$.
\item[(iv)] $\lim \limits_{k \to \infty} \lambda_k^L(\Omega)=+\infty$.
\end{enumerate}
\end{theorem}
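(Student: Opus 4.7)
The argument follows the classical variational framework for the Dirichlet spectrum of a symmetric, coercive, compactly embedded quadratic form. The ingredients already established in this section that make it work are: the compact embedding $\cH(\Omega) \hookrightarrow L^2(\Omega)$ in~(\ref{eq:embedding-compact}); the $L^2$-continuity of the convolution $u \mapsto \j * u$ in (\ref{eq:continuous-L-2-weak-L-1}); and the Poincaré-type bound (\ref{eq:FKV-eq}). In combination, these give $|\cE_L(u,u)-\cE(u,u)| \le C \|u\|_{L^2(\Omega)}^2$ on $\cH(\Omega)$, so $\cE_L$ is bounded from below on $\{u \in \cH(\Omega)\::\: \|u\|_{L^2(\Omega)}=1\}$ and any $L^2$-normalized minimizing sequence for $\cE_L$ is automatically bounded in $\cH(\Omega)$.

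To construct the eigenpairs $(\lambda_k^L, \xi_k)$, I would iterate the direct method. For $k=1$ take such a minimizing sequence, extract a weak $\cH(\Omega)$-limit, upgrade to strong $L^2$-convergence by (\ref{eq:embedding-compact}), and pass to the limit using the weak lower semicontinuity of $\cE$ and the $L^2$-continuity of the lower-order terms in $\cE_L$. A Lagrange multiplier computation then gives the weak eigenvalue identity $\cE_L(\xi_1,\phi)=\lambda_1^L \int_\Omega \xi_1 \phi\,dx$ for all $\phi \in \cH(\Omega)$. For $k \ge 2$, minimize $\cE_L$ over the closed subspace $\cH_k(\Omega)$; the same compactness argument produces a minimizer $\xi_k$, and testing the Euler--Lagrange equation against each $\xi_i$, $i<k$, shows that the multipliers enforcing orthogonality vanish, so the clean weak equation holds. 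This proves (i).

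For (iv), if the sequence $\{\lambda_k^L\}$ were bounded, the coercivity above would make $\{\xi_k\}$ bounded in $\cH(\Omega)$, hence relatively compact in $L^2(\Omega)$ by (\ref{eq:embedding-compact}), which contradicts $\|\xi_k-\xi_j\|_{L^2}^2=2$ for $k\neq j$. For (ii), completeness then follows from the abstract spectral theorem: adding a large multiple of the $L^2$-inner product makes $\cE_L$ a closed, coercive, symmetric form on $\cH(\Omega)$; its associated self-adjoint operator has compact resolvent by (\ref{eq:embedding-compact}), hence a complete orthonormal system of eigenfunctions in $L^2(\Omega)$, and the min-max principle identifies these with the $\xi_k$ constructed above.

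For (iii), nonnegativity of $\xi_1$ is immediate from Lemma~\ref{sec:funct-analyt-fram-1-modulus}. Suppose now $u,v \in \cH(\Omega)$ are two $L^2$-orthonormal nonnegative eigenfunctions at $\lambda_1^L$: then $\int_\Omega uv\,dx=0$ together with $u,v\ge 0$ forces $uv=0$ a.e., so their supports are essentially disjoint, and $w:=u-v$ is a sign-changing eigenfunction with $\|w\|_{L^2}^2=2$ and $\cE_L(w,w)=2\lambda_1^L$. The strict part of Lemma~\ref{sec:funct-analyt-fram-1-modulus} then gives $\cE_L(|w|,|w|)<2\lambda_1^L = \lambda_1^L\||w|\|_{L^2}^2$, contradicting the minimality of $\lambda_1^L$; hence the $\lambda_1^L$-eigenspace is one-dimensional. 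The strict positivity $\xi_1>0$ a.e.\ in $\Omega$ is the most delicate point and is where I expect the main obstacle. One must rule out that $Z:=\{\xi_1=0\}\cap\Omega$ has positive Lebesgue measure; the idea is to probe the weak eigenvalue equation with test functions in $\cH(\Omega)$ (obtained, for example, by mollification as in Theorem~\ref{density}) that concentrate on $Z$, and to exploit the alternative representation (\ref{eq:alternative-representation}) together with the fact that the kernel $|x-y|^{-N}$ is strictly positive, so that the strictly positive set $\{\xi_1>0\}$ couples through $\cE_L$ to any measurable subset of $Z$ of positive measure, producing a cross term incompatible with $\lambda_1^L \int_\Omega \xi_1\phi\,dx=0$.
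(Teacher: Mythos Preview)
Your approach to (i), (iv), and the simplicity part of (iii) matches the paper's essentially line by line: direct method with weak lower semicontinuity of $\cE_L$ (via $L^2$-continuity of $u \mapsto \int [\j*u]u\,dx$), the inductive construction on orthogonal complements, and the use of Lemma~\ref{sec:funct-analyt-fram-1-modulus} for sign and simplicity. For (ii) the paper avoids the abstract spectral theorem: once (iv) is known, any $L^2$-normalized $v \in \cH(\Omega)$ orthogonal to all $\xi_k$ would lie in every $\cH_{k}(\Omega)$, yet satisfy $\cE_L(v,v)<\lambda_{k}^L(\Omega)$ for $k$ large, contradicting the variational characterization in (i); density of $\cH(\Omega)$ in $L^2(\Omega)$ then gives completeness. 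Your route works too but imports more machinery.

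The substantive difference is strict positivity of $\xi_1$. The paper does not argue directly as you sketch; instead it rewrites the weak eigenvalue equation as
\[
\cE(\xi_1,\phi) - (\rho_N+\lambda_1^L)\int_\Omega \xi_1\phi\,dx \;=\; \int_\Omega [\j*\xi_1]\,\phi\,dx \;\ge\; 0 \qquad \text{for all nonnegative }\phi \in C_c^\infty(\Omega),
\]
so that $\xi_1$ is a nontrivial nonnegative weak supersolution of $I\xi_1 - (\rho_N+\lambda_1^L)\xi_1 = 0$ for the integral operator $I$ with kernel $\k$, and then invokes the strong maximum principle \cite[Theorem~1.1]{JW-preprint} to conclude $\xi_1>0$ in $\Omega$. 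Your idea of testing against functions concentrated on $Z=\{\xi_1=0\}$ is precisely the mechanism behind that result, but making it rigorous here is not immediate: $1_Z$ is typically not in $\cH(\Omega)$, and under mollification the negative cross term $-c_N\int_Z\int_{\Omega\setminus Z}\xi_1(y)|x-y|^{-N}\,dy\,dx$ must be shown to survive in the limit against the other pieces of $\cE_L$. The paper sidesteps this by citing the external theorem; if you want a self-contained proof, you would effectively be reproving that strong maximum principle.
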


\begin{proof}
By (\ref{eq:continuous-L-2-weak-L-1}),
 \begin{equation}
  \label{eq:continuity-conv-functional}
\text{the functional
 $L^2(\Omega) \to \R$, $u \mapsto \int_{\R^N} [\j * u] u\,dx$ is continuous.}
\end{equation}
Consequently, by (\ref{eq:embedding-compact}), the functional
$$
\cH(\Omega) \to \R, \qquad u \mapsto \Phi(u):= \cE_L(u,u)
$$
is weakly lower semicontinuous. Moreover, setting $\cM_1:= \{u\in \cH(\Omega),\, \norm{u}_{L^2(\Omega)}=1\}$, we have that
\begin{equation}
  \label{eq:def-m-1}
m_1:= \sup \limits_{u \in \cM_1}\int_{\R^N} [\j * u]u\,dx < +\infty
\end{equation}
by (\ref{eq:continuity-conv-functional}), which implies that
\begin{equation}
  \label{eq:relative-functional-est}
\cE(u,u) \le \Phi(u) + m_1- \rho_N < +\infty \qquad \text{for $u \in \cM_1$.}
\end{equation}
Put $\lambda_1^L(\Omega):= \inf_{\cM_1} \Phi$.  From (\ref{eq:continuity-conv-functional}), (\ref{eq:relative-functional-est}) and the weak lower semicontinuity of $\Phi$, it then follows that $\lambda_1^L(\Omega)$ is attained by a function $\xi_1 \in \cM_1$. Consequently, there exists a Lagrange multiplier $\lambda \in \R$ such that
$$
\cE_L(\xi_1, \varphi)=\frac{1}{2}\Phi'(u)\varphi = \lambda \int_\Omega \xi_1 \varphi\, dx \qquad \text{for all $\varphi\in \cH(\Omega)$.}
$$
Choosing $\varphi = \xi_1$ yields $\lambda= \lambda_1^L(\Omega)$, hence $\xi_1$ is an eigenfunction of (\ref{eq 3.1}) corresponding to the eigenvalue $\lambda_1^L(\Omega)$. Next we proceed inductively and assume that $\xi_2,\dots,\xi_k \in \cH(\Omega)$ and $\lambda_2^L(\Omega) \le \dots \le \lambda_k^L(\Omega)$ are already given for some $k \in \N$ with the properties that for $i=2,\dots,k$, the function $\xi_i$ is a minimizer
of $\Phi$ within the set
$$
\cM_i:= \{u\in \cH_i(\Omega)\::\: \norm{u}_{L^2(\Omega)}=1\} = \{u\in \cH(\Omega)\::\: \norm{u}_{L^2(\Omega)}=1,\: \text{$\int_{\Omega} u \xi_j \,dx =0$ for $j=1,\dots i-1$} \},
$$
$\lambda_i^L(\Omega)= \inf_{\cM_i} \Phi = \Phi(\xi_i)$, and
\begin{equation}
  \label{eq:inductive-eigenvalue}
  \cE_L(\xi_i, \varphi)=\lambda_i^L(\Omega)\int_\Omega \xi_i \varphi\, dx \qquad \text{for all $\varphi\in \mathbb{H}(\Omega)$.}
\end{equation}
We then put
\begin{align*}
&\cH_{k+1}(\Omega):= \left\{u\in \cH(\Omega)\::\: \norm{u}_{L^2(\Omega)}=1,\: \text{$\int_{\Omega} u \xi_i \,dx =0$ for $i=1,\dots k$} \right\},\\
&\cM_{k+1}:= \left\{u\in \cH_{k+1}(\Omega),\, \norm{u}_{L^2(\Omega)}=1\right\}\quad \text{and}\\
&\lambda_{k+1}^L(\Omega):= \inf_{\cM_{k+1}}\Phi.
\end{align*}
By the same weak lower semicontinuity argument as above, the value $\lambda_{k+1}^L(\Omega)$ is attained by a function $\xi_{k+1} \in \cM_{k+1}$. Consequently, there exists a Lagrange multiplier $\lambda \in \R$ with the property that
\begin{equation}
  \label{eq:inductive-eigenvalue-k+1}
\cE_L(\xi_{k+1}, \varphi)=\lambda \int_\Omega \xi_{k+1} \varphi\, dx \qquad \text{for all $\varphi \in \cH_{k+1}(\Omega)$.}
\end{equation}
Choosing $\varphi = \xi_{k+1}$, it yields $\lambda= \lambda_{k+1}^L(\Omega)$. Moreover, for $i=1,\dots,k$, we have, by (\ref{eq:inductive-eigenvalue}) and the definition of $\cH_{k+1}(\Omega)$,
$$
\cE_L(\xi_{k+1}, \xi_i)=\cE_L(\xi_i, \xi_{k+1})= \lambda_i^L(\Omega)  \int_{\Omega} \xi_i \xi_{k+1}\,dx = 0 = \lambda_{k+1}^L(\Omega)\int_{\Omega} \xi_{k+1} \xi_i \,dx.
$$
Hence (\ref{eq:inductive-eigenvalue-k+1}) holds with $\lambda= \lambda_{k+1}^L(\Omega)$ for all $\varphi\in \cH(\Omega)$.
Inductively, we have now constructed an $L^2$-normalized sequence $(\xi_k)_k$ in $\cH(\Omega)$ and a nondecreasing sequence $(\lambda_k^L(\Omega))_k$ in $\R$ such that property (i) holds and such that $\xi_k$ is an eigenfunction of (\ref{eq 3.1}) corresponding to $\lambda = \lambda_k^L(\Omega)$ for every $k \in \N$. Moreover, by construction, the sequence $(\xi_k)_k$ forms an orthonormal system in $L^2(\Omega)$.
Next we show property (iv), i.e., $\lim \limits_{k\to+\infty} \lambda_k^L(\Omega)=+\infty.$ Supposing by contradiction that $c:= \lim \limits_{k \to \infty}\lambda_k^L(\Omega) <+\infty$, we deduce that
$\cE_L(\xi_k,\xi_k) \le c$ for every $k \in \N$ and therefore
$$
\cE(\xi_k,\xi_k) \le c + m_1- \rho_N < +\infty \qquad \text{for every $k \in \N$}
$$
by (\ref{eq:relative-functional-est}). Hence the sequence $(\xi_k)$ is bounded in $\cH(\Omega)$, and therefore it
contains a convergent subsequence $(\xi_{k_j})_j$ in $L^2(\Omega)$ by (\ref{eq:embedding-compact}).
This however is impossible since the functions $\{\xi_{k_j}\}_{j \in \N}$ are $L^2$-orthonormal. Hence (iv) is proved.

Next, to prove that $\{\xi_k\::\: k \in \N\}$ is an orthonormal basis of $L^2(\Omega)$, we first suppose by contradiction that there exists $v \in \mathbb{H}(\Omega)$ with $\|v\|_{L^2(\Omega)} = 1$
 and $\int_{\Omega} v \xi_k dx =0$ for any $k \in \N$.
Since $\lim \limits_{k\to\infty} \lambda_k^L(\Omega)=+\infty$, there exists an integer $k_0>0$  such that
$\Phi(v)<\lambda_{k_0}^L(\Omega)=\inf_{\cM_{k_0}}\Phi(u)$, which by definition of $\cM_{k_0}$ implies that
$\int_{\Omega} v \xi_k dx \not = 0$ for some $k \in \{1,\dots,k_0-1\}$. Contradiction. We conclude that $\cH(\Omega)$ is contained in the $L^2$-closure of the span of $\{\xi_k\::\: k \in \N\}$. Since $\cH(\Omega)$ is dense in $L^2(\Omega)$, we conclude that the span of $\{\xi_k\::\: k \in \N\}$ is dense in $L^2(\Omega)$, and hence $\{\xi_k\::\: k \in \N\}$ is an orthonormal basis of $L^2(\Omega)$.

Finally, we show property (iii), which also implies that $\lambda_1^L(\Omega)< \lambda_2^L(\Omega)$. Let $w \in \cH(\Omega)$ be a $L^2$-normalized eigenfunction of (\ref{eq 3.1}) corresponding to the eigenvalue $\lambda_1^L(\Omega)$, i.e. we have
\begin{equation}
  \label{eq:inductive-eigenvalue-k=1}
 \cE_L(w, \varphi)=\lambda_1^L(\Omega) \int_\Omega w \varphi\, dx \qquad \text{for all $w \in \mathbb{H}(\Omega)$.}
\end{equation}
We show that $w$ does not change sign. Indeed, choosing $\phi= w$ in (\ref{eq:inductive-eigenvalue-k=1}), we see that $w$ is a minimizer of $\Phi\big|_{\cM_1}$. On the other hand, we also have $|w| \in \cM_1$ and
$$
\Phi(|w|) =\cE_L(|w|,|w|)\le \cE_L(w,w) = \Phi(w) = \lambda_1^L(\Omega),
$$
by Lemma~\ref{sec:funct-analyt-fram-1-modulus}. Hence equality holds by definition of $\lambda_1^L(\Omega)$, and then Lemma~\ref{sec:funct-analyt-fram-1-modulus} implies that $w$ does not change sign. In particular, we may assume that $\xi_1$ is nonnegative, which implies that
$$
\cE(\xi_1,\phi) -[\rho_N+\lambda_1] \int_{\Omega} \xi_1 \phi\,dx = \int_{\Omega} [\j * \xi_1] \phi \,dx \ge 0 \qquad \text{for all $\phi \in C^\infty_c(\Omega)$, $\phi \ge 0$.}
$$
Hence $\xi_1$ is a nontrivial, nonnegative weak supersolution of the equation $I \xi_1 - (\rho_N+\lambda_1)\xi_1=0$ in $\Omega$ in the sense of \cite{JW-preprint}, where $I$ is the integral operator associated with the kernel $\k$ defined in (\ref{eq:def-k}). Therefore, \cite[Theorem 1.1]{JW-preprint} applies and yields that $\xi_1>0$ in $\Omega$.

Now suppose by contradiction that there is a function $w \in \cH(\Omega)$ satisfying (\ref{eq:inductive-eigenvalue-k=1}) and such that $w \not = t\xi_1$ for every $t \in \R$. Then there exist a linear combination $\tilde w$ of $w$ and $\xi_1$ which changes sign, and we may also assume that $\tilde w$ is $L^2$-normalized. Since $\tilde w$ also satisfies (\ref{eq:inductive-eigenvalue-k=1}) in place of $w$, we arrive at a contradiction. Thus the eigenvalue $\lambda_1^L(\Omega)$ is simple, and property (iii) holds.
\end{proof}

Note that the logarithmic Laplacian has the same structure of Dirichlet eigenvalues and eigenfunctions as the fractional Laplacian, see \cite[Section 3]{SV} (cf. also \cite{GS1}). A remarkable relationship between the first Dirichlet eigenvalue of
$\loglap$ and $(-\Delta)^s$ for $s>0$ close to $0$ is given by Theorem~\ref{sec:funct-analyt-fram-approximation-main}, which we prove now and restate here for the reader's convenience.

\begin{theorem}
\label{sec:funct-analyt-fram-approximation}
Let $\Omega$ be a bounded Lipschitz domain  in $\R^N$, and let $\lambda_1^s(\Omega)$ denote the first Dirichlet eigenvalue of $(-\Delta)^s$ on $\Omega$ for $s \in (0,1)$. Then we have
\begin{equation}
  \label{eq:zero-deriv-lambda}
\lambda_1^L(\Omega) = \frac{d}{ds}\Big|_{s=0} \lambda_1^s(\Omega).
\end{equation}
Moreover, if $u_s$ is the unique nonnegative $L^2$-normalized Dirichlet eigenfunction
of $(-\Delta)^s$ corresponding to $\lambda_1^s(\Omega)$, then we have
\begin{equation}
  \label{eq:eigenfunction-convergence}
u_s \to \xi_1 \qquad \text{in $L^2(\Omega)$,}
\end{equation}
where $\xi_1$ is the corresponding unique nonnegative $L^2$-normalized eigenfunction of $\loglap$ corresponding to $\lambda_1^L$.
\end{theorem}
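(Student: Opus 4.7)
The plan is to sandwich $\tilde\lambda_s := \frac{\lambda_1^s(\Omega)-1}{s}$ between an upper bound obtained from test functions and a matching lower bound extracted from the eigenfunctions $u_s$ themselves; the $L^2$-convergence of $u_s$ to $\xi_1$ will fall out of the lower-bound argument. The delicate point is a uniform $\cH(\Omega)$-bound on $(u_s)$, because $\cE_L$ contains negative terms and so is not directly coercive.

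For the upper bound, given $\phi \in C_c^\infty(\Omega)$ with $\|\phi\|_{L^2(\Omega)}=1$, the variational characterization of $\lambda_1^s(\Omega)$ gives $\lambda_1^s(\Omega) \le \int_{\R^N}[(-\Delta)^s\phi]\phi\,dx$, whence
$$
\tilde\lambda_s \le \int_{\R^N}\frac{(-\Delta)^s\phi-\phi}{s}\,\phi\,dx \longrightarrow \int_{\R^N}[\loglap\phi]\phi\,dx = \cE_L(\phi,\phi) \quad\text{as}\ s\to 0^+
$$
by Theorem~\ref{teo-representation}(i). Taking the infimum over $\phi$ and invoking the density of $C_c^\infty(\Omega)$ in $\cH(\Omega)$ from Theorem~\ref{density} yields $\limsup_{s\to 0^+}\tilde\lambda_s \le \lambda_1^L(\Omega)$; in particular $\tilde\lambda_s$ is bounded above.

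The a priori $\cH(\Omega)$-bound I would derive on the Fourier side. Plancherel gives $\int_{\R^N}|\xi|^{2s}|\widehat{u_s}|^2 d\xi = \lambda_1^s(\Omega)$ and $\|\widehat{u_s}\|_{L^2}^2 = 1$, while the convexity inequality $|\xi|^{2s} - 1 \ge 2s\log|\xi|$ (a consequence of $e^t\ge 1+t$) integrated against $|\widehat{u_s}|^2$, combined with the Parseval identity $\cE_L(u,u)=\int 2\log|\xi||\widehat u|^2 d\xi$ from Theorem~\ref{teo-representation}(ii) (extended to $\cH(\Omega)$ by density), yields
$$
s\,\tilde\lambda_s = \int_{\R^N}(|\xi|^{2s}-1)|\widehat{u_s}|^2 d\xi \ge 2s\int_{\R^N}\log|\xi|\,|\widehat{u_s}|^2 d\xi = s\,\cE_L(u_s,u_s);
$$
the low-frequency part of the middle integral is under control since $\widehat{u_s}\in L^\infty$ (as $u_s\in L^2$ has support in the bounded set $\Omega$). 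Thus $\cE_L(u_s,u_s) \le \tilde\lambda_s \le C$. Unpacking the definition
$$
\cE(u_s,u_s) = \cE_L(u_s,u_s) + c_N\int\!\!\int_{|x-y|\ge 1}\frac{u_s(x)u_s(y)}{|x-y|^N}\,dx\,dy - \rho_N
$$
and bounding the long-range cross term by $c_N|\Omega|$ (using $|x-y|^{-N}\le 1$ there) gives a uniform bound on $\|u_s\|_{\cH(\Omega)}^2 = \cE(u_s,u_s)$.

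Finally, the compact embedding $\cH(\Omega)\hookrightarrow L^2(\Omega)$ produces a subsequence with $u_{s_k}\to u$ strongly in $L^2(\Omega)$, weakly in $\cH(\Omega)$, and $\tilde\lambda_{s_k}\to\tilde\lambda_\infty\le\lambda_1^L(\Omega)$; the limit satisfies $u\ge 0$ and $\|u\|_{L^2}=1$, so $u\not\equiv 0$. For every $\phi\in C_c^\infty(\Omega)$, self-adjointness of $(-\Delta)^{s}$ gives
$$
\tilde\lambda_{s_k}\int_\Omega u_{s_k}\phi\,dx = \int_{\R^N} u_{s_k}\,\frac{(-\Delta)^{s_k}\phi-\phi}{s_k}\,dx \longrightarrow \int_{\R^N} u\,[\loglap\phi]\,dx = \cE_L(u,\phi)
$$
by Theorem~\ref{teo-representation}(i); extending by the density in Theorem~\ref{density}, $u$ is a nonnegative weak eigenfunction of $\loglap$ with eigenvalue $\tilde\lambda_\infty$. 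Expanding $u = \sum a_k \xi_k$ in the basis from Theorem~\ref{pr eigen 1}(ii) and using $(\lambda_k^L-\tilde\lambda_\infty)a_k=0$: since $u\ge 0$ and $\xi_1>0$ force $a_1=\int u\,\xi_1\,dx>0$, one has $\tilde\lambda_\infty=\lambda_1^L(\Omega)$, and simplicity (Theorem~\ref{pr eigen 1}(iii)) together with normalization gives $u=\xi_1$. As every subsequence produces the same limit, the full family converges: $\tilde\lambda_s\to\lambda_1^L(\Omega)$ and $u_s\to\xi_1$ in $L^2(\Omega)$ as $s\to 0^+$.
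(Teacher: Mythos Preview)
Your proof is correct and follows the same overall architecture as the paper: an upper bound via test functions and Theorem~\ref{teo-representation}(i), a uniform $\cH(\Omega)$-bound on $(u_s)$, extraction of a subsequence via the compact embedding, and identification of the limit by passing to the limit in the weak eigenvalue equation. The identification step differs only cosmetically (you use the spectral expansion and positivity of $\int u\,\xi_1$, the paper uses the variational characterization of $\lambda_1^L$).

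The genuine difference lies in how you obtain the a priori bound. The paper works in real space: it expands $\frac{\cE_s(u_s,u_s)-1}{s}$ by splitting the double integral at $|x-y|=1$, and tracks the asymptotics of the constants $d_N(s)=c_{N,s}/s$ and $f_N(s)=\frac{1}{s}(\frac{c_{N,s}|S^{N-1}|}{2s}-1)$ as $s\to 0$. You instead work on the Fourier side, exploiting the pointwise convexity inequality $|\xi|^{2s}-1\ge 2s\log|\xi|$ to get $\cE_L(u_s,u_s)\le \tilde\lambda_s$ in one line. Your route is shorter and more conceptual; the paper's is more self-contained, since it never needs the Fourier representation $\cE_L(u,u)=\int 2\log|\xi|\,|\hat u|^2\,d\xi$ for functions outside $C_c^\beta(\R^N)$. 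Your phrase ``extended to $\cH(\Omega)$ by density'' for this identity deserves one more sentence of justification: the cleanest way is to note that $\cE(u,u)=\int g_\k(\xi)|\hat u(\xi)|^2\,d\xi$ with $g_\k(\xi)=\int(1-\cos z\cdot\xi)\k(z)\,dz\ge 0$ holds for all $u\in L^2(\R^N)$ (both sides simultaneously finite or infinite), and the remaining terms $\int[\j*u]u$ and $\rho_N\|u\|_{L^2}^2$ are $L^2$-continuous, so the symbol identity $g_\k-\hat\j+\rho_N=2\log|\cdot|$ from Theorem~\ref{teo-representation}(ii) carries over.
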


\begin{proof}
We first recall that, for $0<s<1$, the first Dirichlet eigenvalue of $(-\Delta)^s$ is given by
\begin{equation}
\label{variational-s-characterization}
\lambda_1^s(\Omega) = \inf \{\cE_s(u,u)\::\: u \in C^2_c(\Omega),\: \|u\|_{L^2}=1\},
\end{equation}
where the quadratic form $\cE_s$ is defined by
$$
(u,v) \mapsto \cE_s(u,u) = \frac{c_{N,s}}2 \int_{\R^N} \int_{\R^N}\frac{ \bigl(u(x)-u(y)\bigr)\bigl(v(x)-v(y)\bigr)}{|x-y|^{N+2s}}dx dy.
$$
From the variational characterization (\ref{variational-s-characterization}) and the fact that $(-\Delta)^s \psi \to \psi$ as $s \to 0$ for $\psi \in C^2_c(\R^N)$,
we observe that
$$
\lim_{s \to 0^+}  \lambda_1^s(\Omega) = 1.
$$
Let
$$
\Gamma_\s: C_c^2(\R^N) \to L^2(\R^N),\qquad \Gamma_\s u = \frac{(-\Delta)^\s u - u}{\s}.
$$
For $w \in C^2_c(\Omega)$ with $\|w\|_{L^2}=1$, we have that
$$
\limsup_{s \to 0^+} \frac{ \lambda_1^s(\Omega)-1}{s} \le \limsup_{s \to 0^+}\frac{
\cE_s(w,w)-\|w\|^2_{L^2}}{s}= \lim_{s \to 0^+} \langle \Gamma_s w, w\rangle_{L^2(\Omega)}
= \langle \loglap w, w\rangle_{L^2(\Omega)}
$$
by Theorem~\ref{teo-representation}(i), and consequently,
\begin{equation}
  \label{eq:limsup-est}
\limsup_{s \to 0^+} \frac{ \lambda_1^s(\Omega)-1}{s} \le \inf_{\stackrel{w \in C^2_c(\Omega)}{\|w\|_{L^2}=1}} \langle \loglap w, w\rangle_{L^2(\Omega)}
= \inf_{\stackrel{w \in \cH(\Omega)}{\|w\|_{L^2}=1}} \cE_L(w,w)= \lambda_1^L(\Omega).
\end{equation}
Here we used~(\ref{eq:simple-integration-by-parts}) and the fact that $C^2_c(\Omega)$ is dense in $\cH(\Omega)$ by Theorem~\ref{density}. Next we wish to prove (\ref{eq:eigenfunction-convergence}). For this we first prove that the functions $u_{s}$ remain uniformly bounded in $\cH(\Omega)$ as $s \to 0^+$. Indeed, by  (\ref{eq:limsup-est}) we have, as $s \to 0^+$,
\begin{align*}
&\lambda_1^L(\Omega) +o(1) \ge \frac{\lambda_1^{s}(\Omega) -1}{s} = \frac{\cE_s(u_s,u_s)-1}{s} \\
& =\frac{c_{N,s}}{2s} \int \!\!\!\! \int_{|x-y| \le 1} \frac{(u_s(x)-u_s(y))^2}{|x-y|^{N+2s}}dxdy  - \frac{c_{N,s}}{s}
\int \!\!\!\! \int_{|x-y| \ge 1} \frac{u_s(x)u_s(y)}{|x-y|^{N+2s}}dxdy\\
&\quad+\frac{1}{s}\Bigl(c_{N,s}
\int_{\Omega} \int_{\R^N \setminus B_1(x)} \frac{u_s(x)^2}{|x-y|^{N+2s}}dydx -1\Bigr)\\
& =\frac{c_{N,s}}{2s} \int \!\!\!\! \int_{|x-y| \le 1} \frac{(u_s(x)-u_s(y))^2}{|x-y|^{N+2s}}dxdy  - \frac{c_{N,s}}{s}
\int \!\!\!\! \int_{|x-y| \ge 1} \frac{u_s(x)u_s(y)}{|x-y|^{N+2s}}dxdy + f_N(s)
\end{align*}
with
$$
f_N(s)= \frac{1}{s} \Bigl( c_{N,s} \int_{\R^N \setminus B_1(x)}|x-y|^{-N-2s}\,dy -1\Bigr)= \frac{1}{s}\Bigl(\frac{c_{N,s}|S^{N-1}|}{2s}-1\Bigr).
$$
Here we used that $\|u_s\|_{L^2(\Omega)}=1$. Consequently,
\begin{align*}
\lambda_1^L(\Omega) +o(1) &\ge \frac{c_{N,s}}{2s} \int \!\!\!\! \int_{|x-y| \le 1} \frac{(u_s(x)-u_s(y))^2}{|x-y|^{N}}dxdy  - \frac{c_{N,s}}{s}
\int \!\!\!\! \int_{|x-y| \ge 1} \frac{u_s(x)u_s(y)}{|x-y|^{N}}dxdy + f_N(s)\\
&= \frac{c_{N,s}}{ s c_N}\Bigl( \cE(u_s,u_s)  - \int_{\Omega} [\j * u_s] u_s dx\Bigr) + f_N(s).
\end{align*}
Therefore, writing again $c_{N,s}= s d_N(s)$ as in (\ref{eq:def-d-N-s}) and using that
$$
d_N(s) \to c_N, \quad f_{N}(s) =  \frac{1}{s}\Bigl(\frac{d_N(s) |S^{N-1}|}{2}-1\Bigr) \to \frac{|S^{N-1}|}{2} d_N'(0)= \frac{d_N'(0)}{c_N}= \rho_N \;\quad \text{as \,$s \to 0^+$,}
$$
we infer that
\begin{align*}
\cE(u_s,u_s) &\le \frac{c_N}{d_{N}(s)}\Bigl( \lambda_1^L(\Omega) +o(1) - f_N(s) \Bigr)  + \int_{\Omega} [\j * u_s] u_s dx  \\
&\le \Bigl(1+o(1))\Bigl( \lambda_1^L(\Omega) - \rho_N  +o(1) \Bigr) + m_1 \qquad \text{as $s \to 0^+$,}
\end{align*}
where $m_1$ is given in (\ref{eq:def-m-1}). We thus conclude that the functions $u_{s}$ remain uniformly bounded in $\cH(\Omega)$ as $s \to 0^+$.
Now, to prove (\ref{eq:eigenfunction-convergence}), we argue by contradiction and suppose that there exists $\eps>0$ and a sequence $s_k \to 0$ with
\begin{equation}
  \label{eq:contradiction-statement}
\|u_{s_k}-\xi_1\|_{L^2(\Omega)} \ge \eps \qquad \text{for all $k \in \N$.}
\end{equation}
Since the sequence $\{u_{s_k}\}_k$ is bounded in $\cH(\Omega)$, we may, by (\ref{eq:embedding-compact}), pass to a subsequence such that
$$
\text{$u_{s_k} \rightharpoonup u_0$ in $\cH(\Omega)$,}\quad
\text{$u_{s_k} \to u_0$ in $L^2(\Omega)$}\quad \text{and}\quad
\text{$\frac{\lambda_1^{s_k}(\Omega)-1}{s_k} \to \lambda^* \in [-\infty,\lambda_1^L(\Omega)]$}\qquad \text{as $k \to \infty$.}
$$
In particular, it follows that $\|u_0\|_{L^2}=1$. For $\psi \in C^2_c(\Omega)$, we now find that
\begin{align}
\lim_{k \to \infty}\frac{\lambda_1^{s_k}(\Omega)-1}{s_k} \langle u_{s_k}, \psi \rangle_{L^2} &= \lim_{k \to \infty}\frac{\cE_{s_k}(u_{s_k},\psi)-\langle u_{s_k}, \psi \rangle_{L^2}}{s_k}\label{revision-1}\\
&= \lim_{k \to \infty}\langle u_{s_k}, \Gamma_{s_k} \psi \rangle_{L^2} 
= \langle u_0, \loglap \psi \rangle_{L^2} =
\cE_L(u_0,\psi). \nonumber
\end{align}
Here we used Theorem~\ref{teo-representation}(i) with $p=2$. Since also $\langle u_{s_k}, \psi \rangle_{L^2} \to \langle u_{0}, \psi \rangle_{L^2}$ as $k \to \infty$, and since we may choose $\psi \in C^2_c(\Omega)$ with $\langle u_{0}, \psi \rangle_{L^2} >0$, it follows from (\ref{revision-1}) that $\lambda^*>-\infty$ and
$$
\cE_L(u_0,\psi)= \lambda^* \langle u_0, \psi \rangle_{L^2} \qquad \text{for all $\psi \in \cH(\Omega)$.}
$$
Thus $u_0$ is an eigenfunction of $\loglap$ corresponding to the eigenvalue $\lambda^*$. Since $\lambda^* \le \lambda_1^L(\Omega)$, it follows from the definition of $\lambda_1^L(\Omega)$ that $\lambda^* = \lambda_1^L(\Omega)$. Since moreover $u_0 \ge 0$ and $\|u_0\|_{L^2}=1$, $u_0= \xi_1$ is the unique nonnegative $L^2$-normalized eigenfunction of $\loglap$. This contradicts (\ref{eq:contradiction-statement}), and hence (\ref{eq:eigenfunction-convergence}) is proved. Next, let $\lambda_*:= \liminf \limits_{s \to 0^+} \frac{ \lambda_1^s(\Omega)-1}{s}$, and let $s^k \to 0^+$ be a sequence such that $\frac{\lambda_1^{s^k}(\Omega)-1}{s^k} \to \lambda_*$. We have already seen that $u_{s^k} \to \xi_1$ in $L^2(\Omega)$. By the same argument as above,  we thus find that $\lambda_*>-\infty$ and that 
$$
\cE_L(\xi_1,\psi)= \lambda_* \langle \xi_1, \psi \rangle_{L^2} \qquad \text{for all $\psi \in \cH(\Omega)$}.
$$
Consequently, we have $\lambda_* = \lambda_1^L(\Omega)$, and together with (\ref{eq:limsup-est}) this implies
(\ref{eq:zero-deriv-lambda}). The proof is finished.
\end{proof}

We may now deduce the Faber-Krahn-inequality for the logarithmic Laplacian, which we restate here for the reader's convenience.

\begin{corollary}
\label{sec:faber-Krahn}
Let $\rho>0$. Among all bounded Lipschitz domains $\Omega$ with $|\Omega| = \rho$, the ball $B=B_r$ with $|B|=\rho$ minimizes $\lambda_1^L(\Omega)$.
\end{corollary}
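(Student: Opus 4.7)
The plan is to transfer the Faber--Krahn inequality for the fractional Laplacian, due to Ba\~nuelos et al.\ \cite[Theorem 5]{BLMH}, to the logarithmic Laplacian by differentiating in $s$ at $s=0$, using Theorem~\ref{sec:funct-analyt-fram-approximation-main}. The key point is that the latter theorem not only identifies $\lambda_1^L(\Omega)$ with $\frac{d}{ds}\big|_{s=0}\lambda_1^s(\Omega)$, but in fact the proof shows the stronger statement
$$
\lambda_1^L(\Omega) = \lim_{s \to 0^+} \frac{\lambda_1^s(\Omega)-1}{s}.
$$

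Concretely, fix $\rho>0$, let $\Omega \subset \R^N$ be an arbitrary bounded Lipschitz domain with $|\Omega|=\rho$, and let $B = B_r(0)$ be the ball with $|B|=\rho$. For every $s \in (0,1)$, the Faber--Krahn inequality for $(-\Delta)^s$ yields
$$
\lambda_1^s(B) \le \lambda_1^s(\Omega).
$$
Subtracting $1$ from both sides, dividing by $s>0$, and sending $s \to 0^+$ gives, by Theorem~\ref{sec:funct-analyt-fram-approximation-main} (applied separately to $B$ and to $\Omega$, both of which are bounded Lipschitz domains),
$$
\lambda_1^L(B) = \lim_{s \to 0^+}\frac{\lambda_1^s(B)-1}{s} \le \lim_{s \to 0^+}\frac{\lambda_1^s(\Omega)-1}{s} = \lambda_1^L(\Omega),
$$
which is the desired inequality.

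There is essentially no obstacle in this argument, since both ingredients are already in hand: the Faber--Krahn inequality for $(-\Delta)^s$ holds for every $s \in (0,1)$, and Theorem~\ref{sec:funct-analyt-fram-approximation-main} supplies the limit identity for each fixed bounded Lipschitz domain. The only thing one has to be mildly careful about is that the inequality $\lambda_1^s(B) \le \lambda_1^s(\Omega)$ survives passage to the limit because it holds with the same sign for all $s>0$, so the monotonicity of the limit is automatic; no uniformity in $s$ or compactness argument is required.
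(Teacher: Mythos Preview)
Your proof is correct and follows essentially the same approach as the paper: apply the fractional Faber--Krahn inequality $\lambda_1^s(B)\le\lambda_1^s(\Omega)$ from \cite{BLMH}, subtract $1$, divide by $s$, and pass to the limit $s\to 0^+$ using Theorem~\ref{sec:funct-analyt-fram-approximation-main}. Your observation that the limit form $\lambda_1^L(\Omega)=\lim_{s\to 0^+}\frac{\lambda_1^s(\Omega)-1}{s}$ (rather than merely the derivative statement) is what is actually needed is exactly right and matches the paper's usage.
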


\begin{proof}
Let $\Omega \subset \R^N$ be a bounded Lipschitz domain with $|\Omega|=\rho$. By \cite[Theorem 5]{BLMH}, we have
$$
\lambda_1^s(B) \le \lambda_1^s(\Omega) \qquad \text{for $s \in (0,1)$.}
$$
Consequently, we have
$$
\lambda_1^L(B) = \lim_{s \to 0^+} \frac{\lambda_1^s(B)-1}{s} \le \lim_{s \to 0^+} \frac{\lambda_1^s(\Omega)-1}{s}= \lambda_1^L(\Omega).
$$
\end{proof}

\begin{remark}
\label{sec:faber-krahn-remark}{\rm 
We note that we do not have a direct proof of Corollary~\ref{sec:faber-Krahn}, which is merely based on symmetrization arguments applied to the quadratic form $\cE_L$. Moreover, even though we have, with the notation above, $\lambda_1^s(B) < \lambda_1^s(\Omega)$ in the case where $\Omega \not = B$, the argument above does not imply that $\lambda_1^L(B) < \lambda_1^L(\Omega)$ when $\Omega \not = B$. The characterization of the equality case remains an open problem.
}
\end{remark}

\section{The maximum principle on bounded domains}
\label{sec:maximum-principle}

Let $\Omega \subset \R^N$ be a bounded domain. In this section, we present maximum principles for the operator $\loglap$ on $\Omega$. We start with a strong version of the maximum principle for pointwise solutions, which turns out to be a rather direct consequence of the representation (\ref{representation-regional}).

\begin{proposition} (Strong Maximum Principle for pointwise solutions)
\label{thm-strong-max-principle}
Let $\Omega \subset \R^N$ be a bounded domain with $h_\Omega +\rho_N \ge 0$ on $\Omega$. Moreover, let $u \in L^1_0(\R^N)$ be a function which is continuous on $\bar{\Omega}$, Dini continuous in $\Omega$, and satisfies
$$
\loglap u \ge 0 \quad \text{in $\Omega$,}\qquad u \ge 0 \qquad \text{on $\R^N \setminus \Omega$}.
$$
Then $u > 0$ in $\Omega$ or $u \equiv 0$ a.e. in $\R^N$.
\end{proposition}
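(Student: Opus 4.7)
\bigskip

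\noindent\textbf{Proof proposal.} The natural tool is the alternative representation
$(\ref{representation-regional})$, which is available under the stated regularity on $u$, namely
$u\in L^1_0(\R^N)$ together with Dini continuity in $\Omega$. My plan is to evaluate
$\loglap u$ at an interior minimizer and extract sign information from the three terms
of $(\ref{representation-regional})$ separately, using in an essential way the assumption
$h_\Omega+\rho_N\ge 0$.

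First I would set $m:=\min_{\overline\Omega}u$, which exists by continuity of $u$ on the
compact set $\overline\Omega$. Since $u\ge 0$ on $\R^N\setminus\Omega$ and $u$ is continuous
on $\overline\Omega$, we have $u\ge 0$ on $\partial\Omega$, so any point $x_0\in\overline\Omega$
attaining a negative minimum must lie in $\Omega$. To show $m\ge 0$, I would argue by
contradiction: assume $m<0$ and let $x_0\in\Omega$ satisfy $u(x_0)=m$. Applying
$(\ref{representation-regional})$ at $x_0$, the first integrand
$\bigl(u(x_0)-u(y)\bigr)|x_0-y|^{-N}$ is $\le 0$ for a.e.\ $y\in\Omega$ because $x_0$ is a
minimizer on $\overline\Omega\supset\Omega$; the second term
$-c_N\int_{\R^N\setminus\Omega}u(y)|x_0-y|^{-N}\,dy$ is $\le 0$ since $u\ge 0$ on
$\R^N\setminus\Omega$; and the third term $[h_\Omega(x_0)+\rho_N]u(x_0)$ is $\le 0$ since
$h_\Omega+\rho_N\ge 0$ and $u(x_0)<0$. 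But the sum equals $\loglap u(x_0)\ge 0$, forcing
all three terms to vanish. In particular $u(y)=u(x_0)=m$ for a.e.\ $y\in\Omega$, and by
continuity on $\overline\Omega$ this gives $u\equiv m<0$ on $\overline\Omega$, contradicting
$u\ge 0$ on $\partial\Omega$. Hence $m\ge 0$.

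It then remains to show the dichotomy $u>0$ in $\Omega$ or $u\equiv 0$ a.e.\ in $\R^N$. If
$u(x)>0$ for all $x\in\Omega$ we are done. Otherwise there exists $x_0\in\Omega$ with
$u(x_0)=0$, and $x_0$ is again a minimizer of $u$ on $\overline\Omega$. Reading
$(\ref{representation-regional})$ at $x_0$ with $u(x_0)=0$ gives
\begin{equation*}
0\le \loglap u(x_0)=-c_N\int_\Omega\frac{u(y)}{|x_0-y|^N}\,dy
-c_N\int_{\R^N\setminus\Omega}\frac{u(y)}{|x_0-y|^N}\,dy,
\end{equation*}
and since $u\ge 0$ on all of $\R^N$ (on $\Omega$ because $m\ge 0$, on $\R^N\setminus\Omega$
by hypothesis), both integrals are nonnegative. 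Therefore both must vanish, which forces
$u\equiv 0$ a.e.\ in $\R^N$, as desired.

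The argument is essentially algebraic once the representation $(\ref{representation-regional})$
is in hand; the only real subtlety is making sure all three terms really do have a sign at the
minimizer, which is precisely where the hypothesis $h_\Omega+\rho_N\ge 0$ is used. I expect no
significant obstacle beyond verifying that $u\in L^1_0(\R^N)$ together with continuity on
$\overline\Omega$ and Dini continuity in $\Omega$ is enough for the pointwise identity
$(\ref{representation-regional})$ to be valid at every $x\in\Omega$, which is granted by
Proposition~\ref{dini-continuity-well-defined}(i).
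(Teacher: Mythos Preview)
Your argument is correct and follows the same core idea as the paper: evaluate the regional representation \eqref{representation-regional} at an interior minimizer and use that each of the three terms has a definite sign, forcing all of them to vanish. The only difference is in the endgame. You split into $m<0$ and $m=0$; in the former case you reach a contradiction because $u\equiv m<0$ on $\overline\Omega$ is incompatible with $u\ge 0$ on $\partial\Omega\subset\R^N\setminus\Omega$. The paper instead treats $u(x_0)\le 0$ in one pass: after deducing $u\equiv u(x_0)$ on $\Omega$ it notes that \emph{every} point of $\Omega$ is now a minimizer, picks one close to $\partial\Omega$ where $h_\Omega+\rho_N>0$ strictly, and reads off $u(x_0)=0$ from the vanishing of the third term. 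Your route has the advantage of not invoking the boundary blow-up $h_\Omega(x)\to\infty$ as $\rho(x)\to 0$ (which the paper cites through \eqref{eq:lower-estimate-kappa-omega}, a Lipschitz statement, even though the present proposition is for a general bounded domain); the trade-off is that you are implicitly reading the hypothesis $u\ge 0$ on $\R^N\setminus\Omega$ as a pointwise statement on $\partial\Omega$, which is harmless here since $u$ is a fixed function continuous on $\overline\Omega$.
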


\begin{proof}
Suppose by contradiction that $u$ is not positive in $\Omega$. Since $u$ is continuous on $\bar{\Omega}$, there exists a point $x_0 \in \Omega$ with
\begin{equation}
  \label{eq:x-0-eq}
u(x_0)= \min_{x \in \Omega} u(x)  \le 0,
\end{equation}
whereas, by (\ref{representation-regional}),
$$
0 \le [\loglap u](x_0) = c_N\int_{\Omega} \frac{ u(x_0) -u(y)}{|x-y|^{N} } dy  - c_N \int_{\R^N \setminus \Omega} \frac{u(y)}{|x-y|^{N}}\,dy + [h_\Omega(x_0)+\rho_N] u(x_0)\le  0,
$$
since all three terms are nonpositive by assumption and (\ref{eq:x-0-eq}). It follows that
\begin{equation}
  \label{eq:three-zero}
0= \int_{\Omega} \frac{ u(x_0) -u(y)}{|x-y|^{N} } dy  = \int_{\R^N \setminus \Omega} \frac{u(y)}{|x-y|^{N}}\,dy = [h_\Omega(x_0)+\rho_N] u(x_0).
\end{equation}
The first two equalities and (\ref{eq:x-0-eq}) imply that $u \equiv u(x_0) \le 0$ in $\Omega$ and $u \equiv 0$ a.e. on $\R^N \setminus \Omega$. Then the preceding arguments are still valid if we choose $x_0$ sufficiently close to the boundary so that $[h_\Omega(x_0)+\rho_N]>0$ by (\ref{eq:lower-estimate-kappa-omega}), which then implies that $u(x_0)=0$ by (\ref{eq:three-zero}). Hence $u \equiv 0$ in $\Omega$, and therefore $u \equiv 0$ \,a.e. in $\R^N$.
\end{proof}

In the following, we wish to extend the maximum principle to functions which satisfy the inequality $\loglap u \ge 0$ in weak sense. This also allows us to formulate necessary and sufficient spectral conditions on the validity of the maximum principle as given in Theorem~\ref{sec:maxim-princ-bound-characterization}. We need to introduce an appropriate space of functions $u$ which may have nonzero values outside of $\Omega$ and which allows to give a meaning to the inequality $\loglap u \ge 0$ in $\Omega$ in weak sense.
For this, we first let $\cW(\Omega)$ denote the space of all functions $u \in L^2(\Omega)$ such that
$$
{\mathfrak b}(u,\Omega):= \frac{1} 2 \int_{\Omega} \int_{\Omega}(u(x)-u(y))^2\k(x-y) dx dy <+\infty.
$$
Then $\cW(\Omega)$ is a Hilbert space with scalar product given by
$$
\langle u,v \rangle_{\cW(\Omega)}: = {\mathfrak b}(u,v,\Omega) + \langle u,v \rangle_{L^2(\Omega)},
$$
where
$$
{\mathfrak b}(u,v,\Omega):= \frac{1} 2 \int_{\Omega} \int_{\Omega}(u(x)-u(y))(v(x)-v(y))\k(x-y) dx dy.
$$
We also observe that, if $u \in \cW(\Omega)$ satisfies
$$
\int_{\Omega}u^2(x)\kappa_\Omega(x)\,dx < +\infty,
$$
then the trivial extension $\tilde u$ of $u$ to $\R^N$ is contained in $\cH(\Omega)$, and
\begin{equation}
  \label{eq:kappa-omega-representation}
\cE(\tilde u,\tilde u) = {\mathfrak b}(u,\Omega) + \int_{\Omega}u^2(x)\kappa_\Omega(x)\,dx.
\end{equation}
This follows immediately from (\ref{eq:alternative-representation-0}). In the following, we often identify $u$ with $\tilde u$.
We shall need the following analogue of Theorem~\ref{density} for the space $\cW(\Omega)$.

\begin{proposition}\label{density_1}
Let $\Omega \subset \R^N$ be a bounded Lipschitz domain. Then $C^\infty_c(\Omega)$ is dense in $\cW(\Omega)$. Moreover, if $u \in \cW(\Omega)$ is nonnegative, we have:
\begin{itemize}
\item[(i)] There exists a sequence of functions $u_n \in \cH(\Omega)$ with compact support in $\Omega$, $0 \le u_n \le u_{n+1} \le u$ for all $n \in \N$ and $u_n \to u$ in $\cW(\Omega)$.
\item[(ii)] There exists a sequence of nonnegative functions $u_n \in C^\infty_c(\Omega)$, $n \in \N$ with $u_n \to u$ in $\cW(\Omega)$.
\end{itemize}
\end{proposition}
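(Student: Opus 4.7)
The proof will closely follow that of Theorem~\ref{density}, with adjustments reflecting the fact that the $\cW(\Omega)$-norm differs from the $\cH(\Omega)$-norm precisely by the nonnegative Killing-measure contribution $\int_\Omega u^2\kappa_\Omega\,dx$ appearing in (\ref{eq:alternative-representation-0}) and (\ref{eq:kappa-omega-representation}). After the standard reduction to the case $u \ge 0$ via $u = u^+ - u^-$, my plan for (i) is to use the same cutoff family $\phi_r$ as in the proof of Theorem~\ref{density} and set $u_n := u(1-\phi_{r_n})$ for a decreasing null sequence $(r_n)$. Each $u_n$ is nonnegative, satisfies $0 \le u_n \le u_{n+1} \le u$, has compact support in $\Omega$, and therefore lies in $\cH(\Omega)$ since $\kappa_\Omega$ is locally bounded on compacts of $\Omega$.

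The core task is then the convergence $\|u\phi_r\|_{\cW(\Omega)} \to 0$ as $r \to 0^+$. The $L^2$-contribution $\int (u\phi_r)^2 \le \int_{A_{2r}} u^2 \to 0$ is immediate by dominated convergence. For the bilinear part, the pointwise splitting
\[
(u(x)\phi_r(x) - u(y)\phi_r(y))^2 \le 2\phi_r(x)^2(u(x)-u(y))^2 + 2\,u(y)^2(\phi_r(x)-\phi_r(y))^2
\]
gives ${\mathfrak b}(u\phi_r,\Omega) \le T_1(r) + T_2(r)$. The first term vanishes in the limit because $\phi_r \to 0$ pointwise on $\Omega$ while $(u(x)-u(y))^2\k(x-y)$ is integrable on $\Omega \times \Omega$ by the assumption $u \in \cW(\Omega)$. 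For $T_2(r) = \int_\Omega u(y)^2 F_r(y)\,dy$ with $F_r(y) := \int_\Omega (\phi_r(x)-\phi_r(y))^2 \k(x-y)\,dx$, I would combine the Lipschitz bound $|\phi_r(x)-\phi_r(y)| \le \min\{|x-y|/r,\,1\}$ with the support identity $\phi_r(y)=0$ for $\rho(y)\ge 2r$ and the Lipschitz regularity of $\partial\Omega$ to obtain the uniform-in-$r$ estimate $F_r(y) \le C\bigl(1+\log(1/\rho(y))\bigr)$ on $\Omega$, together with the pointwise decay $F_r(y) \to 0$ for each $y \in \Omega$. Invoking the logarithmic boundary Hardy inequality proved in the appendix yields $u^2 \log(1/\rho) \in L^1(\Omega)$ for $u \in \cW(\Omega)$, and a final dominated convergence argument delivers $T_2(r) \to 0$.

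For (ii), each $u_n$ from (i) belongs to $\cH(\Omega)$ with compact support in $\Omega$, so Theorem~\ref{density}(ii) supplies nonnegative $u_{n,k} \in C^\infty_c(\Omega)$ with $u_{n,k} \to u_n$ in $\cH(\Omega)$ as $k \to \infty$. Since $\|v\|_{\cW(\Omega)} \le \|v\|_{\cH(\Omega)}$ for $v \in \cH(\Omega)$ by the nonnegativity of $\kappa_\Omega$ in (\ref{eq:kappa-omega-representation}), a standard diagonal extraction produces the desired nonnegative sequence in $C^\infty_c(\Omega)$ converging to $u$ in $\cW(\Omega)$, and the density of $C^\infty_c(\Omega)$ in $\cW(\Omega)$ follows by splitting a general $u \in \cW(\Omega)$ as $u^+ - u^-$. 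The main obstacle of this plan is establishing the $\log(1/\rho)$-bound on $F_r$: it requires a careful case analysis near the boundary, where $\rho \sim r$ and the crude estimate $F_r(y) \le C(1+\log(1/r))$ is absorbed into $\log(1/\rho)$, versus the interior region $\Omega \setminus A_{2r}$, where $\phi_r(y)=0$ and the thinness of the strip $A_{2r}$ gives a more geometric decay. This uniform bound is what enables the appendix's Hardy inequality to convert the pointwise decay of $F_r$ into genuine $\cW(\Omega)$-convergence.
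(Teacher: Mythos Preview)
Your proposal contains a circularity. The key step in your argument for (i) is the dominated-convergence bound $F_r(y) \le C\bigl(1+\log(1/\rho(y))\bigr)$, which you then integrate against $u^2$ by invoking the logarithmic boundary Hardy inequality from the appendix to conclude $u^2\log(1/\rho) \in L^1(\Omega)$ for $u \in \cW(\Omega)$. But Proposition~\ref{log-boundary-harnack-lip-bound} is stated and proved only for $u \in C^\infty_c(\Omega)$; as Remark~\ref{a-posteriori-remark} makes explicit, its extension to all of $\cW(\Omega)$ \emph{uses} the density of $C^\infty_c(\Omega)$ in $\cW(\Omega)$, which is precisely the proposition you are trying to prove. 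So the argument, as written, is circular within the paper's logical structure. (A minor side point: the inequality $\|v\|_{\cW(\Omega)} \le \|v\|_{\cH(\Omega)}$ is not quite right; what holds is $\|v\|_{\cW(\Omega)} \le C\|v\|_{\cH(\Omega)}$, via (\ref{eq:FKV-eq}).)

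The paper avoids the circularity by a much simpler reduction that you are missing: first truncate, setting $u_k := \min\{u,k\}$, and show $u_k \to u$ in $\cW(\Omega)$ by monotone convergence applied to $v_k = (u-k)^+$. This reduces to the case where $u$ is \emph{bounded}. For bounded $u$ on a bounded Lipschitz domain one has $\int_\Omega u^2 \log(1/\rho)\,dx < \infty$ trivially, since $\log(1/\rho) \in L^1(\Omega)$. By (\ref{eq:kappa-omega-representation}) and (\ref{eq:upper-estimate-kappa-omega}), the zero extension of $u$ then lies in $\cH(\Omega)$, and both (i) and (ii) follow directly from Theorem~\ref{density} together with the norm comparison $\|\cdot\|_{\cW(\Omega)} \le C\|\cdot\|_{\cH(\Omega)}$. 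No Hardy inequality is needed. Your cutoff and $F_r$ analysis is essentially reproving the heart of Theorem~\ref{density} in the $\cW$-setting; the paper instead reduces to Theorem~\ref{density} itself after the truncation step.
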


\begin{proof}
Since for $u \in \cW(\Omega)$ we have $u^\pm \in \cW(\Omega)$, it suffices again to consider a nonnegative function $u \in \cW(\Omega)$. For $k \in \N$, let $u_k := \min \{u,k \}: \Omega \to \R$. Then we have that $0 \le u_k \le u_{k+1} \le u$ on $\Omega$
and
$$
|u_k(x)-u_k(y)| \le |u(x)-u(y)| \qquad \text{for all $x,y \in \Omega$.}
$$
From this it readily follows that $u_k \in \cW(\Omega)$. Moreover, for $v_k:= u-u_k=(u-k)^+$ we have that
$$
v_j(x) \le v_k(x) \quad \text{and}\quad |v_j(x)-v_j(y)| \le |v_k(x)-v_k(y)| \qquad \text{for $x,y \in \Omega$, $k<j$.}
$$
Since $v_k \to 0$ a.e. in $\Omega$, it follows by monotone convergence that
$$
\|u-u_k\|_{\cW(\Omega)}^2 =\|v_k\|^2_{\cW(\Omega)}= \int_{\Omega}(v_k(x)-v_k(y))^2 \k(x-y)\,dx dy + \int_\Omega v_k^2(x)\,dx \to 0
$$
as $k \to \infty$. Hence it suffices from now on to consider the case where $u$ is bounded,  which implies that $u \in \cW(\Omega) \cap L^2(\Omega,\log \frac{1}{\dist(x,\partial \Omega)})$. We extend $u$ on $\R^N$ by setting $u \equiv 0$ on $\R^N \setminus \Omega$. As remarked above, we then have that $u \in \cH(\Omega)$. Consequently, since $\|v\|_{\cW(\Omega)} \le  C \|v\|_{\cH(\Omega)}$ for every $v \in \cH(\Omega)$ with some constant $C>0$, the claim now follows from Theorem \ref{density}.
\end{proof}

\begin{remark}
\label{a-posteriori-remark}
{\rm Let $\Omega \subset \R^N$ be a bounded Lipschitz domain. From a logarithmic boundary Hardy inequality which we will prove in Proposition~\ref{log-boundary-harnack-lip-bound} in the Appendix, it follows that 
$$
\int_{\Omega} \kappa_\Omega(x) u^2(x)  \,dx \le C\Bigl( {\mathfrak b}(u,\Omega) + \|u\|_{L^2(\Omega)}^2\Bigr)\qquad \text{for all $u \in C^\infty_c(\Omega)$}    
$$
with a constant $C>0$, see Corollary~\ref{log-boundary-harnack-lip-bound-corollary} below. Since moreover $C^\infty_c(\Omega)$ is dense in $\cW(\Omega)$ by Proposition~\ref{density_1}, we deduce a posteriori from (\ref{eq:FKV-eq}) and (\ref{eq:kappa-omega-representation}) that the spaces $\cW(\Omega)$ and $\cH(\Omega)$ are isomorphic in this case -- with equivalent norms -- via the trivial extension map $\cW(\Omega) \to \cH(\Omega)$, $u \mapsto \tilde u$.
This fact may be of independent interest, but we will not use it in the paper since it does not simplify the proofs.}
\end{remark}

Next, we let $\cV(\Omega)$ denote the space of all measurable functions $u \in L^1_0(\R^N)$ such that the restriction of $u$ to $\Omega$ is contained in $\cW(\Omega)$. The following observation will allow us to define a weak notion of the inequality $\loglap u \ge 0$ for functions in $\cV(\Omega)$.

\begin{lemma}
\label{well-defined-lemma}
The quantities $\cE(u,v)$ and $\cE_L(u,v)$ are well defined for $u \in \cV(\Omega)$ and $v \in \cH(\Omega)$ with
$\supp v \subset \subset \Omega$.
\end{lemma}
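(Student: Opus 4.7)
Here is my plan to establish Lemma \ref{well-defined-lemma}. Set $K := \supp v$, so $K$ is a compact subset of $\Omega$ with $d := \dist(K, \partial \Omega) > 0$. Fix $r_0 := \min\{1, d/2\}$, and split the kernel as $\k = \k_1 + \k_2$, where $\k_1(z) := c_N 1_{B_{r_0}}(z)|z|^{-N}$ carries the singularity and $\k_2(z) := c_N 1_{B_1 \setminus B_{r_0}}(z)|z|^{-N}$ is bounded (by $c_N r_0^{-N}$) and compactly supported. Correspondingly, write $\cE(u,v) = \cE^{(1)}(u,v) + \cE^{(2)}(u,v)$, where $\cE^{(i)}$ is defined with $\k_i$ in place of $\k$.

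For the singular piece $\cE^{(1)}(u,v)$, the integrand vanishes unless $v(x) \neq v(y)$, which forces at least one of $x,y$ to lie in $K$; combined with $|x-y| \le r_0 \le d/2$, this places both $x$ and $y$ inside $K + \overline{B_{r_0}} \subset \Omega$. Hence $\cE^{(1)}(u,v)$ reduces to an integral over $\Omega \times \Omega$ with kernel $\k_1 \le \k$, and Cauchy--Schwarz yields
\[
|\cE^{(1)}(u,v)| \le \bigl(2\mathfrak{b}(u,\Omega)\bigr)^{1/2} \|v\|_{\cH(\Omega)},
\]
which is finite since $u|_\Omega \in \cW(\Omega)$ and $v \in \cH(\Omega)$. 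For the bounded piece $\cE^{(2)}(u,v)$, the support restriction implies both $x,y \in K + \overline{B_1}$ whenever the integrand does not vanish; expanding $(u(x)-u(y))(v(x)-v(y))$ into four products and using $\k_2 \le c_N r_0^{-N} 1_{B_1}(x-y)$ gives terms such as $\int_K |u(x)v(x)|\,dx$ and $\int_K |v(y)|\int_{B_1(y)}|u(x)|\,dx\,dy$. Each is bounded by a constant times $\|u\|_{L^1(K+B_1)} \|v\|_{L^2(K)}$, and $u \in L^1(K+B_1)$ follows at once from $u \in L^1_0(\R^N)$ since $K+B_1$ is bounded. This settles $\cE(u,v)$.

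For $\cE_L(u,v) = \cE(u,v) - \int_{\R^N} (\j * u - \rho_N u) v\,dx$, it remains to check the lower-order term. Since $v$ vanishes outside $K \subset \Omega$, the pointwise product $u v$ agrees with $u|_\Omega \cdot v$, and both factors are in $L^2(K)$; thus $\rho_N \int u v\,dx$ is finite. For the convolution, note that $\j(z) \le 2^N c_N (1+|z|)^{-N}$ because $1+|z| \le 2|z|$ on $\R^N \setminus B_1$. Hence Lemma \ref{continuity-convolution} applies to $v := \j$ and gives that $\j * u$ is well-defined and continuous on $\R^N$, and in particular bounded on the compact set $K$. Therefore $\int_{\R^N}(\j * u) v\,dx = \int_K (\j*u)\,v\,dx$ is controlled by $\|\j*u\|_{L^\infty(K)} \|v\|_{L^2(K)} |K|^{1/2}$, which is finite.

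The main technical point — and where care is needed — is the handling of $\cE^{(1)}(u,v)$: without localizing to $K + \overline{B_{r_0}}$ first, one would need a global energy bound on $u$, which is not available since $u$ is only in $\cW(\Omega)$ on the interior and merely $L^1_0$ outside. The trick of shrinking the singular part of the kernel to radius $r_0 = \min\{1, d/2\}$, so that the nonvanishing of $v(x)-v(y)$ combined with $|x-y| \le r_0$ automatically keeps both variables inside $\Omega$, is what allows Cauchy--Schwarz to close. The rest reduces to bounded kernels on bounded sets, which is essentially routine.
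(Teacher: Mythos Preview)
Your proof is correct. The approach differs from the paper's in the choice of decomposition: the paper splits the \emph{domain} of integration as $\Omega \times \Omega$ versus $\Omega' \times (\R^N \setminus \Omega)$ (with $\Omega' = \supp v$), applying Cauchy--Schwarz directly on the first piece and noting that on the second piece the kernel $\k(x-y)$ is automatically bounded since $\dist(\Omega', \R^N \setminus \Omega) > 0$. You instead split the \emph{kernel} as $\k = \k_1 + \k_2$ at radius $r_0 = \min\{1,d/2\}$, so that the singular part $\k_1$ forces both variables into $\Omega$ by geometry, while $\k_2$ is bounded. Both routes localize the singular integral to $\Omega \times \Omega$ where $\mathfrak b(u,\Omega)$ controls it, and both handle the remainder via $u \in L^1_0(\R^N)$ on a bounded set; the paper's version has the minor advantage of yielding explicit bounds with constants depending only on $\Omega$ and $\supp v$ (which are quoted later in the proof of Proposition~\ref{sec:maxim-princ-bound-u-} to pass to limits), but your estimates have the same uniformity in $v$ for fixed $K$, so this carries over. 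One small imprecision: in your treatment of $\cE^{(2)}$, the diagonal term $\int_K |u(x)v(x)|\,dx$ is controlled by $\|u\|_{L^2(K)}\|v\|_{L^2(K)}$ (using $u|_\Omega \in \cW(\Omega) \subset L^2(\Omega)$) rather than by $\|u\|_{L^1(K+B_1)}\|v\|_{L^2(K)}$ as you wrote, but this does not affect the conclusion.
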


\begin{proof}
Setting $\Omega':= \supp v$, we see that
	\begin{align*}
	&\frac{1}{2}\int_{\R^N} \int_{\R^N} |u(x)-u(y)|\cdot|v(x)-v(y)|\k(x-y) dxdy = \frac{1}{2} \int_{\Omega} \int_{\Omega} \dots + \int_{\Omega'} \int_{\R^N \setminus \Omega} \dots
\\
&\leq \bigl({\mathfrak b}(u,\Omega)\bigr)^{\frac{1}{2}} \bigl({\mathfrak b}(v,\Omega)\bigr)^{\frac{1}{2}} +\int_{\Omega'} |v(x)| \int_{\R^N\setminus \Omega} |u(x)-u(y)| \k(x-y) dydx,
	\end{align*}
where
\begin{align*}
\int_{\Omega'} |v(x)| &\int_{\R^N\setminus \Omega} |u(x)-u(y)|\k(x-y) dydx\\
&\le \int_{\Omega'}|v(x)| |u(x)| \kappa_{\Omega}(x)\,dx + \int_{\Omega'}
|v(x)| \int_{\R^N\setminus \Omega} |u(y)|\k(x-y) dydx \\
&\le c_1 \|v\|_{L^2(\Omega)} \|u\|_{L^2(\Omega)} + c_2 \|v\|_{L^1(\Omega)} \|u\|_{L^1_0}
\end{align*}
with
$$
c_1:= \sup_{x \in \Omega'} \kappa_\Omega(x)  \qquad \text{and}\qquad c_2: = \sup_{x \in \Omega',y \in \R^N \setminus \Omega} \k(x-y)(1+|y|)^N.
$$
Since $\Omega' \subset \subset \Omega$, the values $c_1$ and $c_2$ are finite. It thus follows that $\cE(u,v)$ is well-defined in Lebesgue sense with
$$
|\cE(u,v)| \le \bigl({\mathfrak b}(u,\Omega)\bigr)^{\frac{1}{2}} \bigl({\mathfrak b}(v,\Omega)\bigr)^{\frac{1}{2}} + c_1 \|v\|_{L^2(\Omega)} \|u\|_{L^2(\Omega)} + c_2 \|v\|_{L^1(\Omega)} \|u\|_{L^1_0}
$$
and constants $c_1,c_2$ depending only on $\Omega$ and $\supp v \subset \subset \Omega$.
Moreover, we have that
$$
\int_{\R^N}\int_{\R^N} |u(x)| |v(y)| \j(x-y)\,dx dy =  \int_{\Omega'}|v(y)| \int_{\R^N}|u(x)|  \j(x-y)\,dx dy
 \le c_3 \|v\|_{L^1(\Omega)} \|u\|_{L^1_0}
$$
with
$$
c_3 := \sup_{y \in \Omega',x \in \R^N} \j(x-y)(1+|x|)^N.
$$
Hence $\cE_L(u,v)$ is also well-defined with
$$
|\cE_L(u,v)| \le \bigl({\mathfrak b}(u,\Omega)\bigr)^{\frac{1}{2}} \bigl({\mathfrak b}(v,\Omega)\bigr)^{\frac{1}{2}} + (c_1+|\rho_N|) \|v\|_{L^2(\Omega)} \|u\|_{L^2(\Omega)} + (c_2+c_3) \|v\|_{L^1(\Omega)} \|u\|_{L^1_0}.
$$
The proof is complete.\end{proof}

We now recall the following definition from the introduction.

\begin{definition}
\label{maximum-principle-definition}
\begin{enumerate}
\item[i)] For a function $u \in \cV(\Omega)$, we say that $\loglap u \ge 0$ in $\Omega$ (in weak sense) if $\cE_L(u, \phi) \ge 0$ for all nonnegative $\phi \in C_c^\infty(\Omega)$.
\item[ii)] We say that $\loglap$ satisfies the maximum principle on $\Omega$ if for every $u \in \cV(\Omega)$ with
$\loglap u \ge 0$ in $\Omega$ and $u \ge 0$ in $\R^N \setminus \Omega$, we have $u \ge 0$ in $\R^N$.
\end{enumerate}
\end{definition}

\begin{remark}
\label{maximum-principle-remark}{\rm 
If $u \in L^1_0(\R^N)$ is uniformly Dini continuous on $\bar \Omega$, then $u \in \cV(\Omega)$. Moreover, if $\loglap u \ge 0$ in pointwise sense, then this also holds in weak sense. This follows since
 \begin{equation*}
\int_{\Omega} [\loglap u] \varphi\,dx = \cE_L(u,\varphi)
\end{equation*}
for functions $u \in L^1_0(\R^N)$ which are uniformly Dini continuous on $\bar \Omega$ and $\varphi \in C^\infty_c(\Omega)$. Indeed, in this case we have
$$
\int_{\Omega}|\varphi(x)| \int_{\R^N}|u(x)-u(y)|\k(x-y)\,dy dx \le \int_{\Omega}|\varphi(x)| \int_0^1 \frac{\omega_{u,\Omega}(r)}{r}\,dr \,dx  < +\infty,
$$
and therefore, by Fubini's theorem,
 \begin{align*}
\int_{\R^N}\varphi(x) \int_{\R^N}(u(x)-u(y))\k(x-y)&\,dy dx= \frac{1}{2}\Bigl (\int_{\R^N}\varphi(x) \int_{\R^N}(u(x)-u(y))\k(x-y)\,dy dx\\
 &+ \int_{\R^N}\varphi(y)\int_{\R^N}(u(y)-u(x))\k(x-y)\,dy dx\Bigr)= \cE(u,\varphi),
\end{align*}
which yields that
$$
\cE_L(u,\varphi)= \mathcal{E}(u,\varphi) - \int_{\R^N} \bigl( \j * u  - \rho_N u \bigr)\varphi\,dx  = \int_{\Omega} [\loglap u] \varphi\,dx.
$$
}
\end{remark}

In the proof of weak maximum principles, testing the equation in weak sense with $u^-$ is usually a key step. Related to this, we have the following useful property.

\begin{proposition}
\label{sec:maxim-princ-bound-u-}
Let $\Omega \subset \R^N$ be a bounded Lipschitz domain, and suppose that $u \in \cV(\Omega)$ satisfies $\loglap u \ge 0$ in $\Omega$ and $u \ge 0$ in $\R^N \setminus \Omega$. Then
$u^- \in \cH(\Omega)$ and
$$
\cE_L(u^-,u^-) \le 0.
$$
\end{proposition}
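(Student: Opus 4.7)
The plan is to reduce $u$ to its trivial restriction $\tilde u := u\,\mathbf{1}_\Omega$, which does lie in $\cH(\Omega)$, and to test the weak inequality against $u^-$ via an approximation argument combined with a pointwise sign analysis. The main obstacle will be that the weak hypothesis $\loglap u \ge 0$ only provides test functions $\phi \in C^\infty_c(\Omega)$, whereas $u^-$ need not have compact support in $\Omega$; moreover $u$ itself only lies in $\cV(\Omega)$, so its interaction with test functions through $\cE_L$ is not a priori controlled by the $\cH(\Omega)$-norm.

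First, from $|u^-(x)-u^-(y)|\le |u(x)-u(y)|$ for all $x,y$ together with $u\ge 0$ on $\R^N\setminus\Omega$, one sees that $u^-\in\cW(\Omega)$ and $u^-\equiv 0$ outside $\Omega$. Invoking Remark~\ref{a-posteriori-remark} (the trivial-extension isomorphism $\cW(\Omega)\to\cH(\Omega)$ for Lipschitz $\Omega$, which rests on the logarithmic boundary Hardy inequality of Corollary~\ref{log-boundary-harnack-lip-bound-corollary}), we conclude $u^-\in\cH(\Omega)$ and, likewise, $\tilde u\in\cH(\Omega)$. Next, a direct bookkeeping of the kernels $\k$ and $\j$ using $\k(z)+\j(z)=c_N|z|^{-N}$ yields, for every $\phi\in C^\infty_c(\Omega)$, the identity
\begin{equation*}
\cE_L(u,\phi)=\cE_L(\tilde u,\phi)-\int_\Omega W(x)\phi(x)\,dx,\qquad W(x):=c_N\int_{\R^N\setminus\Omega}\frac{u(y)}{|x-y|^N}\,dy,
\end{equation*}
where $W$ is finite and nonnegative on $\supp\phi$ (finite because $\supp\phi\subset\subset\Omega$ and $u\in L^1_0(\R^N)$, nonnegative because $u\ge 0$ outside $\Omega$). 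The weak hypothesis therefore forces
\begin{equation*}
\cE_L(\tilde u,\phi)\;\ge\;\int_\Omega W\phi\,dx\;\ge\;0 \qquad\text{for every nonnegative }\phi\in C^\infty_c(\Omega).
\end{equation*}

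By Theorem~\ref{density}(ii) I can then pick nonnegative $\phi_n\in C^\infty_c(\Omega)$ with $\phi_n\to u^-$ in $\cH(\Omega)$. Since $\cE_L$ is continuous on $\cH(\Omega)\times\cH(\Omega)$ (by Cauchy--Schwarz for $\mathcal{E}$ together with (\ref{eq:continuous-L-2-weak-L-1})), passing to the limit gives $\cE_L(\tilde u,u^-)\ge 0$. Decomposing $\tilde u=(\tilde u)^+-u^-$ (noting $(\tilde u)^-=u^-$ because $u\ge 0$ outside $\Omega$), this yields
\begin{equation*}
\cE_L(u^-,u^-)=\cE_L((\tilde u)^+,u^-)-\cE_L(\tilde u,u^-)\le\cE_L((\tilde u)^+,u^-).
\end{equation*}
To finish I will show that $\cE_L((\tilde u)^+,u^-)\le 0$: the polarized form of the representation in Proposition~\ref{E-L-alternative-representation} reduces this quantity to a double integral over $\Omega\times\Omega$ (the potential term $\int_\Omega(h_\Omega+\rho_N)(\tilde u)^+u^-\,dx$ vanishes because $(\tilde u)^+$ and $u^-$ have disjoint supports), whose integrand
\begin{equation*}
\bigl((\tilde u)^+(x)-(\tilde u)^+(y)\bigr)\bigl(u^-(x)-u^-(y)\bigr)=-(\tilde u)^+(x)u^-(y)-(\tilde u)^+(y)u^-(x)
\end{equation*}
is pointwise $\le 0$ since $(\tilde u)^+,u^-\ge 0$. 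Combining everything, $\cE_L(u^-,u^-)\le 0$, as required.
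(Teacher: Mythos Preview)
Your argument is correct, but it follows a different route from the paper's, and the difference is worth noting.

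The paper deliberately avoids Remark~\ref{a-posteriori-remark} (the isomorphism $\cW(\Omega)\cong\cH(\Omega)$ via trivial extension, which rests on the logarithmic boundary Hardy inequality from the Appendix). Instead it proves $u^-\in\cH(\Omega)$ \emph{as a consequence} of the weak inequality: approximating $u^-$ in $\cW(\Omega)$ by an increasing sequence $0\le v_n\le u^-$ with compact support (Proposition~\ref{density_1}(i)), it tests against $v_n$, splits $\cE(u,v_n)=\mathfrak b(u,v_n,\Omega)+\text{(exterior term)}$, and uses sign information and monotone convergence to bound $\int_\Omega (u^-)^2\kappa_\Omega\,dx$; this simultaneously shows $u^-\in\cH(\Omega)$ and gives $\cE_L(u^-,u^-)\le 0$ in one computation.

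You instead spend the Hardy inequality up front to place $\tilde u$ and $u^-$ in $\cH(\Omega)$, after which the argument becomes softer: the identity $\cE_L(u,\phi)=\cE_L(\tilde u,\phi)-\int_\Omega W\phi$ transfers the supersolution property to $\tilde u\in\cH(\Omega)$, continuity of $\cE_L$ on $\cH(\Omega)\times\cH(\Omega)$ together with Theorem~\ref{density}(ii) lets you test with $u^-$, and the polarized form of Proposition~\ref{E-L-alternative-representation} (valid since both sides are bounded bilinear forms on $\cH(\Omega)$) handles the sign of $\cE_L((\tilde u)^+,u^-)$. The trade-off is clear: the paper's proof is self-contained and does not need the Appendix, while yours is more transparent once that result is granted, essentially reducing the proposition to a standard ``test with the negative part'' step inside the Hilbert space $\cH(\Omega)$.
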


\begin{proof}
We first note that
\begin{equation}
  \label{eq:c-infty-to-compact-support-H}
\text{$\cE_L(u,w) \ge 0$ for all nonnegative $w \in \cH(\Omega)$ with $\supp w \subset \subset \Omega$.}
\end{equation}
Indeed, for every such $w$, Theorem~\ref{density}(ii) yields a sequence of nonnegative functions $w_n \in C^\infty_c(\Omega)$ with $w_n \to w$ in $\cH(\Omega)$. Moreover, an inspection of the proof of Theorem~\ref{density}(ii) shows that there exists a compact set $K \subset \Omega$ with $\supp w_n \subset K$ for all $n \in \N$. Hence the estimates in the proof of Lemma~\ref{well-defined-lemma} show that
$$
\cE_L(u,w) = \lim_{n \to \infty}\cE_L(u,w_n) \ge 0,
$$
as claimed in (\ref{eq:c-infty-to-compact-support-H}).

Next we note that $u^- \in \cW(\Omega)$ since $u \in \cW(\Omega)$. Hence, by Proposition~\ref{density_1}(i),
 there exists a sequence of nonnegative functions $v_n \in \cH(\Omega)$ with compact support in $\Omega$,
  $0 \le v_n \le v_{n+1} \le u^-$ in $\Omega$ and $v_n  \to u^-$ in $\cW(\Omega)$ as $n \to \infty$. Then
$$
\cE(u,v_n) = {\mathfrak b}(u,v_n,\Omega)+ \int_{\Omega}v_n(x) \int_{\R^N \setminus \Omega}(u(x)-u(y))\k(x-y) dxdy,
$$
where, as $n \to \infty$,
\begin{align*}
{\mathfrak b}(u,v_n,\Omega)\to {\mathfrak b}(u,u^-,\Omega) &= \frac{1}{2}\int_{\Omega} \int_{\Omega}(u(x)-u(y)) \cdot (u^-(x)-u^-(y))\k(x-y)\ dxdy\\
& \le - \frac{1}{2}\int_{\Omega} \int_{\Omega}(u^-(x)-u^-(y))^2\k(x-y)\ dxdy = -{\mathfrak b}(u^-,\Omega)
\end{align*}
and, since $0 \le v_n \le u^-$ in $\Omega$ and $u \ge 0$ in $\R^N \setminus \Omega$,
\begin{align*}
\int_{\Omega} v_n(x) \int_{\R^N \setminus \Omega}(u(x)-u(y))\k(x-y) dy dx
& \le \int_{\Omega} v_n(x)u(x) \int_{\R^N \setminus \Omega}\k(x-y) dy dx \\
& = -\int_{\Omega} v_n(x)u^-(x) \kappa_{\Omega}(x)\,dx.
\end{align*}
With the help of (\ref{eq:c-infty-to-compact-support-H}), we thus infer that
\begin{align*}
0 &\le \int_{\Omega} v_n(x)u^-(x) \kappa_{\Omega}(x)\,dx  \le -\int_{\Omega} v_n(x) \int_{\R^N \setminus \Omega}(u(x)-u(y))\k(x-y) dy dx \\
&= {\mathfrak b}(u,v_n,\Omega) - \cE(u,v_n) = {\mathfrak b}(u,v_n,\Omega) - \cE_L(u,v_n) - \int_{\R^N} \bigl( \j * u  -\rho_N u \bigr)v_n\,dx\\
& \le {\mathfrak b}(u,v_n,\Omega) - \int_{\R^N} \bigl( \j * u  -\rho_N u \bigr)v_n\,dx\\
&\le  o(1)- {\mathfrak b}(u^-,\Omega) - \int_{\R^N} \bigl( \j * u  -\rho_N u \bigr)u^-\,dx\\
&=  o(1)- {\mathfrak b}(u^-,\Omega) - \int_{\R^N} [\j * u]u^-\,dx - \rho_N \|u^-\|_{L^2(\Omega)}^2\\
&\le  o(1)- {\mathfrak b}(u^-,\Omega) +\int_{\R^N} [\j * u^-]u^-\,dx - \rho_N \|u^-\|_{L^2(\Omega)}^2 \qquad \text{as $n \to \infty$.}
\end{align*}
By monotone convergence, we thus conclude that
$$
\int_{\Omega} [u^-(x)]^2 \kappa_{\Omega}(x)\,dx  \le - {\mathfrak b}(u^-,\Omega) +\int_{\R^N} [\j * u^-]u^-\,dx - \rho_N \|u^-\|_{L^2(\Omega)}^2,
$$
and therefore $u^- \in \cH(\Omega)$ with
$$
\cE_L(u^-,u^-) = {\mathfrak b}(u^-,\Omega)+ \int_{\Omega} [u^-(x)]^2 \kappa_{\Omega}(x)\,dx
-\int_{\R^N} [\j * u^-]u^-\,dx + \rho_N \|u^-\|_{L^2(\Omega)}^2 \le 0.
$$
The proof is complete.\end{proof}

We may now complete the proof of Theorem~\ref{sec:maxim-princ-bound-characterization}, which we restate here for the reader's convenience.
\begin{theorem}
 \label{sec:maxim-princ-bound-characterization-section}
Let $\Omega \subset \R^N$ be a bounded Lipschitz domain. Then $\loglap$ satisfies the maximum principle on $\Omega$ if and only if $\lambda_1^L(\Omega)>0$.
\end{theorem}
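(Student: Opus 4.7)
The plan is to prove the two implications separately, using the variational characterization of $\lambda_1^L(\Omega)$ from Theorem~\ref{pr eigen 1}(i) together with Proposition~\ref{sec:maxim-princ-bound-u-} as the main engine.

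For the sufficiency direction, assume $\lambda_1^L(\Omega) > 0$, and let $u \in \cV(\Omega)$ satisfy $\loglap u \ge 0$ in $\Omega$ and $u \ge 0$ in $\R^N \setminus \Omega$. Proposition~\ref{sec:maxim-princ-bound-u-} delivers $u^- \in \cH(\Omega)$ with $\cE_L(u^-,u^-) \le 0$. On the other hand, the variational characterization in Theorem~\ref{pr eigen 1}(i) yields $\cE_L(u^-,u^-) \ge \lambda_1^L(\Omega)\,\|u^-\|_{L^2(\Omega)}^2$. Combining the two inequalities with $\lambda_1^L(\Omega) > 0$ forces $\|u^-\|_{L^2(\Omega)} = 0$, so $u^- \equiv 0$ and $u \ge 0$ a.e.\ in $\R^N$.

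For the necessity direction I argue contrapositively: assuming $\lambda_1^L(\Omega) \le 0$, I would produce an explicit function violating the maximum principle. The natural candidate is $u := -\xi_1$, where $\xi_1$ is the strictly positive first eigenfunction from Theorem~\ref{pr eigen 1}(iii). Since $\xi_1 \in \cH(\Omega)$ vanishes outside the bounded set $\Omega$ and lies in $L^2(\Omega)$, a quick check (trivial extension plus Cauchy--Schwarz to see $\xi_1 \in L^1_0(\R^N)$, and the obvious bound ${\mathfrak b}(\xi_1,\Omega)\le \cE(\xi_1,\xi_1)$) places $\xi_1$, and hence $u$, in $\cV(\Omega)$. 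Clearly $u \equiv 0$ on $\R^N \setminus \Omega$. For any nonnegative $\phi \in C_c^\infty(\Omega)$, the eigenfunction identity gives
\[
\cE_L(u,\phi) = -\cE_L(\xi_1,\phi) = -\lambda_1^L(\Omega)\int_\Omega \xi_1 \phi \,dx \ge 0,
\]
because $\xi_1 > 0$, $\phi \ge 0$ and $-\lambda_1^L(\Omega) \ge 0$. Thus $\loglap u \ge 0$ in the weak sense of Definition~\ref{maximum-principle-definition-main}, yet $u = -\xi_1 < 0$ on $\Omega$, so the maximum principle fails.

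The two steps are entirely routine once the framework is in place; the only nontrivial ingredient is Proposition~\ref{sec:maxim-princ-bound-u-}, which encodes the delicate fact that testing with $u^-$ is legitimate even though $u$ is only assumed to lie in the larger space $\cV(\Omega)$. Since this has been established already, I anticipate no further obstacles in carrying out the argument.
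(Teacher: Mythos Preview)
Your proposal is correct and follows essentially the same approach as the paper: both directions match the paper's argument, using Proposition~\ref{sec:maxim-princ-bound-u-} together with the variational characterization of $\lambda_1^L(\Omega)$ for sufficiency, and taking $u=-\xi_1$ as a counterexample for necessity. Your only addition is the explicit verification that $\xi_1 \in \cV(\Omega)$, which the paper handles by simply noting $\cH(\Omega) \subset \cV(\Omega)$.
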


\begin{proof}
Suppose first that $\lambda_1^L(\Omega)>0$, and let $u \in \cV(\Omega)$ satisfy $\loglap u \ge 0$ in $\Omega$ and $u \ge 0$ in $\R^N \setminus \Omega$. By Proposition~\ref{sec:maxim-princ-bound-u-}, we then have $u^- \in \cH(\Omega)$ and
$$
\lambda_1^L(\Omega)\|u^-\|^2_{L^2(\Omega)} \le \cE_{L}(u^-,u^-) \le 0.
$$
Consequently, $u^- \equiv 0$, and therefore $u \ge 0$ in $\R^N$. Thus $\loglap$ satisfies the maximum principle on $\Omega$. Suppose now that $\lambda_1^L(\Omega)\le 0$, and let $\xi_1 \in \cH(\Omega) \subset \cV(\Omega)$ be the unique positive $L^2$-normalized eigenfunction of $\loglap$.
With $u:= -\xi_1$, we then have $u \equiv 0$ in $\R^N \setminus \Omega$, $u<0$ in $\Omega$ and $\loglap u = \lambda_1^L(\Omega)u \ge 0$ in $\Omega$.
Hence $\loglap$ does not satisfy the maximum principle on $\Omega$.
\end{proof}

It is now easy to deduce Corollary~\ref{non-max-principle-simple}, which again we restate for the reader's convenience.

\begin{corollary}
\label{sec:proposition-max-principle-spectral}
Let $\Omega \subset \R^N$ be a bounded Lipschitz domain. Then we have
$\lambda_1^L(\Omega) \le \log \lambda_1^1(\Omega)$,  where $\lambda_1^1(\Omega)$ is the first eigenvalue of the Dirichlet Laplacian $-\Delta$ on $\Omega$. Moreover, if $\lambda_1^1(\Omega) \le 1$, then $\loglap$ does not satisfy the maximum principle on $\Omega$.
\end{corollary}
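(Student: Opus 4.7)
The plan is to combine Theorem~\ref{sec:funct-analyt-fram-approximation-main}, which identifies $\lambda_1^L(\Omega)$ as the right derivative at $s=0$ of $s \mapsto \lambda_1^s(\Omega)$, with the comparison inequality of Musina--Nazarov which asserts that for every $s \in (0,1)$ one has
$$
\lambda_1^s(\Omega) \le \bigl[\lambda_1^1(\Omega)\bigr]^{s}.
$$
Since $\lambda_1^0(\Omega)=1$ (as $(-\Delta)^s u \to u$ in the limit $s \to 0^+$, so that the Rayleigh quotient at $s=0$ equals $\|u\|_{L^2}^{-2}\|u\|_{L^2}^{2}=1$), the right hand side of this inequality also equals $1$ at $s=0$. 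Dividing by $s>0$ and taking the limit $s \to 0^+$, the left hand side tends to $\lambda_1^L(\Omega)$ by Theorem~\ref{sec:funct-analyt-fram-approximation-main}, while the right hand side satisfies
$$
\lim_{s \to 0^+}\frac{[\lambda_1^1(\Omega)]^s-1}{s} = \log \lambda_1^1(\Omega).
$$
This yields the first assertion $\lambda_1^L(\Omega) \le \log \lambda_1^1(\Omega)$.

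For the second assertion, if $\lambda_1^1(\Omega) \le 1$, then $\log \lambda_1^1(\Omega) \le 0$, so $\lambda_1^L(\Omega) \le 0$. By Theorem~\ref{sec:maxim-princ-bound-characterization}, the maximum principle for $\loglap$ on the bounded Lipschitz domain $\Omega$ is equivalent to $\lambda_1^L(\Omega)>0$, so it fails in this case.

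I do not anticipate a real obstacle: the only nontrivial ingredient is the Musina--Nazarov inequality, which is quoted from the literature, and the passage to the limit is straightforward since Theorem~\ref{sec:funct-analyt-fram-approximation-main} already guarantees differentiability of $s \mapsto \lambda_1^s(\Omega)$ at $s=0$. The most delicate point would be to verify carefully that $\lambda_1^s(\Omega)\to 1$ as $s\to 0^+$, but this is implicit in the proof of Theorem~\ref{sec:funct-analyt-fram-approximation-main} (and stated explicitly there).
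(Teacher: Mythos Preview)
Your proof is correct and follows essentially the same route as the paper: apply the Musina--Nazarov inequality $\lambda_1^s(\Omega) \le [\lambda_1^1(\Omega)]^s$, subtract $1$, divide by $s$, and pass to the limit $s\to 0^+$ using Theorem~\ref{sec:funct-analyt-fram-approximation-main}; then invoke Theorem~\ref{sec:maxim-princ-bound-characterization} for the maximum principle part.
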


\begin{proof}
By \cite[Corollary 4]{musina-nazarov}, we have
$$
\lambda_1^s(\Omega) < \bigl[\lambda_1^1(\Omega)\bigr]^s \qquad \text{for $s \in (0,1)$.}
$$
Consequently,
$$
\lambda_1^L(\Omega) = \lim_{s \to 0^+} \frac{\lambda_1^s(\Omega)-1}{s}\le \lim_{s \to 0^+} \frac{\bigl[\lambda_1^1(\Omega)\bigr]^s-1}{s}
= \log \lambda_1^1(\Omega).
$$
If $\lambda_1^1(\Omega) \le 1$, we conclude that $\lambda_1^L(\Omega) \le 0$, and hence $\loglap$ does not satisfy the maximum principle on $\Omega$ by Theorem~\ref{sec:maxim-princ-bound-characterization}.
\end{proof}

The following corollary is a variant of Proposition~\ref{thm-strong-max-principle}, which applies to functions $u \in \cV(\Omega)$ satisfying the inequality $\loglap u \ge 0$ in weak sense.

\begin{corollary}
\label{sec:corollary-max-principle-h-function}
Let $\Omega \subset \R^N$ be a bounded Lipschitz domain, and suppose that $h_\Omega + \rho_N \ge 0$ on $\Omega$. Then $\lambda_1^L(\Omega) >0$, and thus $\loglap$ satisfies the maximum principle on $\Omega$.
\end{corollary}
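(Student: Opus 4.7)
The strategy is to extract strict positivity of $\lambda_1^L(\Omega)$ from the alternative representation of the quadratic form in Proposition~\ref{E-L-alternative-representation}, and then to quote Theorem~\ref{sec:maxim-princ-bound-characterization-section}.

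First I would exploit the identity
$$
\cE_L(u,u) = \frac{c_N}{2} \int_{\Omega} \int_{\Omega}\frac{(u(x)-u(y))^2}{|x-y|^N}\,dx\,dy + \int_\Omega [h_{\Omega}(x)+\rho_N]\, u^2(x)\,dx
$$
from Proposition~\ref{E-L-alternative-representation}. The hypothesis $h_\Omega+\rho_N\ge 0$ on $\Omega$ makes both summands nonnegative, so $\cE_L(u,u)\ge 0$ for every $u\in\cH(\Omega)$. Combined with the variational characterization in Theorem~\ref{pr eigen 1-section}(i), this immediately gives $\lambda_1^L(\Omega)\ge 0$.

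The next step is to promote this to strict positivity by contradiction. Assume $\lambda_1^L(\Omega)=0$ and let $\xi_1\in\cH(\Omega)$ be the nonnegative $L^2$-normalized principal eigenfunction produced by Theorem~\ref{pr eigen 1-section}, which satisfies $\xi_1>0$ a.e.\ in $\Omega$. Then $\cE_L(\xi_1,\xi_1)=0$, and since both terms in the representation above are nonnegative, each must vanish separately. Vanishing of the first term forces $\xi_1(x)=\xi_1(y)$ for a.e.\ $x,y\in\Omega$, i.e., $\xi_1\equiv c$ a.e.\ in $\Omega$ for some constant $c>0$. Vanishing of the second term then reduces to
$$
c^2 \int_\Omega \bigl[h_\Omega(x)+\rho_N\bigr] \,dx=0.
$$

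Finally, I would derive a contradiction from the Lipschitz boundary geometry via the lower bound \eqref{eq:lower-estimate-h-omega}, which asserts that $h_\Omega(x)\ge C_\Omega\log\frac{1}{\rho(x)}-\tilde C_\Omega$ on $\Omega$. Therefore $h_\Omega(x)+\rho_N\to+\infty$ as $\rho(x)\to 0$, so the integrand is strictly positive on an open boundary layer inside $\Omega$. Together with $h_\Omega+\rho_N\ge 0$ a.e., this implies $\int_\Omega [h_\Omega+\rho_N]\,dx>0$, contradicting the displayed identity. Hence $\lambda_1^L(\Omega)>0$, and Theorem~\ref{sec:maxim-princ-bound-characterization-section} concludes that $\loglap$ satisfies the maximum principle on $\Omega$.

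There is no real obstacle here: the content has been pre-packaged into Proposition~\ref{E-L-alternative-representation} and into the boundary divergence of $h_\Omega$. The only subtlety is recognizing that equality $\lambda_1^L(\Omega)=0$ forces $\xi_1$ to be constant on $\Omega$, which is incompatible with any $\xi_1\in\cH(\Omega)$ of unit $L^2$-norm because of the strictly positive weight $h_\Omega+\rho_N$ near $\partial\Omega$.
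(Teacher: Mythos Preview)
Your proposal is correct and follows essentially the same route as the paper: use the alternative representation of $\cE_L$ to get $\lambda_1^L(\Omega)\ge 0$, assume equality, force $\xi_1$ to be a positive constant on $\Omega$, and then derive a contradiction from the boundary blow-up of $h_\Omega$ via \eqref{eq:lower-estimate-h-omega}. The paper phrases the last step as ``$h_\Omega+\rho_N\equiv 0$ on $\Omega$, contradicting $h_\Omega(x)\to\infty$ as $\rho(x)\to 0$,'' which is equivalent to your positivity-of-the-integral argument.
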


\begin{proof}
By definition of $\lambda_1^L(\Omega)$ and (\ref{eq:alternative-representation}), it follows that $\lambda_1^L(\Omega) \ge 0$.
We claim that $\lambda_1^L(\Omega)>0$. Assuming by contradiction that $\lambda_1^L(\Omega)=0$, we let, as before, $\xi_1 \in \cH(\Omega)$ be the corresponding nonnegative $L^2$-normalized eigenfunction. Then, by (\ref{eq:alternative-representation}),
$$
0 = \lambda_1^L(\Omega)\|\xi_1\|_{L^2(\Omega)}^2 = \cE_L(\xi_1,\xi_1) = {\mathfrak b}(\xi_1,\Omega) +  \int_\Omega [h_{\Omega}(x)+\rho_N] \xi_1^2(x) \,dx \ge {\mathfrak b}(\xi_1,\Omega).
$$
From the definition of ${\mathfrak b}(\xi_1,\Omega)$, we then deduce that $\xi_1 \equiv c$ in $\Omega$ with a constant $c>0$. Then the above estimate gives
$$
0 \ge  c^2 \int_\Omega [h_{\Omega}(x)+\rho_N] \,dx
$$
and therefore $h_\Omega + \rho_N \equiv 0$ in $\Omega$ by assumption. This however contradicts the fact that $h_\Omega(x) \to \infty$ as $\dist(x,\partial \Omega) \to 0$. The proof is finished.
\end{proof}

Theorem~\ref{thm-strong-max-principle} and Corollary~\ref{sec:corollary-max-principle-h-function} show that it is useful to have estimates for $h_\Omega$ from below. In the following lemma, we consider the special case $\Omega = B_r$ for some $r>0$.

\begin{lemma}
\label{sec:maxim-princ-h-lower-bounds}
Let $\Omega = B_r$ for some $r>0$. Then we have
$$
h_\Omega(x) \ge 2 \log \frac{1}{r}   \qquad \text{for $x \in \Omega$,}
$$
and equality is attained at $x = 0$.
Consequently, $h_\Omega + \rho_N \ge 0$ on $\Omega$ if and only if
$$
r \le r_N:= 2 e^{\frac{1}{2}( \psi(\frac{N}{2})-\gamma)}.
$$
\end{lemma}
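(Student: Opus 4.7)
The plan is to compute $h_{B_r}$ essentially explicitly, identifying its minimizer as $x=0$, and then read off the condition on $r$.

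First I would rewrite $h_\Omega$ in the unified form
\[
h_\Omega(x)= c_N \int_{\R^N} \frac{1_{B_1(x)}(y)-1_\Omega(y)}{|x-y|^N}\,dy,
\]
which is legitimate for $x\in\Omega$ because near $y=x$ both indicators equal $1$ and the singularity cancels. Passing to polar coordinates $y=x+s\omega$ with $\omega\in S^{N-1}$, $s>0$, and using that $B_r$ is star-shaped with respect to every one of its points, the inner radial integral in direction $\omega$ becomes
\[
\int_0^\infty \frac{1_{(0,1)}(s)-1_{(0,b(\omega))}(s)}{s}\,ds \;=\; \log \frac{1}{b(\omega)},
\]
where $b(\omega)=\inf\{s>0 : x+s\omega\notin B_r\}$ is the chord length from $x$ to $\partial B_r$ in direction $\omega$ (the computation of the radial integral works the same whether $b(\omega)<1$ or $b(\omega)>1$). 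Therefore
\[
h_{B_r}(x)=-c_N\int_{S^{N-1}} \log b(\omega)\,d\omega.
\]

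Next I would compute this spherical integral exactly. After fixing $x=te_1$ with $t=|x|\in[0,r)$, solving $|x+b\omega|^2=r^2$ gives
\[
b(\omega)=-t\omega_1+\sqrt{t^2\omega_1^2+r^2-t^2}.
\]
The key observation is that $\omega\mapsto-\omega$ preserves the spherical measure while sending $\omega_1\mapsto-\omega_1$, so
\[
\int_{S^{N-1}}\log b(\omega)\,d\omega=\int_{S^{N-1}}\log\bigl(t\omega_1+\sqrt{t^2\omega_1^2+r^2-t^2}\bigr)\,d\omega.
\]
Adding the two expressions and using the identity $(A-B)(A+B)=r^2-t^2$ where $A=\sqrt{t^2\omega_1^2+r^2-t^2}$ and $B=t\omega_1$, one obtains
\[
2\int_{S^{N-1}}\log b(\omega)\,d\omega =|S^{N-1}|\log(r^2-t^2).
\]
Combined with $c_N|S^{N-1}|=2$, this yields the explicit formula
\[
h_{B_r}(x)=\log\frac{1}{r^2-|x|^2}\qquad \text{for all $x\in B_r$.}
\]

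From this formula, $h_{B_r}$ is strictly minimized at $x=0$ with $h_{B_r}(0)=2\log\frac{1}{r}$, and all other values are larger; in particular $h_{B_r}(x)\ge 2\log(1/r)$ with equality at $0$. Consequently $h_\Omega+\rho_N\ge 0$ on $\Omega=B_r$ is equivalent to $2\log(1/r)+\rho_N\ge 0$, i.e.\ $\log r\le \log 2+\tfrac12\bigl(\psi(\tfrac{N}{2})-\gamma\bigr)$, which is precisely $r\le r_N=2e^{\tfrac12(\psi(N/2)-\gamma)}$.

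The only delicate step is the symmetry/algebraic collapse that yields the closed-form $\log\tfrac{1}{r^2-|x|^2}$; everything else (the polar change of variables, evaluating the elementary radial integral, and reading off the threshold on $r$) is routine. If one wanted to avoid the explicit formula, one could instead observe that after the polar reduction $h_{B_r}(x)=-c_N\int_{S^{N-1}}\log b(\omega)\,d\omega$, Jensen's inequality applied to $\log$ and the identity $\int b(\omega)b(-\omega)\,d\omega$ analysis again forces $x=0$ to be the minimizer; but the symmetry computation above gives the sharp value directly and is cleaner.
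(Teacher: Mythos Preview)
Your proof is correct and actually sharper than the paper's: by passing to polar coordinates about $x$ and exploiting the symmetry $\omega\mapsto-\omega$ together with the identity $b(\omega)b(-\omega)=r^2-|x|^2$, you obtain the closed-form expression $h_{B_r}(x)=\log\frac{1}{r^2-|x|^2}$, from which the minimum at $x=0$ and the value $2\log(1/r)$ are immediate.

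The paper takes a different route. It bounds each of the two integrals in the definition of $h_\Omega$ separately by a bathtub-type rearrangement: replacing $\Omega\setminus B_1(x)$ by an annulus $B_{r_1(x)}\setminus B_1$ of the same measure (which maximizes the integral of $|z|^{-N}$), and similarly for $B_1(x)\setminus\Omega$. This yields a lower bound $\tilde h_\Omega(x)$ depending only on the measures $|B_1(x)\setminus\Omega|$ and $|\Omega\setminus B_1(x)|$; for $\Omega=B_r$ one then argues that $\tilde h_\Omega$ is minimized at $x=0$, where all inequalities become equalities and the value $2\log(1/r)$ is computed directly. Your approach is more elementary and yields strictly more information (the exact formula for $h_{B_r}$ at every point), while the paper's rearrangement bounds are in principle applicable to domains other than balls, though in the end they too are only used for $\Omega=B_r$.
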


\begin{proof}
We first show that $h_\Omega(x) \ge h_\Omega(0)$ for $x \in \Omega$. Indeed, let $x \in \Omega$ and $r_1(x) = \Bigl(\frac{|\Omega\setminus B_1(x)|}{|B_1|}+1\Bigr)^{\frac{1}{N}}$, so that $|\Omega\setminus B_1(x)|= |B_{r_1(x)} \setminus B_1|$. It is then easy to see that
\begin{align*}
 \int_{ \Omega\setminus B_1(x)}\frac1{|x-y|^N} dy &=
 \int_{ [\Omega-x] \setminus B_1}\frac1{|z|^N} dz \le
 \int_{B_{r_1(x)} \setminus B_1}\frac1{|z|^N} dz= |\mathcal{S}^{N-1}| \log r_1(x)\\
&= |B_1| \log \Bigl(\frac{|\Omega \setminus B_1(x)|}{|B_1|}+1\Bigr)= |B_1| \log \frac{|B_1| + |\Omega \setminus B_1(x)|}{|B_1|}.
\end{align*}
Moreover, we choose $r_2(x) := \Bigl(1- \frac{|B_1(x) \setminus \Omega|}{|B_1|}\Bigr)^{\frac{1}{N}}$, so that $|B_1(x) \setminus \Omega| = |B_1 \setminus  B_{r_2(x)}|$. We then have, for $x \in \Omega$,
\begin{align*}
\int_{B_1(x) \setminus \Omega}\frac1{|x-y|^N} dy  &=
 \int_{B_1 \setminus  [\Omega-x]}\frac1{|z|^N} dz \ge
 \int_{B_1 \setminus  B_{r_2(x)}}\frac1{|z|^N} dz = - |\mathcal{S}^{N-1}| \log r_2 \\
&= - |B_1|\log \Bigl(1- \frac{|B_1(x) \setminus \Omega|}{|B_1|}\Bigr)= |B_1| \log \frac{|B_1|}{|B_1|- |B_1(x) \setminus \Omega|}.
\end{align*}
Consequently,
$$
h_{\Omega}(x)  \ge c_N |B_1| \Bigl( \log \frac{|B_1|}{|B_1|- |B_1(x) \setminus \Omega|}  - \log \frac{|B_1| + |\Omega \setminus B_1(x)|}{|B_1|}\Bigr)=: \tilde h_{\Omega}(x).
$$
Now the quantity $\tilde h_\Omega$ is minimized at points $x \in \Omega$ where $|B_1(x) \setminus \Omega|$ is minimal.
Thus, in the case where $\Omega =B_r$ for some $r>0$, then $\tilde h_\Omega$ is minimized at $x=0$, and we have
$\tilde h_\Omega(0)= h_\Omega(0)$ since all inequalities in the above estimate are equalities in the case $x=0$.
Moreover, we compute that
$$
h_\Omega(0)= - c_N |\mathcal{S}^{N-1}| \log r = c_N |\mathcal{S}^{N-1}|\log \frac{1}{r} = 2 \log \frac{1}{r}
$$
in this case.
\end{proof}

\begin{corollary}
\label{sec:maxim-princ-bound}
Suppose that $\Omega \subset \R^N$ is a bounded Lipschitz domain with
\begin{equation}
  \label{eq:cond-max-principle}
\frac{|\Omega|}{|B_1|} \le 2^N e^{\frac{N}{2}(\psi(\frac{N}{2})-\gamma)}.
\end{equation}
Then $\loglap$ satisfies the maximum principle on $\Omega$.
\end{corollary}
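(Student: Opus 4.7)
The plan is to reduce the general case to the ball via Faber–Krahn and then invoke the characterization of the maximum principle in terms of $\lambda_1^L$. First I would rewrite the hypothesis in a more convenient form: the constant $r_N := 2 e^{\frac{1}{2}(\psi(\frac{N}{2})-\gamma)}$ from Lemma~\ref{sec:maxim-princ-h-lower-bounds} satisfies $|B_{r_N}| = r_N^N |B_1| = 2^N e^{\frac{N}{2}(\psi(\frac{N}{2})-\gamma)}|B_1|$, so condition (\ref{eq:cond-max-principle}) is exactly $|\Omega| \le |B_{r_N}|$. Hence the ball $B = B_r$ with $|B| = |\Omega|$ has radius $r \le r_N$.

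Next, I would apply the Faber–Krahn inequality (Corollary~\ref{sec:faber-Krahn}) to get $\lambda_1^L(B_r) \le \lambda_1^L(\Omega)$. So it suffices to show $\lambda_1^L(B_r)>0$, after which Theorem~\ref{sec:maxim-princ-bound-characterization-section} delivers the maximum principle for $\Omega$.

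To prove $\lambda_1^L(B_r)>0$ for $r \le r_N$, I would combine Lemma~\ref{sec:maxim-princ-h-lower-bounds} with Corollary~\ref{sec:corollary-max-principle-h-function}: by the lemma, $r \le r_N$ gives $h_{B_r}(x) \ge 2\log \frac{1}{r} \ge -(\psi(\frac{N}{2})-\gamma) = -\rho_N + 2\log 2$, and in fact $h_{B_r}+\rho_N \ge 0$ on $B_r$; Corollary~\ref{sec:corollary-max-principle-h-function} then yields $\lambda_1^L(B_r) > 0$. Chaining the inequalities gives $\lambda_1^L(\Omega) \ge \lambda_1^L(B_r) > 0$, and Theorem~\ref{sec:maxim-princ-bound-characterization-section} concludes.

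No step looks like a serious obstacle: the whole argument is essentially the composition Faber–Krahn $\Rightarrow$ reduction to the ball, explicit computation of $h_{B_r}$ at the center $\Rightarrow$ sign of $h+\rho_N$, and the spectral characterization of the maximum principle. The only care needed is to verify that the numerical constant in the hypothesis matches $|B_{r_N}|/|B_1|$ exactly, which is a one-line algebraic check using the definition of $r_N$.
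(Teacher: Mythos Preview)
Your proposal is correct and follows exactly the paper's route: rewrite the hypothesis as $r \le r_N$ for the ball $B_r$ with $|B_r|=|\Omega|$, use Lemma~\ref{sec:maxim-princ-h-lower-bounds} together with Corollary~\ref{sec:corollary-max-principle-h-function} to get $\lambda_1^L(B_r)>0$, then apply Faber--Krahn (Corollary~\ref{sec:faber-Krahn}) and the spectral characterization (Theorem~\ref{sec:maxim-princ-bound-characterization-section}). One small slip: the displayed chain $2\log\frac{1}{r} \ge -(\psi(\frac{N}{2})-\gamma) = -\rho_N + 2\log 2$ is off by $2\log 2$ (the correct bound is $2\log\frac{1}{r} \ge -\rho_N$), but your stated conclusion $h_{B_r}+\rho_N \ge 0$ is exactly what the lemma gives, so the argument is unaffected.
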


\begin{proof}
Let $r:= \Bigl(\frac{|\Omega|}{|B_1|}\Bigr)^{\frac{1}{N}}$, so that $|\Omega| = |B_r|$. By assumption (\ref{eq:cond-max-principle}) we have $r \le r_N$, which implies that $\lambda_1^L(B_r) > 0$ by Theorem~\ref{sec:maxim-princ-bound-characterization-section}, Corollary~\ref{sec:corollary-max-principle-h-function} and Lemma~\ref{sec:maxim-princ-h-lower-bounds}.

By Corollary~\ref{sec:faber-Krahn}, it then follows that $\lambda_1^L(\Omega)>0$, and thus $\loglap$ satisfies the maximum principle on $\Omega$ by Theorem~\ref{sec:maxim-princ-bound-characterization-section}.
\end{proof}

Now Corollary~\ref{sec:main-result-max-principle} readily follows from Corollaries~\ref{sec:corollary-max-principle-h-function} and~\ref{sec:maxim-princ-bound}.

\section{Regularity and boundary decay}
\label{sec:regul-bound-decay}

This section is devoted to the proof of Theorem~\ref{pr 2.1-main}. In the following, we fix a function $\ell: (0,\infty) \to (0,\infty)$ with
\begin{equation}
  \label{eq:properties-ell}
\ell(t) \to 0 \quad \text{and}\quad \frac{\log \ell(t) }{\Bigl(-\log t\Bigr)^\sigma} \to 0 \qquad \text{as $t \to 0\quad$ for any $\sigma>0$.}
\end{equation}
A possible choice for $\ell$ is a positive extension of the function $t \mapsto -\frac{1}{\log t}$, $t \in (0,\frac{1}{2})$. We also define the constant
\begin{equation}
  \label{eq:def-kappa-N}
\kappa_N:= \int_{\R^{N-1}} \frac{1}{\bigl(1+ |z'|^2\bigr)^{\frac{N}{2}}}\,d z' \quad \text{for $N \ge 2$,}\qquad \kappa_1:= 1.
\end{equation}

We need the following integral estimates.

\begin{lemma}
\label{estimate-h-t}
Let the function $h: [0,\infty) \setminus \{1\} \to (0,\infty)$ be defined by

$$
h(t):= \int_{S^{N-1}}|t e_N-y|^{-N}d\sigma(y).
$$
Then $h(t)=  \frac{h_0(t)}{|t-1|}$ for $t \in [0,\infty) \setminus \{1\}$ with a bounded continuous function $h_0: [0,\infty) \to \R$ satisfying $h_0(1)= \kappa_N$.
\end{lemma}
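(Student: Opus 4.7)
The plan is to exploit the axial symmetry of $h$ around the $e_N$-direction and then to rescale near $\theta=0$ to isolate the $|t-1|^{-1}$ singularity. By rotational symmetry of $S^{N-1}$ about the $e_N$-axis, parametrising $y$ by its angle $\theta \in [0,\pi]$ with $e_N$ yields, for $N\ge 2$,
$$
h(t) = |S^{N-2}|\int_0^\pi \frac{\sin^{N-2}\theta}{\bigl(t^2 - 2t\cos\theta + 1\bigr)^{N/2}}\,d\theta.
$$
(For $N=1$ the sphere is $\{\pm 1\}$ and $h(t)=|t-1|^{-1}+|t+1|^{-1}$, so $h_0(t)=1+|t-1|/|t+1|$, bounded continuous with $h_0(1)=1=\kappa_1$.)

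For $N\ge 2$ I would substitute $u=\sin(\theta/2)$, giving $t^2-2t\cos\theta+1=(t-1)^2+4tu^2$, $\sin\theta = 2u\sqrt{1-u^2}$, $d\theta = 2\,du/\sqrt{1-u^2}$, hence
$$
h(t) = 2^{N-1}|S^{N-2}|\int_0^1 \frac{u^{N-2}(1-u^2)^{(N-3)/2}}{\bigl((t-1)^2 + 4tu^2\bigr)^{N/2}}\,du.
$$
Split this into $\int_0^{1/2}+\int_{1/2}^1$. On $[1/2,1]$ the denominator satisfies $(t-1)^2+4tu^2\ge t$, so that piece of $|t-1|h(t)$ is $O(|t-1|)$ as $t\to 1$ and vanishes in the limit. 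On $[0,1/2]$ I would rescale by $u=|t-1|v/(2\sqrt t)$, which turns $(t-1)^2+4tu^2$ into $(t-1)^2(1+v^2)$ and gives
$$
|t-1|\int_0^{1/2} \frac{u^{N-2}(1-u^2)^{(N-3)/2}}{\bigl((t-1)^2 + 4tu^2\bigr)^{N/2}}\,du = \frac{1}{(2\sqrt t)^{N-1}}\int_0^{\sqrt t/|t-1|}\!\frac{v^{N-2}\bigl(1-\tfrac{(t-1)^2v^2}{4t}\bigr)^{(N-3)/2}}{(1+v^2)^{N/2}}\,dv.
$$
The factor $\bigl(1-\tfrac{(t-1)^2v^2}{4t}\bigr)^{(N-3)/2}$ is bounded by a constant on the integration range (the cutoff at $u=1/2$ keeps it uniformly away from $1$), and the integrand is pointwise dominated by $v^{N-2}/(1+v^2)^{N/2}$, which is integrable on $[0,\infty)$. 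Dominated convergence then yields
$$
\lim_{t\to 1}|t-1|h(t)=|S^{N-2}|\int_0^\infty\frac{v^{N-2}}{(1+v^2)^{N/2}}\,dv=\kappa_N,
$$
where the last identity comes from expressing the defining integral for $\kappa_N$ in polar coordinates on $\R^{N-1}$.

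For boundedness and continuity of $h_0(t):=|t-1|h(t)$ on $[0,\infty)\setminus\{1\}$, note that $h$ is smooth there by dominated convergence applied to the spherical integral, so $h_0$ is continuous. For large $t$, the elementary bound $|te_N-y|\ge t-1$ gives $h(t)\le|S^{N-1}|(t-1)^{-N}$, hence $h_0(t)\le|S^{N-1}|(t-1)^{1-N}$, which is bounded for $t\ge 2$. Together with the limit computation, $h_0$ extends continuously to $[0,\infty)$ with $h_0(1)=\kappa_N$, and is therefore bounded on every half-line of the form $[0,M]$ by compactness, and on $[M,\infty)$ by the large-$t$ bound; boundedness on all of $[0,\infty)$ follows. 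The main obstacle I expect is the case $N=2$, where $(1-u^2)^{(N-3)/2}=(1-u^2)^{-1/2}$ blows up as $u\to 1$; splitting at $u=1/2$ as above sidesteps this by confining the rescaling argument to the region where the singular factor is harmless.
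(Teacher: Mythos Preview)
Your argument is correct. The substitution $u=\sin(\theta/2)$ neatly isolates the factor $(t-1)^2+4tu^2$, and the subsequent rescaling $u=|t-1|v/(2\sqrt t)$ does exactly what is needed to extract the singular factor and reduce to the integral defining $\kappa_N$; the domination and the handling of the $N=2$ endpoint are both fine.

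The paper proceeds differently: it gives a geometric interpretation of $h_0(t)=|t-1|h(t)$ by rescaling and translating the sphere. Writing $S_\tau=\tau(S^{N-1}-e_N)$ for the sphere of radius $\tau$ tangent to $\{x_N=0\}$ at the origin, one checks directly that $h_0(t)=\int_{S_{\tau_t}}|\pm e_N-y|^{-N}d\sigma(y)$ with $\tau_t=1/|t-1|$. The limit $t\to 1$ then amounts to letting the radius $\tau\to\infty$, so that $S_\tau$ flattens to the hyperplane $\{x_N=0\}\simeq\R^{N-1}$; a local graph parametrisation over a growing disc in $\R^{N-1}$, together with dominated convergence, yields $\kappa_N$. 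Your approach is more direct and purely computational, avoiding the graph coordinates; the paper's approach makes the appearance of $\kappa_N=\int_{\R^{N-1}}(1+|z'|^2)^{-N/2}dz'$ transparent as the integral of $|e_N-y|^{-N}$ over the limiting hyperplane. Both arguments ultimately rest on the same dominated-convergence step after a suitable blow-up.
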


\begin{proof}
For $\tau>0$, we consider the rescaled and translated sphere $S_\tau:=\tau(S^{N-1}-e_N)$ with radius $\tau$ which touches the origin and is contained in the half space $\{x_N \le 0\}$. For $t \in [0,\infty) \setminus \{1\}$ we then have, with $\tau_t:=\frac{1}{|t-1|}$,
$$
|t-1| h(t)= |t-1|^{N} \int_{S_{\tau_t}}\bigl|t e_N-\bigl(|t-1| y +e_N\bigr)|^{-N}d\sigma(y)= \int_{S_{\tau_t}}\Bigl|\frac{t-1}{|t-1|} e_N- y\Bigr|^{-N}d\sigma(y) =: h_0(t).
$$
Observe that this defines a continuous function $h_0: [0,\infty) \setminus \{1\} \to \R$ which is bounded on $[0,\infty) \setminus (1-\eps,1+\eps)$ for any $\eps \in (0,1)$. It thus remains to show that $h_0(t) \to \kappa_N$ as $t \to 1$, i.e., that
\begin{equation}
  \label{eq:remains-to-show-sphere-integral}
\int_{S_{\tau}}\bigl|\pm e_N- y\bigr|^{-N}d\sigma(y) \to \kappa_N \qquad \text{as $\tau \to \infty$.}
\end{equation}
For this we choose $\frac{N-1}{N} < \alpha <1$, and we let $\tau>1$. Moreover, we consider the local graph parametrization
$$
\psi_\tau:  B_{\tau^\alpha} \to S_{\tau},\qquad \psi_\tau(z')= (z',\phi_\tau(z'))\quad \text{with $\phi_\tau(z')=\sqrt{\tau^2-|z'|^2}-\tau$},
$$
where $B_{\tau^\alpha}:= B_{\tau^\alpha}(0) \subset \R^{N-1}$, and we let $A_\tau:= S_\tau \setminus \psi_\tau(B_{\tau^\alpha})$. Then we have $|y \pm e_N| \ge \tau^\alpha$ for every $y \in A_\tau$ and therefore
\begin{equation}
  \label{eq:sphere-integral-first}
\int_{A_\tau}|e_N\pm y |^{-N}d\sigma(y) \le |S_\tau|\tau^{-\alpha N} = O(\tau^{N-1-\alpha N}) \to 0 \qquad \text{as $\tau \to \infty.$}
\end{equation}
Moreover,
$$
\int_{\psi_\tau(B_{\tau^\alpha})}|e_N\pm y |^{-N}d\sigma(y) = \int_{B_{\tau^\alpha}}h_\tau(z')dz'
\quad \text{with}\quad h_\tau^\pm(z'):= \frac{\tau }{\sqrt{\tau^2-|z'|^2}}\bigl(|z'|^2 + [\phi_\tau(z')\pm 1]^2 \bigr)^{-\frac{N}{2}}.
$$
Note that $0 \le \tau-\sqrt{\tau^2-|z'|^2}\le 1- \sqrt{1-|z'|^2} \le 1$ for $|z'|\le 1$, $\tau\ge 1$, which implies that
$$
[\phi_\tau(z')\pm 1]^2 = \Bigl(\pm 1- \bigl(\tau-\sqrt{\tau^2-|z'|^2}\,\bigr)\Bigr)^2 \ge 1-|z'|^2 \qquad \text{for $|z'|\le 1$, $\tau\ge 1$.}
$$
Furthermore, $\frac{\tau }{\sqrt{\tau^2-|z'|^2}} \le \frac{\tau}{\sqrt{\tau^2-\tau^{2\alpha}}} \le 2$ for $z' \in B_{\tau^\alpha}$ if $\tau \ge \bigl( \frac{4}{3}\bigr)^{\frac{1}{2-2\alpha}}$. Consequently,
\begin{equation}
|h_\tau(z')|\le  2\Bigl(|z'|^2 + [\phi_\tau(z')\pm 1]^2 \Bigr)^{-\frac{N}{2}} \le 2\Bigl( 1_{\{|z'| \ge 1\}} |z'|^{-N} + 1_{\{|z'| \le 1\}}\Bigr) \label{dom-conv-1}
\end{equation}
for $\tau \ge \bigl( \frac{4}{3}\bigr)^{\frac{1}{2-2\alpha}}$, $|z'| \le B_{\tau^\alpha}$, where the RHS of (\ref{dom-conv-1}) defines an integrable function on $\R^{N-1}$.
Furthermore, we see that $h_\tau(z') \to \bigl(1+ |z'|^2\bigr)^{-\frac{N}{2}}$ as $\tau \to \infty$ for every $z' \in \R^{N-1}$. Combining this pointwise convergence property with (\ref{dom-conv-1}) and Lebesgue's theorem, we find that
\begin{equation}
  \label{eq:sphere-integral-second}
\int_{\psi_\tau(B_{\tau^\alpha})}|e_N \pm y |^{-N}d\sigma(y) \to \int_{\R^{N-1}} \bigl(1+ |z'|^2\bigr)^{-\frac{N}{2}}\,d z'  = \kappa_N \qquad \text{as $\tau \to \infty$.}
\end{equation}
Now (\ref{eq:sphere-integral-first}) and (\ref{eq:sphere-integral-second}) yield (\ref{eq:remains-to-show-sphere-integral}), as required.
\end{proof}

\begin{lemma}
\label{estimate-kappa-Omega-ball}
Let $R>0$  and   $\rho(x)= {\rm dist}(x,\partial B_R)= |x|-R$ for $x \in \R^N \setminus B_R$. Then we have that
\begin{equation}
    \label{eq:kappa-omega-C-2-est}
\int_{B_R}|x-y|^{-N}\,dy \ge  -(\kappa_N + o(1)) \log \rho(x)  \qquad \text{for $x \in \R^N \setminus B_R$ as $\rho(x) \to 0$.}
  \end{equation}
\end{lemma}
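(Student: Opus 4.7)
By the rotational symmetry of $B_R$, we may assume without loss of generality that $x = (R+\rho)e_N$ where $\rho = \rho(x)$. Rescaling the integration variable by $y = |x| z$ reduces the problem to controlling
$$
I(\tau) := \int_{B_\tau}|e_N - z|^{-N}\,dz \qquad \text{with}\qquad \tau = \frac{R}{|x|} = \frac{R}{R+\rho} \in (0,1),
$$
since the kernel $|\cdot|^{-N}$ is scaling invariant in dimension $N$. Note that $1-\tau = \rho/(R+\rho)$, so $-\log(1-\tau) = -\log \rho + \log(R+\rho) = -\log \rho + O(1)$ as $\rho \to 0^+$, and in particular $-\log(1-\tau) = (1+o(1))(-\log \rho)$.

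Next I would pass to spherical coordinates $z = r\omega$, $r \in (0,\tau)$, $\omega \in S^{N-1}$. Writing $|e_N - r\omega| = r|\tfrac{1}{r}e_N - \omega|$ and invoking Lemma~\ref{estimate-h-t}, I get
$$
I(\tau) = \int_0^\tau r^{N-1}\cdot r^{-N}\int_{S^{N-1}}\bigl|\tfrac{1}{r}e_N - \omega\bigr|^{-N}\,d\sigma(\omega)\,dr = \int_0^\tau \frac{h(1/r)}{r}\,dr.
$$
Substituting the factorization $h(1/r) = h_0(1/r)/|1/r-1| = r\,h_0(1/r)/(1-r)$ (valid for $r \in (0,1)$) from Lemma~\ref{estimate-h-t} collapses this to the one-dimensional integral
$$
I(\tau) = \int_0^\tau \frac{h_0(1/r)}{1-r}\,dr.
$$

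The logarithmic divergence can now be extracted by a standard splitting near the singularity $r=1$: for any $\delta>0$, choose $r_0 \in (0,1)$ with $|h_0(1/r)-\kappa_N| < \delta$ for $r \in (r_0,1)$, which is possible since $h_0$ is continuous at $1$ with $h_0(1) = \kappa_N$. Splitting $\int_0^\tau = \int_0^{r_0} + \int_{r_0}^\tau$ bounds the first piece uniformly in $\tau$, while the second equals $-(\kappa_N + O(\delta))\log(1-\tau) + O(1)$. Letting $\delta \to 0$ yields $I(\tau) = -(\kappa_N + o(1))\log(1-\tau)$ as $\tau \to 1^-$.

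Combining this with the asymptotic $-\log(1-\tau) = -(1+o(1))\log \rho$ from the first paragraph produces
$$
\int_{B_R}|x-y|^{-N}\,dy = I(\tau) = -(\kappa_N + o(1))\log \rho(x)\qquad \text{as }\rho(x)\to 0,
$$
which is in fact the asymptotic equivalence and therefore implies the claimed lower bound. I expect no real obstacle: all the geometric difficulty has already been absorbed into Lemma~\ref{estimate-h-t}, and what remains is the routine Abelian-type argument near $r=1$.
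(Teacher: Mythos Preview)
Your argument is correct and follows essentially the same route as the paper: pass to polar coordinates, invoke Lemma~\ref{estimate-h-t} to reduce the spherical integral to $h_0$, and then split the radial integral near the singular endpoint to extract the $-\kappa_N \log \rho$ term. The only cosmetic difference is that the paper splits at a moving threshold $R-\ell(\rho(x))$ using the auxiliary function $\ell$ from~(\ref{eq:properties-ell}), whereas you use a fixed $r_0$ and a standard $\delta$-argument; your version is slightly cleaner and in fact yields the full asymptotic equivalence rather than just the lower bound.
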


\begin{proof}
For $x \in \R^N \setminus B_R$, we have that
\begin{align}
&\int_{B_R}|x-y|^{-N}\,dy = \int_{0}^{R} \tau^{N-1} \int_{S^{N-1}} |x-\tau y|^{-N}\,dy = \int_{0}^{R} \tau^{-1} \int_{S^{N-1}} |\frac{x}{\tau}- y|^{-N}\,dy \nonumber\\
&= \int_{0}^{R}  \tau^{-1} \frac{h_0(\bigl|\frac{x}{\tau}\bigr|)}{| |\frac{x}{\tau}|-1|}\,d\tau =\int_{0}^{R-\ell(\rho(x))} \frac{h_0(\bigl|\frac{x}{\tau}\bigr|)}{|x|-\tau}\,d\tau + \int_{R-\ell(\rho(x))}^{R} \frac{h_0(\bigl|\frac{x}{\tau}\bigr|)}{|x|-\tau}\,d\tau. \label{estimate-kappa-Omega-ball-proof-1}
\end{align}
Since $\ell(\rho(x)) \to 0$ as $|x| \to R$, we have
$$
\sup_{R-\ell(\rho(x)) \le \tau \le R}\:\: \Bigl|  h_0(\bigl|\frac{x}{\tau}\bigr|)-\kappa_N \Bigr|  \to 0 \qquad \text{as $|x| \to R$}
$$
by Lemma~\ref{estimate-h-t} and therefore, by \eqref{eq:properties-ell},
\begin{align}
\int_{R-\ell(\rho(x))}^{R} &\frac{h_0(\bigl|\frac{x}{\tau}\bigr|)}{|x|-\tau}\,d\tau= \bigl(\kappa_N + o(1)\bigr)\int_{R-\ell(\rho(x))}^{R}\frac{1}{|x|-\tau}d\tau = \bigl(\kappa_N + o(1)\bigr)\int_{\rho(x)}^{\rho(x)+\ell(\rho(x))}\frac{1}{\tau}d\tau \nonumber\\
&= \bigl(\kappa_N + o(1)\bigr) \bigl(\log \bigl(\rho(x)+\ell(\rho(x))\bigr)- \log(\rho(x))\bigr)\ge \bigl(\kappa_N + o(1)\bigr)
\bigl(\log \bigl(\ell(\rho(x))\bigr)- \log(\rho(x))\bigr) \nonumber\\
&=\bigl(\kappa_N + o(1)\bigr) \bigl(- \log(\rho(x))\bigr) \quad\ \text{as \,$\rho(x) \to 0$.} \label{estimate-kappa-Omega-ball-proof-2}
\end{align}
Moreover, since $h_0$ is a bounded function,
\begin{align}
&\Big|\int_0^{R-\ell(\rho(x))} \frac{h_0(\bigl|\frac{x}{\tau}\bigr|)}{|x|-\tau}\,d\tau\Bigr| \le \|h_0\|_{L^\infty}
\int_{\rho(x)+\ell(\rho(x))}^{|x|} \frac{1}{\tau}d\tau \nonumber \\
&= \|h_0\|_{L^\infty} \bigl(\log (|x|) - \log \bigl(\rho(x)+\ell(\rho(x))\bigr)\bigr)\le
\|h_0\|_{L^\infty} \bigl(\log (|x|) - \log \bigl(\ell(\rho(x))\bigr)\bigr) \nonumber \\
&= o(1)\bigl(- \log(\rho(x))\bigr) \quad\ \text{as \,$\rho(x)= |x|-R \to 0$.}
\label{estimate-kappa-Omega-ball-proof-3}
\end{align}
Combining (\ref{estimate-kappa-Omega-ball-proof-1}), (\ref{estimate-kappa-Omega-ball-proof-2}) and~(\ref{estimate-kappa-Omega-ball-proof-3}), we get (\ref{eq:kappa-omega-C-2-est}).
\end{proof}

Next, we construct a barrier function which will allow us to prove Theorem~\ref{pr 2.1-main} with the help of the weak maximum principle.

\begin{lemma}
\label{sec:regul-bound-decay-lemma-radial}
Let $\tau \in (0,\frac{1}{2})$ and $0<R<\frac{1}{2}$. Then there exists $\delta>0$ and a continuous function $V \in L^1_0(\R^N)$ with the following properties:
\begin{itemize}
\item[(i)] $V \equiv 0$ in $B_R$ and $V>0$ in $\R^N \setminus \overline{B_R}$;
\item[(ii)] $V$ is (locally) Dini continuous in $\R^N \setminus \overline{B_R}$;
\item[(iii)] $V(x)= \bigl(-\ln \rho(x)\bigr)^{-\tau}$ for $x \in A_{R,\delta}$;
\item[(iv)] $\loglap V(x) \ge \frac{(1-2\tau)\kappa_N c_N}{2(1-\tau)} \bigl(-\ln \rho(x)\bigr)^{1-\tau}$ for $x \in A_{R,\delta}$.
\end{itemize}
Here $c_N= \pi^{- \frac{N}{2}}  \Gamma(\frac{N}{2})$ as before, $\kappa_N$ is given in (\ref{eq:def-kappa-N}), $A_{R,\delta}:= \{x \in \R^N \:: R<|x| <R+\delta\}$ and
$\rho(x):= |x|-R= \dist(x,B_R)$ for $x \in \R^N$ with $|x|>R$.
\end{lemma}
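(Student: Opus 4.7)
The plan is to define $V$ as a radial profile vanishing on $\overline{B_R}$ and equal to $(-\ln \rho(x))^{-\tau}$ in a one-sided neighborhood of $\partial B_R$, and then analyze $\loglap V(x)$ via the integral representation (\ref{representation-main}). The main positive contribution will come from the region $B_R$ (where $V \equiv 0$) via Lemma~\ref{estimate-kappa-Omega-ball}, while the delicate step will be to show that the remaining terms do not overwhelm it.

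Concretely, fix $\delta_0 > 0$ with $\delta_0 < \min\{\frac{1}{2}, 1 - 2R\}$ and a profile $g \in C([0, \infty))$ satisfying $g(0) = 0$, $g(t) = (-\ln t)^{-\tau}$ for $t \in (0, \delta_0]$, $g$ smooth and strictly positive on $(0, \infty)$, and $g$ decaying fast enough so that $V(x) := g(\rho(x))\,1_{\R^N \setminus \overline{B_R}}(x)$ belongs to $L^1(\R^N) \cap L^\infty(\R^N) \subset L^1_0(\R^N)$. Properties (i), (iii) are immediate, and (ii) follows since $g$ is smooth on $(0, \infty)$, making $V$ locally smooth (and hence locally Dini continuous) on $\R^N \setminus \overline{B_R}$. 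For $\delta \le \delta_0$ and $x \in A_{R, \delta}$ we have $B_R \subset B_1(x)$, so applying (\ref{representation-main}) and using $V \equiv 0$ on $B_R$ gives
\begin{align*}
\loglap V(x) &= c_N V(x) \int_{B_R}\frac{dy}{|x-y|^N} + c_N \int_{B_1(x) \setminus B_R}\frac{V(x)-V(y)}{|x-y|^N}\,dy \\
&\quad - c_N \int_{\R^N \setminus B_1(x)}\frac{V(y)}{|x-y|^N}\,dy + \rho_N V(x).
\end{align*}
Lemma~\ref{estimate-kappa-Omega-ball} bounds the first term below by $c_N \kappa_N (1+o(1))(-\ln \rho(x))^{1-\tau}$ as $\rho(x) \to 0$; the third term is bounded by $c_N \|V\|_{L^1}$, and the fourth is $O((-\ln \rho(x))^{-\tau})$, so both are $o((-\ln \rho(x))^{1-\tau})$.

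The heart of the argument is a lower bound on the middle integral, which is negative since $g$ is monotone increasing. Working in coordinates centered at the nearest boundary point $x_0 \in \partial B_R$ (with $x_0 = 0$, $x = \rho(x)e_N$), one approximates $B_R$ by the tangent half-space $\{z_N < 0\}$ with curvature errors $O(|z'|^2/R)$ and $V(y)$ by $g(z_N)$ with analogous corrections $O(g'(z_N)|z'|^2/R)$. Using the cross-section identity $\int_{\R^{N-1}}(|z'|^2 + a^2)^{-N/2}\,dz' = \kappa_N/|a|$, the leading contribution reduces to the one-dimensional integral $\kappa_N \int_0^{\delta_0}\frac{g(\rho(x)) - g(s)}{|s-\rho(x)|}\,ds$. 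The substitution $s = \rho(x) e^w$ transforms this into a weighted integral in $w$; a careful asymptotic analysis as $u := -\ln \rho(x) \to \infty$, handling $w < 0$ and $w > 0$ separately and exploiting the convexity of $w \mapsto (u-w)^{-\tau}$, yields the lower bound $-\frac{\kappa_N}{2(1-\tau)}(-\ln \rho(x))^{1-\tau}(1+o(1))$. One must simultaneously show that the curvature corrections integrate to $o((-\ln \rho(x))^{1-\tau})$ despite the blow-up $g'(z_N) \sim \tau z_N^{-1}(-\ln z_N)^{-\tau-1}$ near $\partial B_R$; this follows from the explicit form of $g'$ combined with the $|z'|^2$ smallness factor. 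Combining everything and using $c_N \kappa_N = 1$, one obtains $\loglap V(x) \ge c_N \kappa_N \frac{1-2\tau}{2(1-\tau)}(1+o(1))(-\ln \rho(x))^{1-\tau}$, and (iv) follows upon shrinking $\delta$ once more. The main obstacle is this one-dimensional asymptotic analysis together with the control of the curvature-induced corrections; the hypothesis $\tau < \frac{1}{2}$ enters decisively to ensure the positivity of $\frac{1-2\tau}{2(1-\tau)}$.
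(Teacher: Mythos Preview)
Your outline is correct and reaches the stated bound, but the route differs from the paper's in one essential respect. The paper never flattens the boundary; instead it exploits the radial symmetry directly via Lemma~\ref{estimate-h-t}, which gives the \emph{exact} identity $t^{N-1}\int_{S^{N-1}}|x-t\theta|^{-N}\,d\theta = h_0(|x|/t)/|t-|x||$ with $h_0(1)=\kappa_N$. After discarding the favourable region $\{R<|y|<|x|\}$ (where $V(y)<V(x)$), this reduces $-I_1(x)$ to a one-dimensional radial integral in $t=|y|$ with \emph{no curvature error terms to control}; a three-zone splitting at $|x|+\rho(x)$ and $|x|+\ell(\rho(x))$ then yields the sharper bound $-I_1(x)\le\bigl(\frac{\tau\kappa_N}{1-\tau}+o(1)\bigr)(-\ln\rho(x))^{1-\tau}$. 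Your half-space flattening replaces this exact angular integration by the flat cross-section formula $\int_{\R^{N-1}}(|z'|^2+a^2)^{-N/2}dz'=\kappa_N/|a|$, which is only the leading term of $h_0$, and you must then argue separately that the $O(|z'|^2/R)$ curvature corrections (combined with the blow-up $g'(z_N)\sim \tau z_N^{-1}(-\ln z_N)^{-\tau-1}$) integrate to $o((-\ln\rho(x))^{1-\tau})$. This is doable---the key is that $\int_0^1 g'(z_N)\,dz_N$ is finite and the $|z'|^2$ weight against $|x-y|^{-N}$ has bounded cross-sectional integral---but it is genuinely extra work that the paper sidesteps. Your one-dimensional bound $-\frac{\kappa_N}{2(1-\tau)}$ is weaker than the paper's $-\frac{\tau\kappa_N}{1-\tau}$ (note $\frac{\tau}{1-\tau}<\frac{1}{2(1-\tau)}$ precisely when $\tau<\frac12$), so the paper actually obtains twice the constant stated in (iv); both suffice. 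One small point: your use of $c_N\kappa_N=1$ is correct and could be stated more explicitly, and the bookkeeping of the $c_N$ and $\kappa_N$ factors in your final combination should be tidied.
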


\begin{proof}
We consider the function $r \mapsto \bigl(-\log r\bigr)^{-\tau}$ which is positive, strictly increasing and concave in the interval $(0,\delta_\tau]$ with $\delta_\tau:= e^{-(\tau+1)}$. Indeed, this follows since
$$
\frac{d}{dr}\frac{1}{( -\log r)^{\tau}}  = \frac{\tau}{r (-\log r)^{\tau+1}}\qquad \text{and}\qquad  \frac{d^2}{dr^2} \frac{1}{
(-\log r)^{\tau}}  = -
\frac{\tau}{r^2 (-\log r)^{\tau+1} }\Bigl(1 +\frac{\tau+1}{\log r} \Bigr).
$$
Let $g:(0,\infty) \to \R$ be a positive extension of class $C^1$ of the function $(0,\delta_\tau] \to \R$, $r \mapsto \bigl(-\log r\bigr)^{-\tau}$ such that $g$ decays to zero exponentially as $r \to \infty$.
In particular, we have
\begin{equation}
  \label{eq:g-a-b-estimate}
g(b)-g(a) \le \min \{g(b-a),g'(a)(b-a)\} \qquad \text{for \,$0 < a \le b < \delta_\tau$.}
\end{equation}
We now let $R>0$ and consider the radial function
$$
V \in L^1_0(\R^N),\qquad V(x)=
\left \{
  \begin{aligned}
  &0&&\quad \text{for \, $|x| \le R$},\\
  &g(\rho(x))&&\quad \text{for \, $|x| > R$}.
  \end{aligned}
\right.
$$
Then properties (i), (ii) and (iii) are obviously satisfied with $\delta = \delta_\tau$. It remains to show that (iv) holds after making $\delta$ smaller if necessary. To see this, we consider $x \in A_{R,\delta_\tau}$ from now on, and we write
$$
\loglap V(x)= c_N \Bigl(I_1(x)+ I_2(x)+ I_3(x)\Bigr)
$$
with
$$
I_1(x)=  \int_{B_1(x) \setminus B_R} \frac{ V(x) -V(y)}{|x-y|^{N} } dy,\qquad I_2(x) = -  [\j * V](x) = -  \int_{\R^N \setminus B_1(x)} \frac{V(y)}{|x-y|^{N}}\,dy
$$
and
$$
I_3(x)= \Bigl[\rho_N + \int_{B_1(x) \cap B_R}\frac{1}{|x-y|^N}dy\Bigr]V(x),
$$
By Lemma~\ref{estimate-kappa-Omega-ball} and since $0<R<\frac{1}{2}$, we have
$$
\int_{B_1(x) \cap B_R}\frac{1}{|x-y|^N}dy = \int_{B_R}\frac{1}{|x-y|^N}dy = -(\kappa_N+o(1)) \log \rho(x) \quad \  \text{as \,$\rho(x) \to 0$,}
$$
which implies that
\begin{equation}
  \label{eq:I-3-condition}
I_3(x) =   (\kappa_N + o(1))(-\log \rho(x))^{1-\tau} \quad\  \text{as \,$\rho(x) \to 0$.}
\end{equation}
Moreover, by a standard estimate as in the proof of Lemma~\ref{continuity-convolution}, there exists $C>0$ such that
\begin{equation}
  \label{eq:I-2-condition}
|I_2(x)| \le C\|V\|_{L^1_0} \qquad \text{for any $x \in A_{R,\delta_\tau}$.}
\end{equation}

In view of (\ref{eq:I-3-condition}) and (\ref{eq:I-2-condition}), property (iv) follows for $\delta>0$ sufficiently small once we have shown that
\begin{equation}
\label{eq:sufficient-I-1-cond}
-I_1(x) \le \Bigl(\frac{ \tau \kappa_N}{1-\tau}  + o(1)\Bigr)(-\log \rho(x))^{1-\tau} \quad\ \text{as \,$\rho(x) \to 0$.}
\end{equation}
Using the definition of $V$ and the fact that $g$ is increasing on $(0,\delta_\tau)$, we find that
$$
-I_1(x) \le \int_{\text{\tiny $|x|\! \le \!|y|\! \le \! |x|+1$}}\frac{V(y)- V(x)}{|x-y|^{N} } dy =I_{1,1}(x)+I_{1,2}(x) + I_{1,3}(x)
$$
with
$$
I_{1,1}(x) = \int_{\text{\tiny $|x|\! \le\! |y|\! \le\! |x|+\rho(x)$}}\frac{V(y)- V(x)}{|x-y|^{N} } dy,
\quad I_{1,2}(x)= \int_{\text{\tiny $|x|\!+\!\rho(x)\!\le \!|y|\! \le \!|x|\!+ \!\ell(\rho(x))$}}\frac{V(y)- V(x)}{|x-y|^{N} } dy
$$
and
$$
I_{1,3}(x) = \int_{\text{\tiny $|x|\!+ \!\ell(\rho(x))\! \le \!|y| \!\le \!|x|+1$}}\frac{V(y)- V(x)}{|x-y|^{N} } dy,
$$
where, as before, the function $\ell$ is given with the properties (\ref{eq:properties-ell}). We compute that
\begin{align*}
I_{1,2}(x) &= \int_{\text{\tiny $|x|\!+\!\rho(x)\! \le \!|y|\! \le \!|x|\!+\!\ell(\rho(x))$}} \frac{g(|y|-R)-g(|x|-R)}{|x-y|^N}dy\\
&= \int_{|x|+\rho(x)}^{|x|+\ell(\rho(x))}\Bigl(g(t-R)- g(|x|-R)\Bigr)t^{N-1} \int_{S^{N-1}}|x-t \theta|^{-N}\,d\theta d t,
\end{align*}
where, by Lemma~\ref{estimate-h-t},
\begin{equation}
\label{eq:sphere-est}
t^{N-1} \int_{S^{N-1}}|x-t \theta|^{-N}\,d\theta =  \frac{1}{t}\int_{S^{N-1}}|\frac{x}{t}- \theta|^{-N}\,d\theta=\frac{h_0(|\frac{x}{t}|)}{t \bigl||\frac{x}{t}|-1\bigr|}=\frac{h_0(|\frac{x}{t}|)}{t-|x|}
\end{equation}
for $t>|x|$. Since $|x| \to R$ and $\ell(\rho(x)) \to 0$ as $\rho(x) \to 0$, we have
$$
\sup_{\text{\tiny $|x|\!+\!\rho(x)\! \le \!t\! \le \!|x|+\ell(\rho(x))$}}\Bigl| h_0(|\frac{x}{t}|)-\kappa_N\Bigr| \to 0
\quad\ \text{as \,$\rho(x) \to 0$},
$$
by Lemma~\ref{estimate-h-t} and therefore,
\begin{align*}
I_{1,2}(x)&=(\kappa_N + o(1)) \int_{|x|+\rho(x)}^{|x|+\ell(\rho(x))}\frac{g(t-R)-g(|x|-R)}{t-|x|}\,dt\\
&=(\kappa_N + o(1))  \Bigl(\int_{\rho(x)}^{\ell(\rho(x))}\frac{g(t+\rho(x))}{t}\,dt - g(\rho(x)) \Bigl( \ln \ell(\rho(x)) -\ln \rho(x)\Bigr)\Bigr)\\
&\le (\kappa_N + o(1))  \Bigl(\int_{\rho(x)}^{\ell(\rho(x))}\frac{g(2t)}{t}\,dt - (1+ o(1))\Bigl(-\ln \rho(x)\Bigr)^{1-\tau}\Bigr),
\end{align*}
where, by (\ref{eq:properties-ell}),
$$
\int_{\rho(x)}^{\ell(\rho(x))}\frac{g(2t)}{t}\,dt =\int_{\rho(x)}^{\ell(\rho(x))}\frac{1}{t\Bigl(-\ln t - \ln 2\Bigr)^\tau}\,dt\\
=-\frac{\bigl(-\ln t - \ln 2\bigr)^{1-\tau}}{1-\tau} \Big|_{\rho(x)}^{\ell(\rho(x))}=  \frac{1+ o(1)}{1-\tau}\Bigl(-\ln \rho(x)\Bigr)^{1-\tau}
$$
as $\rho(x) \to 0$. Consequently,
\begin{equation}
  \label{eq:suff-I-1-2}
I_{1,2}(x)\le \Bigl( \frac{\kappa_N \tau}{1-\tau}+ o(1)\Bigr)\Bigl(-\ln \rho(x)\Bigr)^{1-\tau}.
\end{equation}
To estimate $I_{1,1}(x)$, we use (\ref{eq:g-a-b-estimate}), (\ref{eq:sphere-est}) and Lemma~\ref{estimate-h-t} to find that
\begin{align}
I_{1,1}(x) &= \int_{\text{\tiny $|x| \!\le\! |y| \!\le \!|x|\!+\!\rho(x)$}}\frac{g(|y|-R)-g(|x|-R)}{|x-y|^{N} } dy \le
g'(|x|-R) \int_{\text{\tiny $|x|\! \le \!|y|\! \le \!|x|\!+\!\rho(x)$}}\frac{|y|-|x|}{|x-y|^{N} } dy \nonumber\\
&= g'(\rho(x)) \int_{|x|}^{|x|+\rho(x)}(t-|x|) t^{N-1} \int_{S^{N-1}}|x-t \theta|^{-N}\,d\theta d t=  g'(\rho(x)) \int_{|x|}^{|x|+\rho(x)}  h_0(|\frac{x}{t}|)d t \nonumber \\
&= (\kappa_N + o(1)) g'(\rho(x))\rho(x) = \frac{\kappa_N \tau + o(1)}{\bigl(-\log \rho(x)\bigr)^{\tau+1}} = o(1)\Bigl(-\ln \rho(x)\Bigr)^{1-\tau} \quad\ \text{as \,$\rho(x) \to 0$.}\label{suff-I-1-1-est}
\end{align}
Finally, we estimate $I_{1,3}(x)$ with the help of (\ref{eq:properties-ell}), (\ref{eq:sphere-est}) and Lemma~\ref{estimate-h-t}, finding that
\begin{align}
I_{1,3}(x) &\le 2 \|g\|_{L^\infty} \int_{|x|+ \ell(\rho(x))}^{|x|+1}t^{N-1} \int_{S^{N-1}} |x-ty|^{-N} dy = \int_{|x|+ \ell(\rho(x))}^{|x|+1}\frac{h_0(|\frac{x}{t}|)}{t-|x|}dt \nonumber\\
&\le 2 \|g\|_{L^\infty} \|h_0\|_{L^\infty}\Bigl(-\log \ell(\rho(x))\Bigr) = o(1)\Bigl(-\ln \rho(x)\Bigr)^{1-\tau} \quad\ \text{as \,$\rho(x) \to 0$.}\label{suff-I-1-3-est}
\end{align}
Combining (\ref{eq:suff-I-1-2}), (\ref{suff-I-1-1-est}) and (\ref{suff-I-1-3-est}), we get (\ref{eq:sufficient-I-1-cond}), and thus the claim follows.
\end{proof}

We now complete the proof of Theorem~\ref{pr 2.1-main}, which we restate here for the reader's convenience.

\begin{theorem}\label{pr 2.1}
Let $\Omega \subset \R^N$ be Lipschitz domain which satisfies a uniform exterior sphere condition,
 $f \in L^\infty(\Omega)$ and $u \in \cH(\Omega) \cap L^\infty(\R^N)$ be a weak solution of the Poisson problem
\begin{equation}
  \label{eq:poisson-f}
\loglap u = f \qquad \text{in $\Omega$}, \qquad u \equiv 0 \quad \text{in $\R^N \setminus \Omega.$}
\end{equation}
Then $u \in C(\overline \Omega)$ and
\begin{equation}\label{boundary-decay}
|u(x)|= O\bigl(\log^{-\tau} \frac1{\rho(x)}\bigr) \qquad \text{for every $\tau \in (0,\frac{1}{2})$ as $\rho(x) \to 0$.}
\end{equation}
\end{theorem}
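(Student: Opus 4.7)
The proof decomposes into interior continuity of $u$ on $\Omega$ and the quantitative boundary decay \eqref{boundary-decay}. Interior continuity follows directly from the Kassmann--Mimica regularity estimates~\cite{KM} applied to the integrodifferential equation $\loglap u = f$ (whose kernel behaves like $|x-y|^{-N}$ near the diagonal) with bounded right-hand side. Combined with the boundary condition $u \equiv 0$ on $\R^N \setminus \Omega$, the decay \eqref{boundary-decay} then upgrades this to $u \in C(\overline\Omega)$, so all the work lies in proving \eqref{boundary-decay}.

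For \eqref{boundary-decay} I will run a barrier-and-comparison argument built on Lemma~\ref{sec:regul-bound-decay-lemma-radial}. Fix $\tau \in (0,1/2)$ and a uniform exterior-sphere radius $R \in (0,1/2)$ for $\Omega$. Given $x \in \Omega$ with $\rho(x)$ small, pick $x_0 \in \partial\Omega$ nearest to $x$ and let $z_0 := x_0 + R(x_0 - x)/|x_0 - x|$; the uniform exterior sphere condition (applied in the direction of the interior normal at $x_0$) guarantees $B_R(z_0) \subset \R^N \setminus \Omega$ with $x_0 \in \partial B_R(z_0)$, and by this specific construction $\rho^\ast(x) := |x - z_0| - R$ equals $\rho(x)$. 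Lemma~\ref{sec:regul-bound-decay-lemma-radial} translated to $z_0$ produces $\delta \in (0,1/2)$ (uniform in $x_0$, depending only on $R$ and $\tau$) and a barrier $V \in L^1_0(\R^N)$ with $V \equiv 0$ on $B_R(z_0)$, $V(y) = (-\log \rho^\ast(y))^{-\tau}$ on the annulus $A_{R,\delta}(z_0)$, and
\[
\loglap V(y) \ \ge\ c_1 \bigl(-\log \rho^\ast(y)\bigr)^{1-\tau} \qquad \text{on } A_{R,\delta}(z_0),
\]
with $c_1 > 0$ explicit. Shrink $\delta$ further so that $R+\delta \le r_N$ (the constant from Lemma~\ref{sec:maxim-princ-h-lower-bounds}), and set $\Omega_\delta := \Omega \cap A_{R,\delta}(z_0)$. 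By domain monotonicity of $\lambda_1^L$ and the fact that $\lambda_1^L(B_{R+\delta}(z_0)) > 0$, we get $\lambda_1^L(\Omega_\delta) > 0$, hence Theorem~\ref{sec:maxim-princ-bound-characterization} yields the weak maximum principle on $\Omega_\delta$. Finally, choose $C > 0$ large enough that $C c_1(-\log \delta)^{1-\tau} \ge \|f\|_\infty$ and $CV \ge \|u\|_\infty$ on $\overline{\Omega \setminus \Omega_\delta}$; the second condition is feasible since $V$ is continuous and strictly positive outside $\overline{B_R(z_0)}$, and $\Omega$ is bounded.

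Set $w_\pm := CV \pm u$. Then $w_\pm \ge 0$ on $\R^N \setminus \Omega_\delta$: on $B_R(z_0)$ one has $V = u = 0$; on $(\R^N \setminus \Omega) \setminus B_R(z_0)$ one has $u=0$ and $V \ge 0$; and on $\Omega \setminus \Omega_\delta$ the inequality follows from the choice of $C$. Moreover $\loglap w_\pm \ge 0$ holds weakly on $\Omega_\delta$: for each nonnegative $\varphi \in C^\infty_c(\Omega_\delta)$, a localized Fubini argument (using that $\supp \varphi$ sits at positive distance from $\partial B_R(z_0)$ where $V$ is smooth) gives $\cE_L(V,\varphi) = \int_{\Omega_\delta} [\loglap V]\varphi\, dx$, which combined with the weak equation $\cE_L(u, \varphi) = \int f \varphi\, dx$ and the choice of $C$ yields $\cE_L(w_\pm, \varphi) \ge 0$. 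The weak maximum principle on $\Omega_\delta$ then forces $w_\pm \ge 0$ on $\R^N$, i.e.\ $|u(x)| \le CV(x) = C(-\log \rho(x))^{-\tau}$ for $x$ with $\rho(x) < \delta$. Uniformity of $R, \delta, C$ in $x$ then establishes \eqref{boundary-decay}.

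The main technical obstacle will be the verification of the weak inequality $\loglap V \ge c_1(-\log \rho^\ast)^{1-\tau}$ on $\Omega_\delta$: $V$ is \emph{not} uniformly Dini continuous on $\overline{\Omega_\delta}$ because its modulus $(-\log r)^{-\tau}$ near $\partial B_R(z_0)$ fails $\int_0^1 \omega(r)/r\, dr < \infty$, so Remark~\ref{maximum-principle-remark} does not apply verbatim; the identity $\cE_L(V,\varphi) = \int [\loglap V]\varphi\,dx$ must be proved by hand via Fubini, exploiting that test functions supported in $\Omega_\delta$ stay at positive distance from $\partial B_R(z_0)$. A secondary subtlety is that $\Omega_\delta$ may fail to be Lipschitz because $\partial\Omega$ and $\partial B_R(z_0)$ are tangent at $x_0$; the weak maximum principle on $\Omega_\delta$ may thus require an approximation by Lipschitz subdomains, or an alternative direct argument testing with $u^-$ which uses only $u \in \cH(\Omega) \cap L^\infty$.
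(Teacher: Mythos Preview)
Your overall strategy matches the paper's: interior continuity from Kassmann--Mimica, then a barrier-and-comparison argument built on Lemma~\ref{sec:regul-bound-decay-lemma-radial} together with the maximum principle on a thin region near the boundary. The essential technical difference is that the paper does \emph{not} place the barrier with its singular sphere $\partial B_R$ tangent to $\partial\Omega$. Instead it takes $R = R_0/2$ (half the exterior sphere radius) and \emph{slides} the center inward by a parameter $t\in(0,\delta)$, setting $z(t,x_*)=x_*+(t+R)\nu$; the maximum principle is applied with the translated barrier $V_{t,x_*}$ on the set $\Omega_{t,x_*}:=\Omega\cap\bigl(z(t,x_*)+A_{R,\delta}\bigr)$, and only at the end is the limit $t\to 0^+$ taken pointwise.

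This sliding trick is precisely what resolves the two obstacles you flag. First, for $t>0$ the sphere $\partial B_R(z(t,x_*))$ sits strictly inside the exterior ball $B^{x_*}\subset\R^N\setminus\Omega$, so $V_{t,x_*}$ is uniformly Dini (in fact locally $C^1$) on $\overline\Omega$; Remark~\ref{maximum-principle-remark} then gives the weak inequality $\loglap V_{t,x_*}\ge 0$ without any ad hoc Fubini verification. Second, with $t>0$ the comparison domain $\Omega_{t,x_*}$ no longer has the tangential cusp at $x_*$ that your $\Omega_\delta$ has, removing the Lipschitz issue (up to a generic transversality adjustment of $\delta$). Your ``$t=0$'' approach can in principle be completed---one can check directly that $\mathfrak b(V,\Omega_\delta)<\infty$ using the concavity estimate $g(b)-g(a)\le\min\{g(b-a),g'(a)(b-a)\}$ from the proof of Lemma~\ref{sec:regul-bound-decay-lemma-radial}, and the cusp can be handled by approximation---but the sliding argument is both shorter and avoids these verifications entirely.

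A minor remark: your justification of the maximum principle on $\Omega_\delta$ via domain monotonicity of $\lambda_1^L$ and $\lambda_1^L(B_{R+\delta})>0$ is fine (and equivalent to the paper's small-measure criterion), provided $R+\delta\le r_N$, which one can always arrange by shrinking the exterior sphere radius.
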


\begin{proof}
Let
$$
f_1:= f + \j * u - \rho_N u \  \in L^\infty_{loc}(\R^N),
$$
where $\j$ is defined in (\ref{eq:def-j}). Then (\ref{eq:poisson-f}) is -- in weak sense -- equivalent to
$$
I u = f_1 \quad \text{in \,$\Omega$}, \qquad u \equiv 0 \quad \text{in \,$\R^N \setminus \Omega,$}
$$
where $I$ is the integral operator associated with the kernel $\k$ defined in (\ref{eq:def-k}), which is given by
$[I u](x) = \int_{\R^N}(u(x)-u(y))\k(x-y)\,dy$. Consequently, we have $u \in C_{loc}(\Omega)$ by the regularity result \cite[Theorem 3]{KM} of Kassmann and Mimica and a straightforward approximation argument. It thus remains to prove the estimate~(\ref{boundary-decay}) for fixed $\tau \in (0,\frac{1}{2})$. Since $\Omega$ satisfies a uniform exterior sphere condition, there exists a radius $0<R_0< \frac{1}{2}$ such that for every point $x_* \in \partial \Omega$ there exists a ball $B^{x_*}$ of radius $R_0$ contained in $\R^N \setminus \overline{\Omega}$ and
tangent to $\partial \Omega$ at $x_*$. Let $c(x_*)$ denote the center of $B^{x_*}$. We now apply Lemma~\ref{sec:regul-bound-decay-lemma-radial} with the value $R: =\frac{R_0}{2}$. This yields a function $V \in L^1_0(\R^N)$ and $\delta>0$ such that the properties (i)-(iv) of Lemma~\ref{sec:regul-bound-decay-lemma-radial} are satisfied. By Lemma~\ref{sec:regul-bound-decay-lemma-radial}(iv), we may assume that $\delta>0$ is chosen sufficiently small so that
$$
\loglap V(x) \ge \|f\|_{L^\infty} \quad \text{for $x \in A_{R,\delta}:= \{x \in \R^N \:: R<|x| <R+\delta\}$.}
$$
It now suffices to show that
\begin{equation}\label{boundary-estimate-omega-delta}
|u(x)|\le  c \bigl(- \log \rho(x)\bigr)^{-\tau} \qquad \text{for all $x \in \Omega_{\delta}$ with some constant $c>0$},
\end{equation}
where $\Omega_{\delta}=\{x\in\Omega:\, \rho(x)<\delta\}$.
For this we consider, for $t \in [0,1]$ and $x_* \in \partial \Omega$, the ball $B_R(z(t,x_*)) \subset B^{x_*}$ of radius $R$ centered at $z(t,x_*):= x_* + (t+R)\frac{c(x_*)-x_*}{|c(x_*)-x_*|}$. Moreover, we define the translated functions
$$
V_{t,x_*} \in L^1_0(\R^N), \qquad V_{t,x_*}(x)= V(x-z(t,x^*)) \qquad \text{for $x_* \in \partial \Omega$, $t \in [0,1]$.}
$$
For $t \in (0,\delta)$, the intersection $\Omega_{t,x_*}$ of $\Omega$ with the translated annulus $z(t,x_*)+A_{R,\delta}$ is nonempty. Moreover, making $\delta>0$ smaller if necessary, we may assume that the measure of $\Omega_{t,x_*}$ is small enough so that $\loglap$ satisfies the maximum principle on $\Omega_{t,x_*}$ (for all $x_* \in \partial \Omega$). Moreover, since $\Omega$ is bounded, there exists $R_1>R$ such that
$$
\Omega \subset \{x\in \R^N \:: R < |x- z(t,x_*)| < R_1\} \qquad \text{for all $x_* \in \partial \Omega$, $t \in (0,\delta)$},
$$
which implies that
$$
\Omega \setminus \Omega_{t,x_*} \subset \{x\in \R^N \:: R+\delta < |x- z(t,x_*)| < R_1\} \qquad \text{for all $x_* \in \partial \Omega$, $t \in (0,\delta)$.}
$$
Hence, since $V$ is positive on $\{x \in \R^N \:: R+\delta <|x| <R_1\}$ by Lemma~\ref{sec:regul-bound-decay-lemma-radial}(i), we may choose $c>1$ sufficiently large such that
$$
cV_{t,x_*} \ge \|u\|_{L^\infty} \qquad \text{in $\Omega \setminus \Omega_{t,x_*}$ for all $x_* \in \partial \Omega$, $t \in (0,\delta)$.}
$$
Consequently, we have, in weak sense,
$$
\loglap \bigl(c V_{t,x_*} \pm u\bigr) \ge 0 \quad \text{in $\Omega_{t,x_*}$,}\qquad c V_{t,x_*} \pm u \ge 0 \quad \text{on $\R^N \setminus \Omega_{t,x_*}$.}
$$
Here we note that $V_{t,x_*} \in \cV(\Omega)$ for $x_* \in \partial \Omega$, $t \in (0,\delta)$ since this function is uniformly Dini continuous on $\overline \Omega$ (for this we need $t>0$!). Applying the weak maximum principle to the function
$c V_{t,x*}\pm u$ on the set $\Omega_{t,x_*}$, we see that
$$
\pm u \le c V_{t,x_*} \qquad \text{on $\R^N$} \qquad \text{for every $x_* \in \partial \Omega$, $t \in (0,\delta)$.}$$
With $x_*$ being fixed, we may then pass to the limit $t \to 0$ and deduce that
$$
\pm u \le c V_{0,x_*} \qquad \text{on $\R^N$} \qquad \text{for every $x_* \in \partial \Omega$.}
$$
Next, we let $x \in \Omega$ with $\rho(x)< \delta$, and we let $x_* \in \partial \Omega$ with $\rho(x)= |x-x_*|$. It then also follows that
$$
0<\dist(x,B_R(z(0,x_*)))=\rho(x)<\delta
$$
and therefore, by Lemma~\ref{sec:regul-bound-decay-lemma-radial},
$$
x \in \Omega_{0,x*} \qquad \text{and}\qquad \pm u(x) \le c V_{0,x_*}(x)= c \bigl(-\ln \rho(x)\bigr)^{-\tau}.
$$
Hence~\eqref{boundary-estimate-omega-delta} is true, and the proof is finished.
\end{proof}

\begin{remark}
\label{remark-est-killing-measure}
{\rm Let $\Omega \subset \R^N$ be a domain, and let $\kappa_\Omega$ be the killing measure associated with the kernel $\k$ defined in (\ref{eq:def-kappa-omega}).
If $\Omega$ has a uniform $C^2$-boundary, then we may show by similar arguments as in the proofs of Lemma~\ref{estimate-kappa-Omega-ball}, Lemma~\ref{sec:regul-bound-decay-lemma-radial} and Theorem~\ref{pr 2.1} that 
  \begin{equation*}
\kappa_\Omega(x) = (c_N \kappa_N + o(1)) \log \frac{1}{\rho(x)}  \quad\ \text{as \,$\rho(x) \to 0$,}
  \end{equation*}
where $\kappa_N$ is given in (\ref{eq:def-kappa-N}).
}
\end{remark}

\section{Appendix: A logarithmic boundary Hardy inequality}
\label{sec:appendix}

The aim of this section is to prove the following logarithmic boundary Hardy inequality. It is not used in the present paper but might be of independent interest as it is related to the function spaces introduced in the previous sections, see Remark~\ref{a-posteriori-remark} above. Here, as before, we use the notation $\rho(x)= \dist(x,\partial \Omega)$ if the ambient domain $\Omega \subset \R^N$ is fixed.   

\begin{proposition}
\label{log-boundary-harnack-lip-bound}
Let $\Omega \subset \R^N$ be a bounded Lipschitz domain. Then there exists a constant $C=C(\Omega)>0$ with 
$$
\int_{\Omega} u^2(x) \log \frac{1}{\rho(x)} \,dx \le C\Bigl( {\mathfrak b}(u,\Omega) + \|u\|_{L^2(\Omega)}^2\Bigr)\qquad \text{for all $u \in C^\infty_c(\Omega)$,}  
$$
where, as before, 
$$
{\mathfrak b}(u,\Omega)= \frac{1} 2 \int_{\Omega} \int_{\Omega}(u(x)-u(y))^2\k(x-y) dx dy = \frac{1} 2 \int \!\!\!\!\int_{\stackrel{x,y \in \Omega}{\text{\tiny $|x-y|\!\le\! 1$}}} \frac{(u(x)-u(y))^2}{|x-y|^N} dx dy.
$$

\end{proposition}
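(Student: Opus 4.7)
The plan is to reduce the inequality to a half-space version by localization, and then establish the half-space case by 1D slicing. By the two-sided bounds \eqref{eq:lower-estimate-kappa-omega} and \eqref{eq:upper-estimate-kappa-omega}, the stated inequality is equivalent to $\int_\Omega u^2\kappa_\Omega\,dx \le C(\mathfrak{b}(u,\Omega)+\|u\|_{L^2}^2)$, which by identity \eqref{eq:alternative-representation-0} amounts to the norm comparison $\cE(u,u)\le C(\mathfrak{b}(u,\Omega)+\|u\|_{L^2}^2)$ for $u\in C_c^\infty(\Omega)$ extended by zero to $\R^N$. I would cover $\overline\Omega$ by finitely many balls: one interior ball $B_0\subset\subset\Omega$ (on which $\kappa_\Omega$ is bounded, contributing at most a multiple of $\|u\|_{L^2}^2$) and several boundary balls $B_j$ in which $\partial\Omega$ is a Lipschitz graph. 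A bi-Lipschitz straightening of $B_j\cap\Omega$ onto a half-ball of $\R^N_+$ preserves the kernel $|x-y|^{-N}\,dx\,dy$ up to multiplicative constants, while the cross terms from a subordinate partition of unity $\{\eta_j\}$ are bounded by $\|u\|_{L^2}^2$ via the Lipschitz regularity of $\sqrt{\eta_j}$ as in the proof of Theorem~\ref{density}. Thus it suffices to establish
\begin{equation*}
\int_{\R^N_+}u^2(x)\log(1/x_N)\,dx \le C\bigl(\mathfrak{b}(u,\R^N_+)+\|u\|_{L^2}^2\bigr),\qquad u\in C_c^\infty(\R^N_+).
\end{equation*}

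The first step for the half-space case is a 1D log-Hardy inequality: writing $\log(1/t)=\int_t^1 r^{-1}\,dr$ for $t\in(0,1)$ and applying Fubini recasts $\int_0^1 v^2(t)\log(1/t)\,dt$ as $\int_0^1 r^{-1}\int_0^r v^2(\sigma)\,d\sigma\,dr$; bounding $v^2(\sigma)\le 2(v(\sigma)-v(\tau))^2+2v^2(\tau)$ and averaging over $\tau\in(r,2r)$, followed by Fubini and a change of variables, yields $\int_0^1 v^2(t)\log(1/t)\,dt\le C\int\int_{0<\sigma,\tau,\,|\sigma-\tau|\le 1}(v(\sigma)-v(\tau))^2/|\sigma-\tau|\,d\sigma d\tau + C\|v\|_{L^2}^2$. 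Applied in the $x_N$-direction and integrated over $x'\in\R^{N-1}$ this gives $\int u^2\log(1/x_N)\,dx\le C\int_{\R^{N-1}}\mathfrak{b}_1(u(x',\cdot))\,dx' + C\|u\|^2$, where $\mathfrak{b}_1(v):=\int\int_{0<\sigma,\tau,\,|\sigma-\tau|\le 1}(v(\sigma)-v(\tau))^2/|\sigma-\tau|\,d\sigma d\tau$. To relate this slice-wise quantity $B:=\int_{\R^{N-1}}\mathfrak{b}_1(u(x',\cdot))\,dx'$ to the full form $\mathfrak{b}(u,\R^N_+)$, I would use the kernel identity $\kappa_N/|s-t|=\int_{\R^{N-1}}(|z'|^2+(s-t)^2)^{-N/2}\,dz'$ (a special case of the computation in Lemma~\ref{estimate-h-t}) together with the triangle inequality $(u(x',s)-u(x',t))^2\le 2(u(x',s)-u(y',t))^2+2(u(y',t)-u(x',t))^2$.

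The first of the two resulting integrals is bounded by $\mathfrak{b}(u,\R^N_+)+C\|u\|_{L^2}^2$ after splitting $|x-y|\le 1$ from $|x-y|>1$, the tail being controlled via Schur-type estimates on $\int|x-y|^{-N}\mathbf{1}_{|x-y|>1,\,|s-t|\le 1}\,dy$. The second integral, after integrating out $s$, becomes a transverse Riesz-type form $\int(u(y',t)-u(x',t))^2|x'-y'|^{1-N}\mathbf{1}_{|x'-y'|<1}\,dx'\,dy'\,dt$, and this is the main obstacle of the proof, since it is a transverse $H^{1/2}$-like semi-norm not directly comparable to $\mathfrak{b}(u,\R^N_+)$. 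I would treat it by a second application of the triangle inequality, $(u(y',t)-u(x',t))^2\le 2(u(y',t)-u(y',t+|x'-y'|))^2+2(u(x',t)-u(y',t+|x'-y'|))^2$: upon polar integration in $x'$, the first piece collapses to a constant multiple of $B$ itself, while the second piece, whose pairs $((x',t),(y',t+|x'-y'|))$ lie on a cone with $|x-y|=\sqrt 2|x'-y'|\le\sqrt 2$, satisfies $|x-y|^{1-N}\le\sqrt 2\,|x-y|^{-N}$ along this cone and is therefore controlled by $\mathfrak{b}(u,\R^N_+)+C\|u\|_{L^2}^2$ (after enlarging the cutoff from $|x-y|\le 1$ to $|x-y|\le\sqrt 2$, the difference being absorbed into $\|u\|^2$). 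This yields a self-referential estimate of the form $B\le\alpha B + C(\mathfrak{b}(u,\R^N_+)+\|u\|_{L^2}^2)$; careful bookkeeping of the dimensional constants (in particular balancing $\kappa_N$ against $|S^{N-2}|$) is required to ensure $\alpha<1$ so that $B$ absorbs on the left, and this constant bookkeeping is where I would expect most of the technical work to lie.
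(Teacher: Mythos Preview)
Your reduction to the half-space via a partition of unity, bi-Lipschitz straightening, and a Lipschitz-multiplier lemma is exactly what the paper does (Lemmas~\ref{lipschitz-multiplicative} and~\ref{boundary-point-hardy}, followed by the covering argument). The divergence is entirely in how the half-space inequality
\[
\int_{\R^N_+}u^2(x)\log\frac{1}{x_N}\,dx \le C\bigl(\mathfrak b(u,\R^N_+)+\|u\|_{L^2}^2\bigr)
\]
is obtained. The paper does not slice; it invokes the one-dimensional Beckner inequality $\int_\R[\psi(\tfrac14)-\log(\pi|t|)]v^2\,dt\le\int_\R|\hat v|^2\log|\zeta|\,d\zeta$, extends it to $\R^N$ by Plancherel in the tangential variables (Proposition~\ref{hyperplane-hardy}), and then, for $u\in C_c^\infty(\R^N_+)$, applies this to the even reflection $v$ of $u$ across $\{x_N=0\}$. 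The point is that the right-hand side equals $\tfrac12\cE_L(v,v)$, whose nonlocal positive part $\cE(v,v)$ is controlled by $4\,\mathfrak b(u,\R^N_+)$ via the elementary kernel monotonicity $\k(x-\sigma(y))\le\k(x-y)$ for $x,y\in\R^N_+$. This avoids any comparison between slice-wise and full quadratic forms.

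Your slicing approach, by contrast, runs into a genuine obstacle at the step you flag. Two concrete issues:
\begin{itemize}
\item[(a)] In your ``cone'' estimate for the second piece of the second triangle inequality, you are integrating $(u(x',t)-u(y',t+|x'-y'|))^2|x'-y'|^{1-N}$ over $(x',y',t)\in\R^{N-1}\times\R^{N-1}\times\R_+$, a $(2N-1)$-dimensional set, and claiming a bound by $\mathfrak b(u,\R^N_+)$, a $2N$-dimensional integral. The pointwise comparison $|x-y|^{1-N}\le\sqrt2\,|x-y|^{-N}$ on the cone $\{s-t=|x'-y'|\}$ does not yield such a bound: the cone has measure zero in $\R^N_+\times\R^N_+$, so a thickening is required, and the thickening error reintroduces exactly the kind of term you are trying to absorb.
\item[(b)] Even granting (a), the absorption constant $\alpha$ you assemble is, after tracking the factors of $2$ from the two triangle inequalities, of the form $\alpha\sim 2\mu_N|S^{N-2}|/\kappa_N$ with $\mu_N=\int_\R(1+\tau^2)^{-N/2}d\tau$; for $N=2$ this gives $\alpha=4$, so the self-referential inequality $B\le\alpha B+\dots$ does not close.
\end{itemize}
Both difficulties disappear in the paper's Fourier/reflection argument, which is why that route is preferable here.
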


\begin{corollary}
\label{log-boundary-harnack-lip-bound-corollary}
Let $\Omega \subset \R^N$ be a bounded Lipschitz domain. Then there exists a constant $C=C(\Omega)>0$ with 
$$\int_{\Omega} \kappa_\Omega(x) u^2(x)  \,dx \le C\Bigl( {\mathfrak b}(u,\Omega) + \|u\|_{L^2(\Omega)}^2\Bigr)\qquad \text{for all $u \in C^\infty_c(\Omega)$,}    
$$
where $\kappa_\Omega$ denotes the Killing measure corresponding to the kernel $\k$ defined in (\ref{eq:def-kappa-omega}).
\end{corollary}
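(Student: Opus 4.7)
The plan is to derive Corollary \ref{log-boundary-harnack-lip-bound-corollary} directly from Proposition \ref{log-boundary-harnack-lip-bound} by establishing a pointwise comparison between the Killing measure $\kappa_\Omega$ and the boundary-distance weight $\log \frac{1}{\rho}$, after which the claim follows by substituting one into the other and paying a harmless term proportional to $\|u\|_{L^2(\Omega)}^2$.

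First I would record the pointwise estimate $\kappa_\Omega(x) \le 2\log\frac{1}{\rho(x)}$ already available from (\ref{eq:upper-estimate-kappa-omega}) for every $x \in \Omega$ with $\rho(x) < 1$. Next, observe that when $\rho(x) \ge 1$ the defining formula (\ref{eq:def-kappa-omega}) gives $\kappa_\Omega(x)=0$, since in that case $B_1(x) \subset \Omega$ and thus $B_1(x) \setminus \Omega$ is empty. Combining these two cases yields the clean pointwise bound
\[
\kappa_\Omega(x) \le 2\,\max\bigl\{\log\tfrac{1}{\rho(x)},\,0\bigr\} \qquad \text{for every } x\in\Omega.
\]

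Second, since $\Omega$ is bounded there exists a constant $M=M(\Omega) \ge 0$ with $\log\frac{1}{\rho(x)} \ge -M$ on $\Omega$, and hence $\max\{\log\frac{1}{\rho(x)},0\} \le \log\frac{1}{\rho(x)}+M$ pointwise on $\Omega$. Multiplying by $u^2$, integrating over $\Omega$, and invoking Proposition \ref{log-boundary-harnack-lip-bound} to control $\int_\Omega u^2 \log \frac{1}{\rho}\,dx$, I would obtain
\[
\int_\Omega \kappa_\Omega(x)\,u^2(x)\,dx \;\le\; 2\int_\Omega u^2(x) \log\tfrac{1}{\rho(x)}\,dx + 2M\|u\|_{L^2(\Omega)}^2 \;\le\; 2C\,\bigl({\mathfrak b}(u,\Omega) + \|u\|_{L^2(\Omega)}^2\bigr) + 2M\|u\|_{L^2(\Omega)}^2
\]
for every $u \in C^\infty_c(\Omega)$, which is the desired estimate after absorbing $2M$ into the constant.

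In this case there is no serious obstacle to surmount: the substantive boundary Hardy estimate has already been proved in Proposition \ref{log-boundary-harnack-lip-bound}, and the corollary is a purely pointwise replacement of the weight $\log\frac{1}{\rho}$ by $\kappa_\Omega$, together with the elementary observation that $\kappa_\Omega$ vanishes away from the boundary.
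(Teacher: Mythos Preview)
Your proof is correct and follows essentially the same approach as the paper: both derive the corollary from Proposition~\ref{log-boundary-harnack-lip-bound} via the pointwise bound $\kappa_\Omega(x) \le 2\log\frac{1}{\rho(x)} + C$ obtained from (\ref{eq:upper-estimate-kappa-omega}). Your treatment is in fact slightly more careful than the paper's one-line justification, since you explicitly handle the case $\rho(x)\ge 1$ where $\kappa_\Omega$ vanishes.
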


\begin{proof}
This simply follows from Proposition~\ref{log-boundary-harnack-lip-bound} by noting that, since $\Omega$ is a bounded Lipschitz domain, we have that 
$\kappa_\Omega(x) \le 2 \log \frac{1}{\rho(x)} +C$ for $x \in \Omega$ with a constant $C>0$ by (\ref{eq:upper-estimate-kappa-omega}).
\end{proof}

The remainder of this section is devoted to the proof of Proposition~\ref{log-boundary-harnack-lip-bound}. The starting point of the proof is the following variant of Beckner's logarithmic inequality~(\ref{eq:beckner-inequality}). 

\begin{proposition}
\label{hyperplane-hardy}
We have   
$$
\int_{\R^{N}} \Bigl[\psi(\frac{1}{4})- \log (\pi |x_N|)\Bigr]u^2(x) dx 
\le \int_{\R^N}\log |\xi|  \, |\widehat{u}(\xi)|^2 d\xi\qquad \text{for Schwarz functions $u \in \cS(\R^N)$.} 
$$
\end{proposition}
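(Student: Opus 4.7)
The plan is to reduce to the one-dimensional instance of Beckner's logarithmic inequality~(\ref{eq:beckner-inequality}), apply it slice-by-slice in the last coordinate, and then exploit Plancherel in the transverse variables together with the elementary pointwise bound $\log|\xi_N|\le \log|\xi|$.

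More concretely, I would proceed as follows. For each fixed $x'=(x_1,\dots,x_{N-1})\in \R^{N-1}$, the function $t\mapsto u(x',t)$ belongs to $\cS(\R)$, so the $N=1$ case of~(\ref{eq:beckner-inequality}) gives
$$
\int_{\R} \bigl[\psi(\tfrac14)-\log(\pi|x_N|)\bigr]\,u^2(x',x_N)\,dx_N \;\le\; \int_{\R} \log|\xi_N|\,|(\mathcal{F}_N u)(x',\xi_N)|^2\,d\xi_N,
$$
where $\mathcal{F}_N$ denotes the partial Fourier transform in the last variable. Integrating this inequality in $x'$ produces the left-hand side of the proposition on the left. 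For the resulting right-hand side, I would apply Plancherel in the $x'$-variables at each fixed $\xi_N$: since
$$
\widehat{u}(\xi',\xi_N)=\mathcal{F}_{x'}\bigl[(\mathcal{F}_N u)(\cdot,\xi_N)\bigr](\xi'),
$$
Plancherel yields $\int_{\R^{N-1}}|(\mathcal{F}_N u)(x',\xi_N)|^2\,dx' = \int_{\R^{N-1}}|\widehat{u}(\xi',\xi_N)|^2\,d\xi'$ for every $\xi_N$, and multiplying by $\log|\xi_N|$ and integrating in $\xi_N$ gives $\int_{\R^N}\log|\xi_N|\,|\widehat u(\xi)|^2\,d\xi$. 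Using $|\xi_N|\le|\xi|$ together with $|\widehat u|^2\ge 0$, this is bounded above by $\int_{\R^N}\log|\xi|\,|\widehat u(\xi)|^2\,d\xi$, which is exactly the desired right-hand side.

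The only delicate point is the use of Fubini to interchange the $x'$-integration both with the one-dimensional Beckner estimate and, later, with the $\xi_N$-integration. This is routine for $u\in\cS(\R^N)$, since $\log|x_N|$ and $\log|\xi_N|$ are locally integrable in one variable while $u$ and $\widehat u$ decay faster than any polynomial, so all iterated integrals are absolutely convergent. I do not foresee any genuine obstacle beyond this bookkeeping; the essential input is that Beckner's inequality is available in every dimension (in particular $N=1$), and the asymmetric form $\log|x_N|$ in the conclusion is precisely what the slice-by-slice argument naturally produces.
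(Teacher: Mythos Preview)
Your proposal is correct and follows essentially the same route as the paper: apply the one-dimensional Beckner inequality in the $x_N$-variable at each fixed $x'$, integrate over $x'$, use Plancherel in the transverse variables to pass from the partial Fourier transform to $\widehat u$, and finish with the pointwise bound $\log|\xi_N|\le\log|\xi|$. The paper's only cosmetic difference is notation ($x=(z,t)$, $\xi=(\tau,\zeta)$ instead of $(x',x_N)$, $(\xi',\xi_N)$).
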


\begin{proof}
We write $x=(z,t),\: \xi = (\tau, \zeta) \in \R^N$ with $z,\tau \in \R^{N-1}$, $t,\zeta \in \R$, and we recall the one-dimensional version of Beckner's inequality~(\ref{eq:beckner-inequality}):
\begin{equation}
  \label{eq:beckner-inequality-one-dim}
\int_{\R} \Bigl[\psi(\frac{1}{4})- \log (\pi |t|)\Bigr]u^2(t) dt \le \int_{\R} |\cF_1(u) (\zeta)|^2  \log |\zeta| d\zeta \qquad \text{for $u \in \cS(\R)$.} 
\end{equation}
Here $\cF_1$ denotes the one-dimensional Fourier transform. Next, let $u \in \cS(\R^N)$. Then (\ref{eq:beckner-inequality-one-dim}) implies that 
\begin{align*}
&\int_{\R^N} \Bigl[\psi(\frac{1}{4})- \log (\pi |x_N|)\Bigr]u^2(x) dt
= \int_{\R^{N-1}}\int_{\R} \Bigl[\psi(\frac{1}{4})- \log (\pi |t|)\Bigr]u^2(z,t) dt dz \\
&\le \int_{\R^{N-1}} \int_{\R}   |\cF_1(u(z,\cdot))(\zeta)|^2 \log |\zeta|d\zeta dz   = \int_{\R} \log |\zeta| \int_{\R^{N-1}} |\cF_1(u(z,\cdot))(\zeta)|^2 dz  d\zeta,\end{align*}
% $$
% \int_{\R} \Bigl[\psi(\frac{1}{4})- \log (\pi |t|)\Bigr]u^2(z,t) dt \le \int_{\R} |\cF_1(u(z,\cdot))(\zeta)|^2  \log |\zeta| d\zeta \qquad \text{for all $z \in \R^{N-1}$,}
% $$
where, by the Plancherel theorem in $\R^{N-1}$,  
$$
\int_{\R^{N-1}} |\cF_1(u(z,\cdot))(\zeta)|^2 dz = \int_{\R^{N-1}}|\hat u (\tau,\zeta)|^2d\tau \qquad \text{for $\zeta \in \R$.}
$$
% \int_{\R^{N-1}} |\cF_{N-1}(g_\zeta)(\tau)|^2 d \tau
% $$
% with the function $g_\zeta: \R^{N-1} \to \R,$ $g_\zeta(z)=\cF_1(u(z,\cdot))(\zeta)$.
% Since 
% \begin{align*}
% \cF_{N-1}(g_\zeta)(\tau) &= (2\pi)^{\frac{1-N}{2}} \int_{\R^{N-1}} e^{-i z \tau} g_\zeta(z)\,dz = (2\pi)^{-\frac{N}{2}} 
% \int_{\R^{N-1}} e^{-i z \tau} \int_{\R} e^{-i t \zeta} u(z,t)\,dt dz\\
% &= (2\pi)^{-\frac{N}{2}} \int_{\R^{N}} e^{-i (z,t) \cdot (\tau,\zeta)}u(z,t)d(z,t) = \cF(u)(\tau,\zeta).
% \end{align*}
% it follows that 
% $$
% \int_{\R^{N-1}} |\cF_1(u(z,\cdot)(\zeta)|^2 dz = \int_{\R^{N-1}} |\cF_{N-1}(g_\zeta)(\tau)|^2 d \tau = \int_{\R^{N-1}}|\cF(u)(\tau,\zeta)|^2d\tau
% $$
% and therefore 
Using the fact that $\log$ is an increasing function, we conclude that 
$$
\int_{\R^{N}} \Bigl[\psi(\frac{1}{4})- \log (\pi |x_N|)\Bigr]u^2(x) dx 
\le \int_{\R} \log |\zeta| \int_{\R^{N-1}}|\hat u (\tau,\zeta)|^2d\tau d\zeta
\le \int_{\R^N}\log |\xi|\, |\hat u(\xi)|^2 d \xi.
$$
\end{proof}

In the remainder of this section, the latter $C$ always stands for a positive constant, and the value of $C$ may change in every step.

\begin{corollary}\label{corol:logar-hardy-ineq}   
There exists a constant $C>0$ such that 
$$
\int_{\R^N_+} u^2(x) \log \frac{1}{x_N} \,dx \le C \Bigl ({\mathfrak b}(u,\R^N_+) + \|u\|_{L^2(\R^N_+)}^2
 \Bigr)\quad \text{for all $u \in C^\infty_c(\R^N_+)$.}   
$$
\end{corollary}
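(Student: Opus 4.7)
The plan is to apply Proposition~\ref{hyperplane-hardy} to $u$ extended by zero to $\R^N$, then use Plancherel and a sharp pointwise bound for the killing measure $\kappa_{\R^N_+}$ to close an absorption argument. I would first reduce by a Lipschitz cutoff $\eta(x_N)$ equal to $1$ on $(-\infty,1/2]$ and vanishing on $[1,\infty)$ to the case $\supp u\subset\{0<x_N\le 1\}$: the contribution of $u(1-\eta)$ and the cross term to the LHS are each at most $C\|u\|_{L^2}^2$ because $\log(1/x_N)\le\log 2$ on their supports, while the Lipschitz bound on $\eta$ yields ${\mathfrak b}(u\eta,\R^N_+)\le 2\,{\mathfrak b}(u,\R^N_+)+C\|u\|_{L^2}^2$.

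Under this support assumption, Proposition~\ref{hyperplane-hardy} applied to the zero extension gives
$$
\int u^2\log(1/x_N)\,dx\le \int_{\R^N}\log|\xi|\,|\hat u|^2\,d\xi+C_1\|u\|_{L^2}^2.
$$
By Theorem~\ref{teo-representation}(ii) and Plancherel, $\int\log|\xi|\,|\hat u|^2\,d\xi=\tfrac12\int\loglap u\cdot u\,dx$, and the symmetric integration-by-parts formula~\eqref{eq 1.2.2} combined with $u\equiv 0$ outside $\R^N_+$ gives
$$
\int\loglap u\cdot u\,dx={\mathfrak b}(u,\R^N_+)+\int u^2\kappa_{\R^N_+}\,dx-\int[\j\ast u]\,u\,dx+\rho_N\|u\|_{L^2}^2.
$$
The convolution term $-\int[\j\ast u]u\,dx=-\int\hat\j\,|\hat u|^2\,d\xi$ is bounded by $C_0\|u\|_{L^2}^2$: writing $\hat\j=m_1-2\log|\cdot|+\rho_N$ with $m_1\ge 0$, and noting that $m_1(\xi)-2\log|\xi|$ has a finite limit as $|\xi|\to\infty$ while $\hat\j\to+\infty$ at $\xi=0$, the continuous function $\hat\j$ on $\R^N\setminus\{0\}$ has a finite infimum $-C_0$.

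The decisive step is the sharp pointwise bound $\kappa_{\R^N_+}(x)\le\log(1/x_N)$ for $0<x_N<1$, with optimal constant $1$. I would prove this in polar coordinates: writing $\kappa_{\R^N_+}(x)=c_N\int_{|z|\le 1,\,z_N\ge x_N}|z|^{-N}\,dz$ and substituting $t=x_N/r$ gives $\kappa_{\R^N_+}(x)=c_N\int_{x_N}^{1}\tilde\mu(t)\,t^{-1}\,dt$ with $\tilde\mu(t)=\sigma\{\omega\in S^{N-1}:\omega_N\ge t\}\le |S^{N-1}|/2$, and the identity $c_N|S^{N-1}|=2$ then yields the claimed bound. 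On $\supp u\subset\{0<x_N\le 1\}$ this gives $\int u^2\kappa_{\R^N_+}\,dx\le \int u^2\log(1/x_N)\,dx$, so combining all of the above,
$$
\int u^2\log(1/x_N)\,dx\le \tfrac12\,{\mathfrak b}(u,\R^N_+)+\tfrac12\int u^2\log(1/x_N)\,dx+C'\|u\|_{L^2}^2,
$$
and absorbing $\tfrac12\int u^2\log(1/x_N)\,dx$ on the left yields the corollary. The main obstacle is the sharpness requirement: the argument closes only because the constant in the pointwise bound on $\kappa_{\R^N_+}$ is \emph{exactly} $1$; the looser bound $\kappa_\Omega(x)\le 2\log(1/\rho(x))$ from~\eqref{eq:upper-estimate-kappa-omega} would leave too large a factor to absorb. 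A minor technicality is establishing the lower bound on $\hat\j$, handled by the Fourier asymptotics above.
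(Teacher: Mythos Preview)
Your proof is correct but takes a genuinely different route from the paper's. The paper evenly reflects $u$ across the hyperplane $\{x_N=0\}$ to obtain $v\in C^\infty_c(\R^N)$ (this works since $u$ vanishes near the boundary), applies Proposition~\ref{hyperplane-hardy} to $v$, and then uses two simple observations: first, one may assume $u\ge 0$, so the term $-\int[\j*v]v\,dx$ is $\le 0$ and can simply be dropped; second, the kernel inequality $\k(x-\sigma(y))\le\k(x-y)$ for $x,y\in\R^N_+$ gives $\cE(v,v)\le 4\,{\mathfrak b}(u,\R^N_+)$ directly. No absorption is needed and no constants have to be tracked.

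Your argument instead uses the zero extension together with an absorption scheme. Its success rests on two ingredients the paper avoids: the sharp bound $\kappa_{\R^N_+}(x)\le \log(1/x_N)$ with constant exactly $1$ (your polar-coordinate computation is correct, and the identity $c_N|S^{N-1}|=2$ indeed produces the needed constant), and a lower bound on $\hat\j$ via its identification as $m_1-2\log|\cdot|+\rho_N$. Both steps are valid, though the Fourier analysis of $\hat\j$ could be bypassed by reducing to $u\ge 0$, after which $-\int[\j*u]u\,dx\le 0$ trivially. Your preliminary cutoff reduction (take $\eta$ smooth rather than merely Lipschitz so that $u\eta\in\cS$) is also unnecessary in the reflection approach. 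On the other hand, your argument treats sign-changing $u$ directly, and the sharp killing-measure estimate you isolate is a nice observation in its own right.
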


\begin{proof}
Without loss of generality, let $u \in C^\infty_c(\R^N_+)$ be nonnegative, and let $v$ be the even extension of $u$ on $\R^N$ with respect to the reflection $\sigma: \R^N \to \R^N$ at the hyperplane $\{x_N= 0\}$. By Proposition~\ref{hyperplane-hardy}, 
\begin{align*}
&2 \int_{\R^{N}} \Bigl[\psi(\frac{1}{4})- \log (\pi |x_N|)\Bigr]v^2(x) dx 
\le 2\int_{\R^N}\log |\xi| |\widehat{v}(\xi)|^2 d\xi = \cE_L(v,v)\\
&= \cE(v,v)-c_N \int_{\R^N}\int_{\R^N} v(x)v(y)\j(x-y)\,dxdy + \rho_N \|v\|_{L^2(\R^N)}^2. 
\end{align*}
We thus infer that    
$$
\int_{\R^{N}} v^2(x) \log \frac{1}{|x_N|} dx \le C \Big(\cE(v,v) + \|v\|_{L^2(\R^N)}\Bigr).  
$$
Moreover, 
\begin{align*}
\cE(v,v)&= {\mathfrak b}(v,\R^N_+) + {\mathfrak b}(v,\R^N_-) + \int_{\R^N_+}\int_{\R^N \setminus \R^N_+}(v(x)-v(y))^2\k(x-y)\,dxdy\\   
&= 2 {\mathfrak b}(u,\R^N_+) + \int_{\R^N_+}\int_{\R^N_+}(u(x)-u(y))^2\k(x-\sigma(y))\,dxdy \le 4 {\mathfrak b}(u,\R^N_+). %   \label{eq:third-est-cor-app}
\end{align*}
Here we used in the last step that $\k(x-\sigma(y))\le \k(x-y)$ for $x,y \in \R^N_+$. We thus conclude that 
\begin{align*}
\int_{\R^{N}_+} u^2(x) \log \frac{1}{|x_N|} dx =\frac{1}{2} \int_{\R^{N}} v^2(x) \log \frac{1}{|x_N|} dx 
&\le C \Big(\cE(v,v) + \|v\|_{L^2(\R^N)}\Bigr)\\
&\le C \Big({\mathfrak b}(u,\R^N_+)  + 
\|u\|_{L^2(\R^N_+)}\Bigr),  
\end{align*}
as claimed. 
\end{proof}
In the following, let $\Omega \subset \R^N$ be a bounded open set. 

\begin{lemma}
\label{lipschitz-multiplicative}  
Let $\phi \in C^{0,1}(\Omega)$. Then there exists a constant $C=C(\phi)>0$ with 
$$
{\mathfrak b}(\phi u, \Omega) \le C \Bigl( {\mathfrak b}(u,\Omega)+ \|u\|_{L^2(\Omega)}^2\Bigr) \qquad \text{for all $u \in C^\infty_c(\Omega)$.}
$$
\end{lemma}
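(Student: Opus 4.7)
The plan is to use the standard product/Leibniz splitting
\[
(\phi u)(x) - (\phi u)(y) = \phi(x)\bigl(u(x) - u(y)\bigr) + u(y)\bigl(\phi(x) - \phi(y)\bigr),
\]
square it via $(a+b)^2 \le 2a^2 + 2b^2$, and control the two resulting double integrals separately. Recall that $\k(x-y) = c_N 1_{B_1}(x-y)|x-y|^{-N}$, so
\[
{\mathfrak b}(\phi u,\Omega) \le \int\!\!\!\!\int_{\Omega\times\Omega} \phi(x)^2 (u(x)-u(y))^2 \k(x-y)\,dx\,dy + \int\!\!\!\!\int_{\Omega\times\Omega} u(y)^2 (\phi(x)-\phi(y))^2 \k(x-y)\,dx\,dy.
\]

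For the first term, I would bound $\phi(x)^2 \le \|\phi\|_{L^\infty(\Omega)}^2$ (which is finite because $\Omega$ is bounded and $\phi$ is Lipschitz hence continuous on $\overline{\Omega}$, so bounded on $\Omega$) to obtain a bound by $2\|\phi\|_{L^\infty(\Omega)}^2 \,{\mathfrak b}(u,\Omega)$.

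For the second term, I would use the Lipschitz bound $|\phi(x)-\phi(y)| \le L |x-y|$ with $L$ the Lipschitz constant of $\phi$, which yields
\[
(\phi(x)-\phi(y))^2 \k(x-y) \le c_N L^2 \,1_{B_1}(x-y) |x-y|^{2-N}.
\]
Since $N \ge 1$, the function $z \mapsto 1_{B_1}(z) |z|^{2-N}$ is integrable on $\R^N$, with
\[
\int_{B_1} |z|^{2-N}\,dz = \frac{|S^{N-1}|}{2}
\]
(interpreting this appropriately when $N=1$). Applying Fubini, the second term is therefore bounded by
\[
c_N L^2 \cdot \frac{|S^{N-1}|}{2} \, \|u\|_{L^2(\Omega)}^2.
\]
Adding the two estimates gives the claim with $C = C(\phi) = 2\|\phi\|_{L^\infty(\Omega)}^2 + c_N L^2 |S^{N-1}|/2$.

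There is essentially no obstacle here; the only point to watch is the integrability of $|z|^{2-N}$ near the origin, which is exactly why the cutoff $1_{B_1}$ built into $\k$ and the exponent $2-N > -N$ make the computation work. No cancellation or symmetrization is needed, so the proof is a direct two-line splitting plus Fubini.
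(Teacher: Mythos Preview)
Your proof is correct and follows essentially the same approach as the paper: the Leibniz splitting combined with $(a+b)^2\le 2a^2+2b^2$, the bound $\|\phi\|_{L^\infty}$ for the first term, and the Lipschitz estimate together with the integrability of $|z|^2\k(z)$ on $\R^N$ for the second. The only cosmetic difference is which variable carries $\phi$ and which carries $u$ in the splitting, which is immaterial.
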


\begin{proof}
Let $u \in C^\infty_c(\Omega)$. Since 
$$
\bigl( [\phi u](x)-[\phi u](y)\bigr)^2 \le 2 \Bigl( 
\|\phi\|_{C^{0,1}(\Omega)}u^2(x)(x-y)^2 + \|\phi\|_{L^\infty(\Omega)}^2 \bigl(u(x)-u(y)\bigr)^2\Bigr)
$$
for $x,y \in \Omega$, we have  
$$
{\mathfrak b}(\phi u, \Omega)\le 2 \Bigl( \|\phi\|_{C^{0,1}(\Omega)} 
\int_{\Omega} u^2(x) \int_{\R^N}  |z|^2 \k(z) dz dx +  \|\phi\|_{L^\infty(\Omega)}^2 {\mathfrak b}(u,\Omega)\Bigr)
\le C \Bigl( \|u\|^2_{L^2(\Omega)} +{\mathfrak b}(u,\Omega)\Bigr).
$$The proof is complete.
\end{proof}

\begin{lemma}
\label{boundary-point-hardy}
Let $\Omega \subset \R^N$ be a bounded Lipschitz domain.  Then there exist $r>0$ and $C>0$ such that for any $z \in \partial \Omega$,
$$
\int_{\Omega}\log \frac{1}{\rho(x)}\, \phi^2(x)\,dx \le C \Bigl( {\mathfrak b}(\phi,\Omega) + \|\phi\|_{L^2(\Omega)}^2\Bigr) \quad \text{for all $\phi \in C^\infty_c( B_r(z) \cap \Omega)$.}
$$
\end{lemma}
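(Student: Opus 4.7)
The plan is to reduce the local boundary estimate to the half-space logarithmic Hardy inequality in Corollary~\ref{corol:logar-hardy-ineq} by straightening $\partial\Omega$ near $z$ via a bi-Lipschitz graph map. Since $\Omega$ is a bounded Lipschitz domain, $\partial\Omega$ admits a finite atlas of boundary charts with a uniform Lipschitz constant $L$, so I may choose $r$ and $C$ uniformly in $z \in \partial\Omega$. After a rigid motion placing $z$ at the origin, there exist $r_0>0$ and a Lipschitz function $g:\R^{N-1}\to\R$ with $g(0)=0$ and Lipschitz constant $L$ such that $\Omega\cap B_{r_0}(0)=\{(x',x_N)\in B_{r_0}(0):x_N>g(x')\}$. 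Define the global bi-Lipschitz map $T:\R^N\to\R^N$ by $T(x',x_N)=(x',x_N-g(x'))$; then $|\det DT|\equiv 1$, and $T$ restricts to a bi-Lipschitz bijection from $\Omega_g:=\{x_N>g(x')\}$ onto $\R^N_+$, with constants $0<c_1\le C_1$ satisfying $c_1|a-b|\le |T(a)-T(b)|\le C_1|a-b|$. A standard geometric computation shows that $\rho(x)\le T(x)_N=x_N-g(x')\le \sqrt{1+L^2}\,\rho(x)$ for $x\in\Omega$ sufficiently close to $0$, since the closest point of $\partial\Omega$ to such an $x$ lies on the graph of $g$. Fix $r\in(0,r_0/2)$ small enough so that $2rC_1<1$.

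For $\phi\in C^\infty_c(B_r(0)\cap\Omega)$, set $\psi:=\phi\circ T^{-1}$ on $\R^N_+$; this is Lipschitz with compact support in $\R^N_+$. Approximating $\psi$ by a sequence in $C^\infty_c(\R^N_+)$ via mollification, Corollary~\ref{corol:logar-hardy-ineq} yields in the limit
\[
\int_{\R^N_+}\psi^2(y)\log\tfrac{1}{y_N}\,dy \le C\bigl({\mathfrak b}(\psi,\R^N_+)+\|\psi\|_{L^2(\R^N_+)}^2\bigr).
\]
Since $|\det DT|=1$, the substitution $y=T(x)$ gives $\|\psi\|_{L^2(\R^N_+)}^2=\|\phi\|_{L^2(\Omega)}^2$, and the comparison $T(x)_N\le\sqrt{1+L^2}\,\rho(x)$ yields
\[
\int_{\R^N_+}\psi^2\log\tfrac{1}{y_N}\,dy = \int_\Omega\phi^2(x)\log\tfrac{1}{T(x)_N}\,dx \ge \int_\Omega\phi^2(x)\log\tfrac{1}{\rho(x)}\,dx - \tfrac{1}{2}\log(1+L^2)\,\|\phi\|_{L^2(\Omega)}^2.
\]

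It remains to control ${\mathfrak b}(\psi,\R^N_+)$ in terms of ${\mathfrak b}(\phi,\Omega)$ and $\|\phi\|_{L^2}^2$. Pulling back through the bijection $T:\Omega_g\to\R^N_+$ and using $|T(x)-T(x')|\ge c_1|x-x'|$,
\[
{\mathfrak b}(\psi,\R^N_+) \le \tfrac{c_N c_1^{-N}}{2}\int_{\Omega_g}\int_{\Omega_g}\frac{(\phi(x)-\phi(x'))^2}{|x-x'|^N}\,\mathbf{1}_{|x-x'|\le 1/c_1}\,dx\,dx',
\]
where $\phi$ is extended by $0$ to $\Omega_g\setminus\Omega$. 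Since $\supp\phi\subset B_r(0)\cap\Omega=B_r(0)\cap\Omega_g$, the integrand is nonzero only when at least one of $x,x'$ lies in $\supp\phi$. Splitting $\Omega_g=(\Omega\cap\Omega_g)\cup(\Omega_g\setminus\Omega)$: pairs with one point in $\Omega_g\setminus\Omega\subset\R^N\setminus B_{r_0}(0)$ have separation at least $r_0-r>0$, so their contribution is bounded by $C\|\phi\|_{L^2}^2$; the remaining piece on $(\Omega\cap\Omega_g)^2$ is split by $\mathbf{1}_{|x-x'|\le 1}+\mathbf{1}_{1<|x-x'|\le 1/c_1}$, the first part being at most $2\,{\mathfrak b}(\phi,\Omega)/c_N$ and the second part being controlled by $C\|\phi\|_{L^2}^2$ since the kernel is bounded there. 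Combining all estimates gives ${\mathfrak b}(\psi,\R^N_+)\le C\bigl({\mathfrak b}(\phi,\Omega)+\|\phi\|_{L^2}^2\bigr)$ and hence the lemma.

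The main obstacle is carrying out the nonlocal change of variables rigorously: the cutoff $|y-y'|\le 1$ on the half-space side does not correspond exactly to $|x-x'|\le 1$ under $T$, and the support of $\phi$ may reach close to $\partial B_r(z)$, so one must check that the resulting localization-boundary mismatches are absorbed into the lower-order $L^2$ term and that the bi-Lipschitz comparability of $T(x)_N$ with $\rho(x)$ provides the correct log-weight on the pullback.
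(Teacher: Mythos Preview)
Your proposal is correct and follows essentially the same approach as the paper: both straighten the boundary near $z$ via a bi-Lipschitz map, compare $\rho(x)$ with the last coordinate $(Tx)_N$, apply the half-space logarithmic Hardy inequality of Corollary~\ref{corol:logar-hardy-ineq} to $\psi=\phi\circ T^{-1}$, and then control ${\mathfrak b}(\psi,\cdot)$ in terms of ${\mathfrak b}(\phi,\Omega)+\|\phi\|_{L^2}^2$ using the bi-Lipschitz distortion of the kernel. The only cosmetic differences are that you use the explicit graph-flattening map $T(x',x_N)=(x',x_N-g(x'))$ (with Jacobian~$1$) onto the full half-space $\R^N_+$, whereas the paper maps onto the half-ball $Q_+=B_1\cap\R^N_+$ via a general bilipschitz chart and therefore inserts the intermediate step ${\mathfrak b}(\psi,\R^N_+)\le C({\mathfrak b}(\psi,Q_+)+\|\psi\|_{L^2}^2)$ before changing variables; your direct treatment of the mismatch $\Omega_g\setminus\Omega$ and of the cutoff range $1<|x-x'|\le 1/c_1$ plays the same role.
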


\begin{proof}  Let
$$
Q:= \{ x \in \R^N\::\: |x| <1\},\qquad Q_+:= Q \cap \R^N_+ \quad \text{and} \quad Q_0:= \{x \in Q\::\: x_N=0\}.
$$
Since $\Omega \subset \R^N$ is a bounded Lipschitz domain, there exists  $r>0$ such that  for fixed $z \in \partial \Omega$ there exists a bilipschitz map $T: \R^N\to \R^N$ with 
$$T(z)=0,\ \  T(B_{2r}(z)\cap \partial\Omega)=  Q_0  \ \ {\rm and}\ \   
T(B_{2r}(z)\cap \Omega)= Q_+. 
 $$
For simplicity, let $B:= B_r(z)$ and $\tilde B:=  B_{2r}(z)$ in the following. Since $\rho(x)= \dist(x, \partial \Omega)= \dist(x, \partial \Omega \cap B)$ for $x \in \tilde B$, we have
\begin{equation}
  \label{eq:boundary-point-hardy-1}
(Tx)_N \le C \rho(x) \quad \text{and therefore}\quad \log \frac{1}{\rho(x)} \le \log \frac{1}{(T x)_N} + \log C \qquad \text{for $x \in \tilde B \cap \Omega$.}
\end{equation}
Let $\phi \in C^\infty_c(B \cap \Omega)$, and put $\psi:= \phi \circ T^{-1} \in C^{1,0}_c(Q_+)$. Then (\ref{eq:boundary-point-hardy-1}) and Corollary~\ref{corol:logar-hardy-ineq} imply that 
\begin{equation}
  \label{eq:boundary-point-hardy-2}  
\int_{\Omega}\log \frac{1}{\rho(x)} \phi^2(x)\,dx \le 
C \int_{Q_+} \Bigl(\log \frac{1}{y_N}+1\Bigr) \psi^2(y)\,dy \le 
C \Bigl ( \|\psi\|_{L^2(Q_+)}^2 +  
{\mathfrak b}(\psi,\R^N_+) \Bigr). 
\end{equation}
Moreover, since $\supp \psi \subset T^{-1}(\tilde B) \subset \subset Q_+$, we have 
\begin{equation}
  \label{eq:boundary-point-hardy-3}  
{\mathfrak b}(\psi,\R^N_+) \le C \Bigl( {\mathfrak b}(\psi,Q_+) + \|\psi\|_{L^2(Q_+)}^2\Bigr).
\end{equation}
Next we note that $\k (x-y) \le  C \bigl( \k(T^{-1}x-T^{-1}y) +1\bigr)$ for $x,y \in Q_+$ and therefore 
\begin{align}
{\mathfrak b}(\psi,Q_+) =\int_{Q_+}\int_{Q_+}(\psi(x)-\psi(y))^2 \k (x-y)dxdy &\le C \int_{\Omega} \int_{\Omega} (\phi(x)-\phi(y))^2 \Bigl( 
\k (x-y) +1\Bigr)dx dy \nonumber \\
&\le C \Bigl( {\mathfrak b}(\phi,\Omega) + \|\phi\|_{L^2(\Omega)}^2\Bigr).  \label{eq:boundary-point-hardy-4}  
\end{align}
The claim now follows by combining (\ref{eq:boundary-point-hardy-2}), (\ref{eq:boundary-point-hardy-3}) and (\ref{eq:boundary-point-hardy-4}).
\end{proof}

We may now complete the 

\begin{proof}[Proof of Proposition~\ref{log-boundary-harnack-lip-bound}] 
Let $r>0$ be chosen as in Lemma~\ref{boundary-point-hardy}. Since $\partial \Omega$ is compact, there exist points $x_1, \dots, x_n$ such that $\partial \Omega  \subset \bigcup \limits_{j=1}^n U_j$ with $U_j:= \tilde B_r(x_j)$. Setting $U_0:= \Omega$ gives rise to an open covering $\overline \Omega  \subset \bigcup \limits_{j=0}^n U_j$. We consider a subordinated partition of unity given by functions $\phi_0,\dots,\phi_n \in C_c^\infty(\R^N)$, i.e., 
$$
0 \le \phi_j \le 1, \quad \supp \phi_j \subset U_j \qquad \text{for $j=0,\dots,n$}\qquad \text{and}\qquad \text{$\sum \limits_{j=0}^n \phi_j  \equiv 1$ on $\overline \Omega$.}
$$
Now let $u \in C_c^\infty(\Omega)$. By Lemma~\ref{lipschitz-multiplicative} and Lemma~\ref{boundary-point-hardy}, 
$$
\int_{\Omega}\log \frac{1}{\rho(x)} (u \phi_j)^2(x)\,dx \le C \Bigl( {\mathfrak b}(u \phi_j,\Omega) + \|u \phi_j \|_{L^2(\Omega)}^2\Bigr)\le C \Bigl( {\mathfrak b}(u,\Omega) + \|u \|_{L^2(\Omega)}^2\Bigr)\quad \text{for $j= 1,\dots,n$.}
$$
Moreover, since $\supp \phi_0 \subset \subset \Omega$,  we have that 
$$
\int_{\Omega}\log \frac{1}{\rho(x)} (u \phi_0)^2(x)\,dx \le 
C \|u \phi_0 \|_{L^2(\Omega)}^2 \le C \|u\|_{L^2(\Omega)}^2.
$$
Combining these inequalities, we conclude that 
$$
\int_{\Omega}\log \frac{1}{\rho(x)}u^2(x)\,dx \le C 
\sum_{j=0}^n \int_{\Omega}\log \frac{1}{\rho(x)}(\phi_j u)^2(x)\,dx \le C \Bigl( {\mathfrak b}(u,\Omega) + \|u \|_{L^2(\Omega)}^2\Bigr), 
$$
as claimed.
\end{proof}

\noindent{\bf Acknowledgements:} We first wish to thank the referee for his/her helpful comments and suggestions. The second author also wishes to thank Mouhamed Moustapha Fall for valuable discussions. Part of this work was done while the first author was visiting the Goethe-Universit\"{a}t and he would like to
thank the mathematics institute for its hospitality.  H. Chen  is supported by NNSF of China, No: 11726614, 11661045,
by the Jiangxi Provincial Natural Science Foundation, No: 20161ACB20007 and by the Alexander von Humboldt Foundation.    T. Weth is supported by DAAD and BMBF (Germany) within the project 57385104.

\end{document}